\newcommand{\udots}{\rotatebox{85}{$\ddots$}}
\author[A. Conner, F. Gesmundo, J. M. Landsberg, E. Ventura, and Y. Wang]{Austin Conner, Fulvio Gesmundo, Joseph M. Landsberg,\\ Emanuele Ventura, Yao Wang}
\address[A. Conner, J. M. Landsberg, Y. Wang]{Department of Mathematics, Texas A\&M University, College Station, TX 77843-3368, USA}
\email[A. Conner]{connerad@math.tamu.edu}
\email[J. M. Landsberg]{jml@math.tamu.edu}
\email[Y. Wang]{wangyao@math.tamu.edu}
\address[F. Gesmundo]{QMATH, Dept. of Mathematical Sciences, U. of Copenhagen, Universitetsparken 5, 2100 Copenhagen O., Denmark}
\email{fulges@math.ku.dk}
\address[E. Ventura]{Mathematisches Institut, Universit\"at Bern, Sidlerstrasse 5, 3012 Bern, Switzerland}
\email[E. Ventura]{emanuele.ventura@math.unibe.ch}
\title[Geometry of the Asymptotic Rank Conjecture]{Towards a Geometric Approach to Strassen's Asymptotic Rank Conjecture}
\thanks{Landsberg is supported by NSF grants DMS-1405348 and AF-1814254 and by 
the grant 346300 for IMPAN from the Simons Foundation and the matching 2015-2019 
Polish MNiSW fund as well as a Simons Visiting Professor grant supplied by the 
Simons Foundation and by the Mathematisches Forschungsinstitut Oberwolfach. 
Gesmundo acknowledges financial support from the European Research Council (ERC 
Grant Agreement no. 337603), the VILLUM FONDEN via the QMATH Centre of 
Excellence (Grant no. 10059) and the ICERM program in Nonlinear Algebra in Fall 
2018 (NSF DMS-1439786). Ventura acknowledges financial support by the grant 
346300 for IMPAN from the Simons Foundation and the matching 2015-2019 Polish 
MNiSW fund.}
\keywords{Tensor rank, Asymptotic rank, Matrix multiplication complexity, Slice 
rank}
\subjclass[2010]{15A69; 14L35, 68Q15}
\newcommand{\tincompr}{\operatorname{incompr}} 
\renewcommand{\th}{\theta}
\newcommand{\slrk}{\operatorname{slrk}}
\newcommand{\ot}{\otimes}
\newcommand{\op}{\oplus}
\newcommand{\BC}{\mathbb{C}}
\newcommand{\BP}{\mathbb{P}}
\newcommand{\vvirg}{, \dots ,}
\newcommand{\Hom}{\mathrm{Hom}}
\newcommand{\aaa}{\mathbf{a}}
\newcommand{\bbb}{\mathbf{b}}
\newcommand{\ccc}{\mathbf{c}}
\newcommand{\nnn}{\mathbf{n}}
\newcommand{\bbC}{\mathbb{C}}
\newcommand{\bbN}{\mathbb{N}}
\newcommand{\bbP}{\mathbb{P}}
\newcommand{\bbR}{\mathbb{R}}
\newcommand{\bbZ}{\mathbb{Z}}
\newcommand{\bfa}{\mathbf{a}}
\newcommand{\bfb}{\mathbf{b}}
\newcommand{\bfc}{\mathbf{c}}
\newcommand{\bfn}{\mathbf{n}}
\newcommand{\calA}{\mathcal{A}}
\newcommand{\cF}{\mathcal{F}}
\newcommand{\calS}{\mathcal{S}}
\newcommand{\cS}{\mathcal{S}}
\newcommand{\calT}{\mathcal{T}}
\newcommand{\aR}{\uwave{\mathbf{R}}}
\newcommand{\bfR}{\mathbf{R}}
\newcommand{\ur}{\underline{\mathbf{R}}}
\newcommand{\uR}{\underline{\mathbf{R}}}
\newcommand{\aQ}{\uwave{\mathbf{Q}}}
\newcommand{\uQ}{\underline{\mathbf{Q}}}
\newcommand{\Id}{\mathrm{Id}}
\newcommand{\Mone}{M_{\langle 1 \rangle}}
\newcommand{\Mn}{M_{\langle \bfn \rangle}}
\newcommand{\Mamu}[1]{M_{\langle #1 \rangle}}
\newcommand{\fraksl}{\mathfrak{sl}}
\newcommand{\frakS}{\mathfrak{S}}
\newcommand{\frakgl}{\mathfrak{gl}}
\newcommand{\frakg}{\mathfrak{g}}
\newcommand{\frakz}{\mathfrak{z}}
\newcommand{\frakJ}{\mathfrak{J}}
\newcommand{\Tight}{\mathit{Tight}}
\newcommand{\MaMu}{\mathit{MaMu}}
\newcommand{\Oblique}{\mathit{Oblique}}
\newcommand{\Free}{\mathit{Free}}
\renewcommand{\bar}[1]{\overline{#1}}
\renewcommand{\hat}[1]{\widehat{#1}}
\renewcommand{\emptyset}{\font\cmsy = cmsy11 at 11pt
 \hbox{\cmsy \char 59}
}
\newcommand{\dotitem}{\item[$\cdot$]}
\newcommand{\trace}{\mathrm{trace}}
\newtheorem{theorem}{Theorem}[section]
\newtheorem{proposition}[theorem]{Proposition}
\newtheorem{lemma}[theorem]{Lemma}
\newtheorem{conjecture}[theorem]{Conjecture}
\theoremstyle{definition}
\newtheorem{definition}[theorem]{Definition}
\newtheorem{example}[theorem]{Example}
\newtheorem{problem}[theorem]{Problem}
\newtheorem{question}[theorem]{Question}
\theoremstyle{remark}
\newtheorem{remark}[theorem]{Remark}
\newcommand{\Mat}{\mathrm{Mat}}
\newcommand{\supp}{\mathrm{supp}}
\begin{document}

\begin{abstract}
We make a first geometric study of three varieties in $\BC^m\ot \BC^m\ot \BC^m$ (for each $m$), including the Zariski closure of the set of {\it tight  tensors}, the tensors with continuous regular symmetry.  Our motivation is to develop a geometric framework for Strassen's {\it asymptotic rank conjecture} that the asymptotic rank of any tight tensor is minimal. In particular, we determine the dimension of the set of tight tensors. We prove that this dimension equals the dimension of the set of {\it oblique tensors}, a less restrictive class introduced by Strassen. 
\end{abstract}

\maketitle

\section{Introduction}\label{intro}

Strassen's asymptotic rank conjecture (Conjecture \ref{strconj} below) is a generalization of the famous conjecture that the exponent of matrix multiplication is two. An even further generalization of it is posed as a question in \cite[Problem 15.5]{BCS}. The object of Strassen's conjecture is the class of \emph{tight tensors}, originally introduced because of combinatorial properties that make tight tensors useful for Strassen's laser method for proving upper bounds on the exponent of matrix multiplication. Strassen proved remarkable properties about this class of tensors that led to the conjecture. 

The purpose of this paper is to place Strassen's asymptotic rank conjecture, together with its generalizations to less restrictive classes of tensors, in a geometric framework as a first step to comparing them and developing approaches for attacking them with geometric methods.
 
We make a first geometric study of algebraic varieties defined by three classes of tensors, each characterized by combinatorial properties. These classes arise in algebraic complexity theory \cite{Strassen:AlgebraComplexity}, quantum information theory \cite{2017arXiv170907851C}, and geometric invariant theory (more precisely, the study of moment polytopes \cite{MR932055,MR765581,MR1923785}). We pose questions intermediate to the asymptotic rank conjecture and  Problem 15.5 in \cite{BCS} for these classes of tensors. We compare these varieties with the well-studied orbit closure of the matrix multiplication tensor and the ambient space.

\subsection{Definitions and Notation}
Throughout the paper, $A,B,C$ denote complex vector spaces respectively  of dimension $\bfa,\bfb,\bfc$. Given two tensors $T_1 \in A_1 \otimes B_1 \otimes C_1$ and $T_2 \in A_2 \otimes B_2 \otimes C_2$, one can regard the tensor $T_1 \otimes T_2$ as an element of $(A_1 \otimes A_2) \otimes (B_1 \otimes B_2) \otimes (C_1 \otimes C_2)$. This is called \emph{Kronecker product} of $T_1$ and $T_2$ and it is denoted by $T_1 \boxtimes T_2$. Kronecker powers are defined iteratively: for $T \in A \otimes B \otimes C$, let $T^{\boxtimes N} = T^{\boxtimes N-1} \boxtimes T$, which is a tensor in $(A^{\otimes N}) \otimes (B^{\otimes N}) \otimes (C^{\otimes N})$. The direct sum of $T_1$ and $T_2$ is the tensor $T_1 \oplus T_2 \in (A_1 \oplus A_2) \otimes (B_1 \oplus B_2) \otimes (C_1 \oplus C_2)$.

A tensor $T \in A \otimes B \otimes C$ is {\it concise} if the three linear maps $T_A : A^* \to B \otimes C$, $T_B: B^* \to A \otimes C$, and $T_C : C^* \to A \otimes B$ are injective. Kronecker products of concise tensors are concise, and, in particular, if $T$ is concise then $T^{\boxtimes N}$ is concise as well. 

The \emph{rank} of $T \in A\ot B\ot C$, denoted $\bfR(T)$, is the smallest  integer  $r$ such that $T=\sum_{j=1}^r u_j\ot v_j\ot w_j$ with $u_j\in A$, $v_j\in B$, $w_j\in C$. The {\it border rank} of $T$,  denoted  $\uR(T)$, is the smallest $r$ such that $T$ may be expressed as a limit (in the Euclidean topology) of tensors of rank $r$. The \emph{asymptotic rank} of $T$ is $\aR(T) = \lim_{N \to \infty} \bfR(T^{\boxtimes N})^{1/N} =  \lim_{N \to \infty} \uR(T^{\boxtimes N})^{1/N}$. In \cite{MR882307}, these limits are shown to exist and to be equal.

For every tensor $T \in A \otimes B \otimes C$, we have $\bfR(T) \geq \uR(T) \geq \aR(T)$; if $T$ is concise then $\aR(T) \geq \max\{\bfa,\bfb,\bfc\}$. When equality holds we say $T$ has {\it minimal} asymptotic rank. Moreover, $\bfR(T_1 \boxtimes T_2) \leq \bfR(T_1) \bfR(T_2)$, and similarly for border rank and asymptotic rank.

Border rank and asymptotic rank are lower semicontinuous under degeneration. More precisely, let $G := GL(A) \times GL(B) \times GL(C)$ and let $T,T' \in A \otimes B \otimes C$. We say that $T'$ is a {\it degeneration}  of $T$ if $T' \in \bar{G\cdot T}$, where $\bar{G\cdot T}$ denotes the orbit closure (equivalently in Zariski or Euclidean topology) of the tensor $T$ under the natural action of $G$. One has  $\uR(T') \leq \uR(T)$ and $\aR(T') \leq \aR(T)$.

Let $\Mone$ denote any rank one tensor. After possibly re-embedding $A,B,C$ in larger dimensional spaces, the border rank of a tensor $T \in A \otimes B \otimes C$ is characterized as the smallest $r$ such that $\Mone^{\oplus r}$ degenerates to $T$.

Given $m \in \bbN$, let $[m] := \{ 1 \vvirg m  \}$. For a subset $\calS \subseteq [\bfa] \times [\bfb] \times [\bfc]$, let $|\calS|$ denote its cardinality. Given a tensor $T = \sum_{ijk} T^{ijk} a_i \otimes b_j \otimes c_k$ with $\{ a_i \}$ a basis of $A$ and similarly for $\{b_j\}$ and $\{c_k\}$, the \emph{support} of $T$ in this basis is the set $\supp(T) = \{ (i,j,k) : T^{ijk} \neq 0\} \subseteq [\bfa]\times [\bfb]\times [\bfc]$. We say that a set $\calS \subseteq [\bfa] \times [\bfb] \times [\bfc]$ is concise if the restrictions of the three projections on $[\bfa]$, $[\bfb]$ and $[\bfc]$ to 
$\calS$  are surjective. Generic tensors with concise support are concise.

From a geometric perspective, tightness is a property concerning the stabilizer of $T$ under the action of $G$: a tensor is \emph{tight} if the stabilizer of $T$ in $G$ contains a regular semisimple element. The computer science literature (see, e.g., \cite{BCS,BlaserNotes}) generally works with an equivalent combinatorial definition in terms of the support of $T$ in a preferred basis. We refer to \cite{Strassen:AlgebraComplexity} and Section \ref{subsec: tight tensors} for details on the geometric definition and the 
proof of the equivalence between the two definitions. The combinatorial point of view naturally offers two generalizations,  which already appeared in \cite{MR882307}.

\begin{definition}\label{defin: tight oblique and free}
 A concise subset $\calS \subseteq [\bfa] \times [\bfb]\times [\bfc]$ is called
\begin{itemize}
 \dotitem \emph{tight} if there exist injective functions $\tau_A : [\bfa] \to \bbZ$, $\tau_B : [\bfb] \to \bbZ$ and $\tau_C : [\bfc] \to \bbZ$ such that $\tau_A(i) + \tau_B(j) + \tau_C(k)=0$ for every $(i,j,k) \in \calS$.

 \dotitem \emph{oblique} if no two elements of $\calS$  are comparable  under the partial ordering on $[\bfa] \times [\bfb] \times [\bfc]$ induced by total orders on $[\bfa]$,$[\bfb]$,$[\bfc]$ (one says $\calS$ is an {\it antichain} in the partially ordered set $[\bfa]\times [\bfb]\times [\bfc]$);
 
 \dotitem \emph{free} if any two $(i_1,j_1,k_1),(i_2,j_2,k_2) \in \calS$ differ in at least two entries.
 \end{itemize}
A tensor $T \in A \otimes B \otimes C$ is \emph{tight} (resp. \emph{oblique}, resp. \emph{free}) if there exists a choice of bases $\{ a_i\}_{i \in [\bfa]}, \{ b_j\}_{j \in [\bfb]},\{ c_i\}_{k \in [\bfc]}$ such that the support $\calS \subseteq [\bfa] \times [\bfb] \times [\bfc]$ of $T$ in the given bases is a tight (resp. oblique, resp. free) subset. 
In this case, the chosen basis is called a tight (resp. oblique, resp. free) basis.
\end{definition}

Every tight tensor is oblique and   every oblique tensor is free, see Remarks \ref{rmk: tight is oblique} and \ref{rmk: oblique is free}. We are unaware of geometric definitions of obliqueness and freeness.

\begin{problem}
 Find geometric characterizations for obliqueness and freeness.
\end{problem}

\subsection{A hierarchy of Conjectures on the asymptotic rank}
The matrix multiplication tensor $\Mn \in \Mat_{\bfn}^* \otimes \Mat_{\bfn}^* \otimes \Mat_{\bfn}$ is the bilinear map sending two matrices of size $\bfn \times \bfn$ to their product. This notation is consistent with the one used for rank one tensors: indeed, $\Mone \in \bbC^1 \otimes \bbC^1 \otimes \bbC^1$ may be regarded as the scalar multiplication bilinear map. Matrix multiplication has the self-reproducing property $\Mn^{\boxtimes N} = \Mamu{\bfn^N}$. Moreover, $\Mamu{\bfn}$ is tight. The famous conjecture that the exponent of matrix multiplication is two may be phrased in terms of the asymptotic rank:

\begin{conjecture}\label{conj: mamu} 
For some (and as a consequence all) $\nnn>1$, $\aR(\Mn)=\bfn^2$, i.e., $\Mn$ has minimal asymptotic rank.
\end{conjecture}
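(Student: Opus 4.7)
The plan is to attack this via the geometric framework of tight tensors the paper is developing, since $\Mn$ is itself tight, as noted just above the statement. The lower bound $\aR(\Mn) \ge \bfn^2$ is already supplied by the conciseness observation recorded earlier in the excerpt, so only the upper bound requires work. The ``some implies all'' clause follows at once from the self-reproducing identity $\Mn^{\boxtimes N} = \Mamu{\bfn^N}$ together with $\aR(T^{\boxtimes N}) = \aR(T)^N$, which is immediate from the definition of asymptotic rank as a limit of $N$-th roots: once $\aR(\Mamu{m}) = m^2$ is established for one value $m>1$, the exponent $\omega$ of matrix multiplication is forced to be $2$, and hence $\aR(\Mn) = \bfn^2$ for every $\nnn$.

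For the substantive upper bound, the natural strategy within the framework of the paper is: (i) identify a concise tight tensor $T$ whose asymptotic rank is small relative to its format, ideally minimal; (ii) exhibit a degeneration or restriction from some Kronecker power $T^{\boxtimes N}$ onto a direct sum of many disjoint copies of $\Mamu{m}$, with the number of copies growing like $\aR(T)^N/m^{2N}$; and (iii) invoke the semicontinuity of asymptotic rank under degeneration (the inequalities $\uR(T')\le\uR(T)$ and $\aR(T')\le\aR(T)$ stated in the excerpt) to transfer the bound back to $\Mamu{m}$, and thence to $\Mn$. This is, of course, the geometric recasting of the Strassen--Coppersmith--Winograd laser method, with the variety of tight tensors constructed in this paper playing the role of the search space.

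The main obstacle is blunt: this is precisely the $\omega = 2$ problem, the central open question in algebraic complexity theory. The deepest specific difficulty inside the proposed program is that the known constructions of tight tensors either demonstrably fail to have minimal asymptotic rank, or, when minimality is plausible, we cannot degenerate large Kronecker powers to a sufficient number of disjoint matrix multiplications. The dimension counts supplied by this paper for the varieties of tight and oblique tensors are designed to delineate the geometric arena in which a tight tensor with minimal asymptotic rank would have to live, rather than to exhibit one. My plan therefore reduces honestly to proving Strassen's asymptotic rank conjecture (Conjecture \ref{strconj}) for the class of tight tensors, since that stronger statement subsumes Conjecture \ref{conj: mamu} immediately by the tightness of $\Mn$.
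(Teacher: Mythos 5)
This statement is a \emph{conjecture}, not a theorem: the paper does not prove it and does not claim to. It is exactly the assertion that the exponent of matrix multiplication $\omega$ equals $2$, which is the central open problem of algebraic complexity theory. Your proposal correctly recognizes this, and the parts you do establish are sound: the lower bound $\aR(\Mn)\geq\bfn^2$ follows from conciseness as you say, and the ``some implies all'' clause follows from the self-reproducing identity $\Mn^{\boxtimes N}=\Mamu{\bfn^N}$ and the multiplicativity $\aR(T^{\boxtimes N})=\aR(T)^N$, combined with Strassen's classical result that $\aR(\Mamu{\bfn})=\bfn^{\omega}$ for every $\bfn$ (you invoke this last fact implicitly when you pass from one value of $m$ to the exponent $\omega$; it is standard but worth flagging as the ingredient doing the work beyond what is literally written in the excerpt). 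Your honest conclusion---that the substantive content reduces to an open problem, and that the varieties of tight and oblique tensors this paper studies are meant to delimit the geometric search space rather than to exhibit the needed tensor---matches the paper's own framing, which states the claim as Conjecture~\ref{conj: mamu}, places it at the base of a hierarchy below Conjecture~\ref{strconj} and Questions~\ref{conj: oblique}--\ref{bcsconj}, and offers no proof. There is no gap to point out because you did not claim to close one.
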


Strassen's asymptotic rank conjecture generalizes Conjecture \ref{conj: mamu} to the class of tight tensors. It was stated in a more general form in \cite{Strassen:AlgebraComplexity} (see also Conjecture \ref{conj: strassen functionals}), but it appeared implicitly already in \cite{Str:AsySpectrumTensors,MR1089800}.

\begin{conjecture}[Strassen's Asymptotic Rank Conjecture, \cite{Strassen:AlgebraComplexity}]\label{strconj}
 Let $T \in \BC^m\ot \BC^m\ot \BC^m$ be tight and concise. Then $\aR(T) = m$, i.e., all concise tight tensors have minimal asymptotic rank.
\end{conjecture}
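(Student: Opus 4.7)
The plan is to exploit the geometric content of tightness. By definition the stabilizer of $T$ in $G$ contains a regular semisimple element, hence a three-dimensional torus $H\subset G$ fixing $T$; its weight decomposition of $A\otimes B\otimes C$ recovers a tight basis together with integer functions $\tau_A,\tau_B,\tau_C$ on $[m]$ satisfying $\tau_A(i)+\tau_B(j)+\tau_C(k)=0$ for every $(i,j,k)\in\supp(T)$. Since conciseness already forces $\aR(T)\geq m$, the content of the conjecture is the reverse inequality $\aR(T)\leq m$, which amounts to constructing, for every $\epsilon>0$, degenerations of $T^{\boxtimes N}$ to $\Mone^{\oplus r_N}$ with $r_N\leq (m+\epsilon)^N$ for all sufficiently large $N$.

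The central construction I would attempt proceeds in three steps. First, pass to $T^{\boxtimes N}$, on which $H^{\times N}$ acts trivially with weight functions the coordinate sums $\sum_\ell \tau_A(i_\ell)$ and the analogous expressions on the $B$ and $C$ factors. Second, for each cocharacter $\lambda:\bbC^*\to H^{\times N}$ form the limit $\lim_{t\to 0}\lambda(t)\cdot T^{\boxtimes N}$; this is a torically degenerated tight tensor supported on the $\lambda$-minimizing subset of $\supp(T)^N$, and its border rank bounds $\uR(T^{\boxtimes N})$ from above. Third, partition $\supp(T)^N$ into type classes indexed by the marginal distributions on the three factors and argue, in the spirit of Strassen's laser method (as refined in \cite{2017arXiv170907851C}), that for a near-optimal distribution the corresponding block degenerates efficiently to a direct sum of unit tensors of total rank close to $m^N$, with the tight relation $\tau_A+\tau_B+\tau_C=0$ providing the balance that makes the combinatorial count tight.

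The main obstacle — and the reason Strassen's conjecture remains open — is that the a priori upper bounds on $\aR(T)$ coming from Strassen's support functionals, or from the quantum functionals of \cite{2017arXiv170907851C}, in general strictly exceed $m$, and no known combinatorial refinement certifies the exact value $m$ for an arbitrary tight support. Closing this gap would seem to require a genuinely geometric invariant — perhaps extracted from the moment polytope of $T$, from Hilbert--Mumford numerical criteria applied to the $G$-orbit, or from the dimension analysis of the tight variety carried out in this paper — that detects tightness beyond the level of its support and witnesses the strict hierarchy \emph{tight} $\subsetneq$ \emph{oblique} $\subsetneq$ \emph{free} in a way invisible to the presently known part of the asymptotic spectrum. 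Producing such an invariant, rather than any specific sequence of degenerations, is where I expect the essential difficulty to lie.
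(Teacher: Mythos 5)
This statement is a conjecture, not a theorem: the paper does not prove it, it records it as Strassen's open Asymptotic Rank Conjecture, and its entire purpose in the paper is to motivate the geometric study of $\bar{\Tight}_m$, $\bar{\Oblique}_m$, and $\bar{\Free}_m$. You correctly recognized this and, rather than fabricating a proof, laid out a plausible attack and honestly identified where it breaks down. That is the right response here, and your summary of the state of the art (support functionals and the quantum functionals of \cite{2017arXiv170907851C} do not in general certify the value $m$) is accurate.

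One technical correction: tightness guarantees that $\frakg_T$ contains a single regular semisimple element, equivalently that $T$ is stabilized by a \emph{one}-parameter regular semisimple subgroup of $G$, not a three-dimensional torus $H$. The ambient torus in $\frakgl(A)\oplus\frakgl(B)\oplus\frakgl(C)$ (modulo $\frakz_{A,B,C}$) from which that element is drawn is of course larger, but $\frakg_T$ need only meet it in a line; indeed Proposition \ref{prop: generic stabilizer of T in max tight support} exhibits a generic tight tensor with $\dim\frakg_T=1$. Your weight-space/laser-method outline survives this correction unchanged, since the weight functions $\tau_A,\tau_B,\tau_C$ come from that single regular semisimple element. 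But the distinction matters for your final suggestion: any ``genuinely geometric invariant'' of the sort you propose would have to be sensitive to this minimal amount of symmetry, which is exactly what makes the problem hard — a generic tight tensor is only barely non-generic.
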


One can consider generalizations of Conjecture \ref{strconj} to the class of oblique and free tensors. These generalizations are natural in light of Strassen's work: they implicitly already appeared in \cite{Str:AsySpectrumTensors,MR1089800,Strassen:AlgebraComplexity} and later in \cite{BCS,2017arXiv170907851C}. We pose the following questions to make them explicit.

\begin{question}\label{conj: oblique}
Let $T \in \bbC^m \otimes \bbC^m \otimes \bbC^m$ be oblique and concise. Is $\aR(T) = m$? In other words, do all concise oblique tensors have minimal asymptotic rank? 
\end{question}

\begin{question}\label{conj: free}
Let $T \in \bbC^m \otimes \bbC^m \otimes \bbC^m$ be free and concise. Is $\aR(T) = m$? In other words, do all concise free tensors have minimal asymptotic rank? 
\end{question}

In \cite{BCS}, the authors posed the question in full generality:
\begin{question}[\cite{BCS}, Problem 15.5] \label{bcsconj}
Is $\aR(T)=m$ for all concise $T\in \BC^m\ot \BC^m\ot \BC^m$? In other words, do all tensors have minimal asymptotic rank? 
\end{question}

This gives a hierarchy of affirmative answers to each of the five problems stated above: 

\begin{center}
Question \ref{bcsconj}
$\Rightarrow$
  Question \ref{conj: free} $\Rightarrow$ 
    Question \ref{conj: oblique} $\Rightarrow$ 
      Conjecture \ref{strconj} $\Rightarrow$ Conjecture \ref{conj: mamu}. 
\end{center}
 
Our goal is to study to what extent these problems are different.  As a step towards this goal, we compare the sets defined by each of the problems. In particular, we determine the dimensions of the varieties of tensors to which each of the five problems applies.

Let $\bar{\Tight}_m$, $\bar{\Oblique}_m$ and $\bar{\Free}_m$ be the closures (equivalently in Zariski or Euclidean topology) of the sets of concise tight, oblique and free tensors respectively. Let 
  \[
\bar{\MaMu}_m := \bar{GL(A)\times GL(B)\times GL(C) \cdot \Mn} \subseteq A 
\otimes B \otimes C   
  \]
  with $\bfa = \bfb = \bfc = m = \bfn^2$. Then Conjecture \ref{conj: mamu} may  be rephrased as follows. If  $T \in \bar{\MaMu}_m$, then $\aR(T) \leq m$. Similar reformulations  of the other questions/conjectures can be given in terms of the varieties $\bar{\Tight}_m,\bar{\Oblique}_m,\bar{\Free}_m$ and finally in terms of the space  $\bbC^m \otimes \bbC^m \otimes \bbC^m$.

 The following result determines the dimensions of the varieties $\bar{\MaMu}_m$, $\bar{\Tight}_m$, $\bar{\Oblique}_m$, $\bar{\Free}_m$, providing a first comparison among the sets of tensors to which each of the conjectures mentioned above applies.

\begin{theorem}\label{thm: dimension of classes of tensors}
 Let $m \geq 2$ and let $\bfa = \bfb = \bfc = m$. Then
\begin{enumerate}[(i)]
\item if $m = \bfn^2$, then $\dim \bar{\MaMu}_m = 3m^2 - 3m$;
\item $\dim \bar{\Tight}_m = 3m^2 + \lceil \frac{3}{4} m^2 \rceil - 3m$;
\item $\dim \bar{\Oblique}_m = 3m^2 + \lceil \frac{3}{4} m^2 \rceil - 3m$;
\item $\dim \bar{\Free}_m = 4m^2 - 3m$.
\end{enumerate}
\end{theorem}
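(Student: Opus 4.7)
The plan is to treat (i) as a classical orbit-dimension computation and to handle (ii)--(iv) in parallel by writing each variety as a union $\bigcup_\calS \bar{G \cdot L_\calS}$, where $\calS \subseteq [m]^3$ ranges over concise subsets of the appropriate type and $L_\calS := \{ T \in A \otimes B \otimes C : \supp(T) \subseteq \calS \}$ denotes the coordinate subspace of tensors supported on $\calS$.

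For (i), the identity component of the stabilizer of $\Mn$ in $G$ is the image of the natural map $GL_\bfn^{\times 3} \to G$ coming from change of basis on the spaces of $\bfn \times \bfn$ matrices; a direct computation (see \cite{Strassen:AlgebraComplexity}) gives that this stabilizer has dimension $3\bfn^2 = 3m$, so $\dim \bar{\MaMu}_m = 3m^2 - 3m$.

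For (ii)--(iv), the diagonal maximal torus $T^{3m} \subseteq G$ preserves every $L_\calS$; combined with the fact that for a generic $T \in L_\calS$ the fiber of the multiplication map $G \times L_\calS \to G \cdot L_\calS$ at $T$ is isomorphic to $N_G(L_\calS)$, this gives the uniform upper bound
\[ \dim \bar{G \cdot L_\calS} \;\leq\; \dim G + \dim L_\calS - 3m \;=\; 3m^2 + |\calS| - 3m. \]
The problem then splits into (a) bounding $|\calS|$ for concise $\calS$ of each type, and (b) producing a maximum-size $\calS$ for which $N_G(L_\calS)$ equals exactly $T^{3m}$. For (a), a free $\calS$ projects injectively onto each pair of coordinates, so $|\calS| \leq m^2$, with equality realized by a Latin square of order $m$; an oblique $\calS$ is an antichain in the product poset $[m]^3$, and by the strong Sperner (de Bruijn--Tengbergen--Kruyswijk) property of products of chains its size is at most that of the largest rank-level $L_s = \{(i,j,k) : i+j+k = s\}$, which equals $\lceil 3m^2/4 \rceil$ by a direct computation with the polynomial $(x + \cdots + x^m)^3$; a tight $\calS$ is oblique (Remark \ref{rmk: tight is oblique}), so the same bound holds, and a middle level is itself tight via $\tau_A(i) = i$, $\tau_B(j) = j$, $\tau_C(k) = k - s$, realizing the bound.

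The main obstacle is step (b), a weight/Lie-algebra computation. Since $T^{3m}$ acts on $\mathfrak{g} = \mathfrak{gl}(A) \oplus \mathfrak{gl}(B) \oplus \mathfrak{gl}(C)$ with distinct characters on the three summands and $\mathfrak{n}_\mathfrak{g}(L_\calS)$ is $T^{3m}$-invariant, it decomposes as a direct sum of its intersections with $\mathfrak{gl}(A)$, $\mathfrak{gl}(B)$, $\mathfrak{gl}(C)$, and no cross-cancellation between these summands is possible. Within $\mathfrak{gl}(A)$, the elementary matrix $E_{ii'}$ (with $i \neq i'$) preserves $L_\calS$ if and only if $\calS_{i'} \subseteq \calS_i$, where $\calS_i := \{(j,k) : (i,j,k) \in \calS\}$ is the $A$-slice at $i$; the analogous statement holds in the $B$- and $C$-directions. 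Hence $\mathfrak{n}_\mathfrak{g}(L_\calS) = \mathfrak{t}^{3m}$ as soon as the slices in all three directions are pairwise incomparable under inclusion. For each of our three maximum-size examples this holds directly: the rows of a Latin square have equal cardinality $m$ and are pairwise distinct; the slices of a middle level of $[m]^3$ are pairwise disjoint antidiagonals in $[m]^2$; and the same holds for the tight middle level. Combining (a) and (b) then yields the claimed dimensions.
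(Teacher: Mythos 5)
Your overall strategy is the same as the paper's: bound $|\calS|$ combinatorially (your step (a) matches Theorem~\ref{thm: maximum support} and Theorem~\ref{maxsuppoblique}), compute that the normalizer of $L_\calS = \langle\calS\rangle$ has Lie algebra exactly the diagonal torus (your step (b) is precisely Lemma~\ref{lemma: annihilator of free concise support}), and combine. But there is a genuine gap in the middle, and it is exactly the gap the paper spends most of its effort closing.

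The claim that ``for generic $T\in L_\calS$ the fiber of $\mu:G\times L_\calS\to G\cdot L_\calS$ at $T$ is isomorphic to $N_G(L_\calS)$'' is asserted, never proved, and is not a general fact. The fiber is $F_T=\{g\in G: g^{-1}T\in L_\calS\}$, which is right-$N_G(L_\calS)$-invariant and hence \emph{contains} a copy of $N_G(L_\calS)$ --- this gives your upper bound and nothing more. The quotient $F_T/N_G(L_\calS)$ is in bijection with the set of translates $gL_\calS$ that pass through $T$, which is precisely the fiber $Y_T=\pi_V^{-1}(T)$ in the paper's incidence correspondence. Showing this set is finite is the hard part, and your step (b) does not address it: knowing $\mathfrak{n}_{\mathfrak g}(L_\calS)=\mathfrak t^{3m}$ tells you only that $\{L\in\mathfrak g: L\cdot L_\calS\subseteq L_\calS\}$ is small, whereas what is needed is that $\{L\in\mathfrak g: L\cdot T\in L_\calS\}$ is no bigger than the torus for generic $T$ --- a strictly stronger statement, since a derivation can send one generic $T$ into $L_\calS$ without preserving $L_\calS$. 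Indeed, a telltale sign of the gap is that your slice-incomparability criterion holds automatically for \emph{every} concise free support (free forces the slices $\calS_i,\calS_{i'}$ to be disjoint and conciseness forces them to be nonempty), so your argument, were it complete, would prove $\dim\bar{G\cdot L_\calS}=3m^2-3m+|\calS|$ for all concise free $\calS$ with no use of the particular examples at all.

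The paper closes this gap with Proposition~\ref{prop: generic stabilizer of T in max tight support}, a dedicated several-case Lie-algebra computation showing that the generic tensor $T$ with support $\calS_{t\text{-}max,m}$ has $\dim\mathfrak g_T=1$; it then argues that a positive-dimensional family of translates $F=gE$ through $T$ would produce distinct one-parameter subgroups $\Gamma_F=g\Gamma_Eg^{-1}\subseteq G_T$, contradicting $\dim G_T=1$, and deduces the free case by semicontinuity from a tight sub-support of $\calS_{f\text{-}max,m}$. To repair your proof you need to supply an analogue of that stabilizer computation; the normalizer computation alone is not enough.
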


The statement of (i) dates back at least to \cite{dGro:VarsOptAlgIIsoGrps}. The proofs of the remaining statements are obtained in Section \ref{tightsect} by applying a natural vector bundle construction (an incidence correspondence) to the explicit maximal supports for each case in Theorem \ref{thm: maximum support} below, which is also proved in Section \ref{tightsect}.

Theorem \ref{thm: dimension of classes of tensors} shows that for the varieties of interest for all the problems stated above, except Question \ref{bcsconj}, the dimension is quadratic in the dimension $m$. We were surprised to discover that the varieties $\bar{\Tight}_m $ and $\bar{\Oblique}_m$ have the same dimension. This can be viewed as suggesting that the different forms of the asymptotic rank conjecture are similar, at least in terms of the set of tensors to which they apply.

One can study asymptotic rank for any single tensor which does not have minimal border rank. Because of the self-reproducing property of the matrix multiplication tensor, proving $\aR(M_{\langle 2 \rangle}) = 4$ (or $\aR(M_{\langle \bfn \rangle}) = \bfn^2$ for any single $\bfn \geq 2$) would prove Conjecture \ref{conj: mamu}, and thus that the exponent of matrix multiplication is two. Even more interestingly, a consequence of \cite{CopperWinog:MatrixMultiplicationArithmeticProgressions} (see \cite[Remark 15.44]{BCS}) is that Conjecture \ref{conj: mamu} would follow were $\aR(T_{cw,2}) = 3$, where $T_{cw,2} \in \bbC^3 \otimes \bbC^3 \otimes \bbC^3 $ is the small Coppersmith-Winograd tensor. In \cite{CGLVkron}, we observed that Conjecture \ref{conj: mamu} would follow from other explicit tensors having minimal asymptotic rank. 

More generally, Conjecture \ref{conj: mamu} would follow from the $m=3$ case of any of the generalizations; in the case $m=3$, the four generalizations reduce to two:

\begin{theorem}\label{m3thm}
 Let $\bfa = \bfb = \bfc = 3$. Then $A \otimes B \otimes C = \bar{\Free}_3$ and $\bar{\Tight}_3=\bar{Oblique}_3$, which has codimension $2$.
\end{theorem}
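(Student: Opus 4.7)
The plan is to handle the three assertions in turn, with the identification $\bar{\Tight}_3=\bar{\Oblique}_3$ being the substantive one.

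For $\bar{\Free}_3=A\otimes B\otimes C$: Theorem \ref{thm: dimension of classes of tensors}(iv) at $m=3$ gives $\dim\bar{\Free}_3=4\cdot 9-9=27=\dim(A\otimes B\otimes C)$, and since $\bar{\Free}_3$ is a closed subvariety of the irreducible ambient space of equal dimension, the two coincide. The codimension assertion for $\bar{\Tight}_3$ and $\bar{\Oblique}_3$ follows from parts (ii) and (iii) of the same theorem: both have dimension $27+\lceil 27/4\rceil-9=25$, hence codimension $2$ in the $27$-dimensional ambient space.

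For the equality $\bar{\Tight}_3=\bar{\Oblique}_3$, the inclusion $\subseteq$ is Remark \ref{rmk: tight is oblique}. For the reverse, I plan to prove the sharper combinatorial statement that every oblique subset $\calS\subseteq[3]\times[3]\times[3]$ is already tight. This immediately yields the inclusion of closures in the other direction: a concise oblique tensor $T$ has a basis realizing its support as an antichain $\calS$, and if $\calS$ is tight then $T$ is tight in the same basis. The question thus reduces to a combinatorial claim about subsets of $[3]^3$.

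I plan to establish this claim by linear algebra. For a given oblique $\calS\subseteq[3]^3$, Theorem \ref{thm: maximum support} gives $|\calS|\leq 7$, so the homogeneous system
\[
\tau_A(i)+\tau_B(j)+\tau_C(k)=0 \qquad \text{for } (i,j,k)\in\calS,
\]
in the nine unknowns $\tau_A(1),\ldots,\tau_C(3)$ has solution space $V\subseteq\bbC^9$ of dimension at least $9-7=2$. A general element of $V$ produces injective functions $\tau_A,\tau_B,\tau_C$ provided $V$ is not contained in any of the nine hyperplanes $\{\tau_X(i)=\tau_X(i')\}$ with $X\in\{A,B,C\}$ and $i\neq i'$. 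The template case is the maximum concise antichain $\calS_0=\{(i,j,k):i+j+k=6\}$, which is tight via $\tau_A(\ell)=\tau_B(\ell)=\tau_C(\ell)=\ell-2$; every $\calS$ contained in $\calS_0$ after a Weyl reordering inherits tightness.

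The main obstacle is to verify, for every concise antichain $\calS\subseteq[3]^3$ (including those not contained in any reordering of $\calS_0$, for instance $\{(1,1,3),(1,3,1),(3,1,1),(2,2,2)\}$), that no coincidence $\tau_X(i)=\tau_X(i')$ is forced by the linear system. I plan to organize this as a finite case analysis by enumerating concise antichains up to the Weyl group action of $\frakS_3\times\frakS_3\times\frakS_3$ and checking each orbit representative; the forcing condition is itself linear (a flow-like constraint on $\lambda:\calS\to\bbC$ with prescribed marginals), making each check elementary. This is where the hypothesis $m=3$ enters essentially: the bound $|\calS|\leq 7<9$ leaves the solution space positive-dimensional, and the small number of Weyl orbits of concise antichains in $[3]^3$ keeps the verification tractable.
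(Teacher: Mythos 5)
Your proposal follows essentially the same strategy as the paper: handle the dimension statements via Theorem~\ref{thm: dimension of classes of tensors} and reduce $\bar{\Tight}_3=\bar{\Oblique}_3$ to the combinatorial claim that every concise antichain in $[3]^3$ admits tight weight functions, verified by a finite enumeration of antichains up to symmetry. The paper carries this out explicitly, reporting $80$ concise maximal antichains in $13$ orbits and exhibiting a tight $(\tau_A,\tau_B,\tau_C)$ for each representative.

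Two points in your outline need repair, though. First, the symmetry group is wrong: $\frakS_3\times\frakS_3\times\frakS_3$ (the Weyl group of $GL_3^{\times 3}$, permuting indices within each factor) does \emph{not} preserve the property of being an antichain with respect to the standard total orders --- for instance it sends the chain $\{(0,0,0),(1,1,1)\}$ to the antichain $\{(1,0,0),(0,1,1)\}$ --- so ``enumerating concise antichains up to $\frakS_3^{\times 3}$'' is not a well-posed reduction. The paper instead uses $\frakS_3\times\bbZ_2$, where $\frakS_3$ permutes the three tensor factors and $\bbZ_2$ is the simultaneous order reversal $(i,j,k)\mapsto(2-i,2-j,2-k)$; both generators genuinely preserve antichains. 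Second, the dimension count $\dim V \geq 9-|\calS|\geq 2$ carries no information: the $2$-dimensional subspace of constant triples $(\lambda,\mu,-\lambda-\mu)$ always lies in $V$ and never yields injective $\tau$'s, and even $\dim V\geq 3$ does not preclude $V$ being contained in a coincidence hyperplane $\{\tau_X(i)=\tau_X(i')\}$. As you acknowledge, the burden falls entirely on the orbit-by-orbit verification; the paper completes it, while your proposal leaves it as a plan.
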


In addition, in \cite{CGLVkron} we provided numerical evidence showing that if $T$ is generic in $\bbC^3 \otimes \bbC^3\otimes \bbC^3$, then $\uR(T^{\boxtimes 2}) \leq 22 < 5^2 = \uR(T)$. This can be taken as positive evidence for the generalization proposed in Question \ref{bcsconj} in the case $m=3$. On the other hand, we proved that for $m > 3$, there are tensors such that $\uR(T) = m+1$ is not minimal and $\uR(T^{\boxtimes 2} ) = (m+1)^2$ and $\uR(T^{\boxtimes 3} ) = (m+1)^3$; in particular, $T_{cw,q}$ shows this multiplicative behaviour and this can be taken as negative evidence even for Conjecture \ref{strconj} for general $m$.

When  $\bfa =\bfb = \bfc = 4$, $\bar{\Free_4}$ is a variety of codimension $2$, $\bar{\Oblique}_4$ and $\bar{\Tight}_4$ have codimension $16$, and $\MaMu_4$ has codimension $28$. Proposition \ref{prop: tight strict in oblique} shows that the inclusion $\bar{\Tight}_m \subseteq \bar{\Oblique}_m$ is strict for all $m \geq 4$.

The following result determines the maximum possible support of a tight, oblique or free tensor in a tight, oblique or free basis.
\begin{theorem}\label{thm: maximum support}
 Let $\calS \subseteq [m] \times [m] \times [m]$. Then
 \begin{enumerate}[(i)]
  \item if $\calS$ is tight then $\vert \calS \vert \leq \lceil \frac{3}{4} m^2 
\rceil$ and the inequality is sharp;
  \item if $\calS$ is oblique then $\vert \calS \vert \leq \lceil \frac{3}{4} 
m^2 \rceil$ and the inequality is sharp;
  \item if $\calS$ is free then $\vert \calS \vert \leq m^2 $ and the inequality 
is sharp.  
 \end{enumerate}
\end{theorem}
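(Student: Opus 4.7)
The plan is to handle the three parts in order of increasing depth, exploiting that tight $\Rightarrow$ oblique $\Rightarrow$ free (the former remarked in the paper, the latter immediate from the definitions). In particular, the upper bound in (ii) implies the upper bound in (i), and a single extremal tight set will realize sharpness in both (i) and (ii).

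For (iii), freeness forces the projection $\calS \to [m]\times [m]$ onto the first two coordinates to be injective: two triples agreeing in two coordinates would differ only in the third, violating the definition. Hence $|\calS|\le m^2$, and the set $\{(i,j,k)\in[m]^3 : k\equiv i+j\pmod{m}\}$ realizes equality and is evidently free.

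For (ii), after relabeling bases I may assume the total orders on the three copies of $[m]$ are the standard ones and $\calS$ is an antichain in the componentwise partial order on $[m]^3$. The upper bound then follows from the classical theorem of de Bruijn, Tengbergen and Kruyswijk: the maximum antichain in a product of chains coincides with a largest rank level $\{(i,j,k):i+j+k = t\}$. The size of this level equals the coefficient of $x^t$ in the generating function $(x+x^2+\cdots+x^m)^3$, a symmetric unimodal sequence peaked near $t = 3(m+1)/2$; a short convolution calculation identifies the peak value as $\lceil\frac{3}{4} m^2\rceil$.

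For (i), tight implies oblique: after re-ordering each factor along the injective witness $\tau$, the expression $\tau_A+\tau_B+\tau_C$ becomes strictly monotone in each coordinate separately, so a strict comparison between distinct triples of $\calS$ in the product order would force two sums, both equal to zero, to differ. Hence (i) inherits the bound from (ii). Sharpness of (i), and thereby of (ii), is exhibited by the tight set $\calS_s := \{(i,j,k)\in[m]^3 : i+j-k = s\}$ with witnesses $\tau_A(i)=i$, $\tau_B(j)=j$, $\tau_C(k)=-(k+s)$, taking $s=\lceil m/2 \rceil$ so that the interval $\{t=i+j : t\in[s+1,s+m]\}$ selects the densest $m$ consecutive values of $t\mapsto |\{(i,j)\in[m]^2 : i+j=t\}|$. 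A direct lattice-point count gives $|\calS_s|=\lceil\frac{3}{4} m^2\rceil$, and conciseness of $\calS_s$ is straightforward from the construction. The main obstacle is the antichain bound underlying (ii); the de Bruijn--Tengbergen--Kruyswijk theorem provides it cleanly, but an in-paper proof would need either a citation or a short reproduction via the symmetric-chain decomposition of the product of chains.
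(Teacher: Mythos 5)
Your proposal is correct and follows the same three-part structure as the paper: (iii) by direct counting and a modular example, (ii) as an antichain bound reduced to a Sperner-type theorem plus a convolution computation of the peak level, and (i) via the reduction "tight $\Rightarrow$ oblique" with sharpness from a tight extremal set. The one genuine difference is the source of the key Sperner property for $[m]^3$: the paper invokes Proctor's theorem that products of Peck posets are Peck (a representation-theoretic statement about $\mathfrak{sl}_2$-actions), while you invoke de Bruijn--Tengbergen--Kruyswijk, the classical result for products of chains provable via symmetric chain decomposition. Both are valid and give the same conclusion; dBTK is the more elementary and more narrowly targeted route, whereas the paper's choice of Proctor sets up terminology (poset rank, Peck) that it reuses in Remark~\ref{rmk: oblique when tight} and in identifying $\calS_{t\text{-}max,m}$ with a weight-zero space. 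Your extremal set $\calS_s=\{(i,j,k):i+j-k=s\}$ with $s=\lceil m/2\rceil$ (in $1$-based indexing $[m]=\{1,\dots,m\}$) is the paper's $\calS_{t\text{-}max,m}=\{i+j+k=3\ell\}$ up to reversing the order on the third factor, and a quick check confirms $|\calS_s|=\lceil\tfrac34 m^2\rceil$ and conciseness in small cases; the paper's computation of the peak level size is by counting the complement in $[\bfa]\times[\bfb]$ rather than by generating functions, but the arithmetic is the same.
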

Note that  with  $m = \bfn^2$, the standard presentation of the matrix multiplication tensor gives $| \supp(\Mn)| = \lfloor \frac{3}{2}m^2 \rfloor$. 
  
The sharpness results in Theorem \ref{thm: maximum support} follow by exhibiting explicit supports with the desired cardinality. The support described in the proof of Theorem \ref{thm: maximum support}(i) is used in \cite{LMhighbr} to construct the first explicit sequence (depending on $m$) of tensors in $\BC^m\ot \BC^m\ot \BC^m$ of border rank greater than $2m$.

In Section \ref{section: compressibility}, we discuss {\it compressibility} and {\it slice rank} of tensors. We explain {\it Strassen's support functionals} and how they motivate Conjecture \ref{strconj}. In particular, we prove that tight tensors are far more compressible than generic tensors (Theorem \ref{compressthm}), which could be taken as evidence to favor Conjecture \ref{strconj} over the other more general problems.

In Section \ref{section: propagation of symmetries}, we establish results on the growth of symmetry groups of tensors under direct sums and Kronecker products; see Theorem \ref{thm: propagation of symmetries}. The dimension of the symmetry group of a tensor is a geometric invariant which is upper semicontinuous under degeneration. In particular, the result of Theorem \ref{thm: propagation of symmetries}(iii) shows that tensors which are generic in terms of dimension of symmetry group (namely having a $0$-dimensional symmetry group), remain generic under Kronecker product. This can also be interpreted as evidence to favor Conjecture \ref{strconj} over Questions \ref{conj: oblique}, \ref{conj: free} and \ref{bcsconj}.

Finally, Theorem \ref{thm: propagation of symmetries} is motivated by the connection between symmetries of a tensor and Strassen's laser method. We refer to \cite{CGLVkron} for details on the method: we mention here that this technique can be applied to \emph{block tight tensors}, defined implicitly in \cite[\S 15.6]{BCS} and explicitly  in \cite[Def. 5.1.4.2]{LCBMS}, a property weaker than tightness but still implying the tensor has  continuous symmetries.

\subsection*{Acknowledgements}
We thank the anonymous referees for their valuable comments on an earlier version of this paper. 

\section{Tight, oblique, and free tensors}\label{tightsect}

In this section, we establish information about the sets of tight, oblique and 
free tensors, and prove Theorems \ref{thm: dimension of classes of tensors} and 
\ref{thm: maximum support}.

\subsection{Tight tensors}\label{subsec: tight tensors}

Tightness can be characterized in terms of the stabilizer of a tensor in $A 
\otimes B \otimes C$ under the action of $G= GL(A) \times GL(B) \times GL(C)$. 
We introduce some useful notation and definitions.

Let $\Phi : GL(A) \times GL(B) \times GL(C) \to GL(A \otimes B \otimes C)$ be 
the group homomorphism defining the natural action of $GL(A) \times GL(B) \times 
GL(C)$ on $A \otimes B \otimes C$; $\Phi$ has a $2$-dimensional kernel 
$Z_{A,B,C} = \{ (\lambda \Id_A, \mu \Id_B, \nu \Id_C) : \lambda\mu\nu = 1\}$, so 
that $G := (GL(A) \times GL(B) \times GL(C)) / Z_{A,B,C}$ can be regarded as a 
subgroup of $GL(A \otimes B \otimes C)$. The symmetry group of $T$, denoted 
$G_T$, is the stabilizer in $G$ under this action, that is $G_T := \{ g \in G: 
g\cdot T = T\}$.

The differential $d\Phi$ of $\Phi$ induces a map at the level of Lie algebras: 
write $\frakg_T$ for the annihilator of a tensor $T$ under the action of 
$(\frakgl(A) \oplus \frakgl(B) \oplus \frakgl(C)) / \frakz_{A,B,C}$ where 
$\frakz_{A,B,C}\simeq \bbC^2$ is the Lie algebra of $Z_{A,B,C}$: explicitly 
$\frakz_{A,B,C} = \{ (\lambda \Id_A ,\mu\Id_B ,\nu \Id_C : \lambda + \mu + \nu = 
0\}$. Since $\frakg_T$ is the Lie algebra of $G_T$, it determines the continuous 
symmetries of $T$, i.e., the connected component of the identity of $G_T$.

Fix $T \in A \otimes B \otimes C$. Then $T$ is tight if and only if $\frakg_{T}$ 
contains a regular semisimple element of $(\frakgl(A) \oplus \frakgl(B) \oplus 
\frakgl(C) )/ \frakz_{A,B,C}$. A regular semisimple element is a triple $L = 
(X,Y,Z)$ which, under some choice of bases, is represented by diagonal matrices 
$X,Y,Z$, each of them having distinct eigenvalues. Equivalently, $T$ 
is stabilized by a regular semisimple one-parameter subgroup of $(GL(A) \times 
GL(B) \times GL(C)) / Z_{A,B,C}$. Observe that the tightness of $T$ in a given 
basis only depends on the support of $T$; in particular, the eigenvalues of the 
three matrices in $L = (X,Y,Z)$, suitably rescaled, provide the functions 
$\tau_A,\tau_B,\tau_C$ of Definition \ref{defin: tight oblique and free}. We 
refer to \cite{MR1089800,MR2138544} for the complete proof that the two 
characterizations are equivalent.

\begin{example}[A tight support of cardinality $\lceil \frac{3}{4} m^2\rceil$] 
\label{example: max tight support}
 Let $m \geq 0$ be an odd integer and write $m = 2\ell +1$. Define 
 \[
  \calS_{t\text{-}max, m} = \left\{  (i,j,k) \in [m] \times [m] \times [m]: i + 
j + k = 3\ell \right\}.
 \]
By Definition \ref{defin: tight oblique and free}, $\calS_{t\text{-}max,m}$ is 
tight. Let $\bfa=\bfb=\bfc=m$ and let $T \in A \otimes B \otimes C$ be any 
tensor with support $\calS_{t\text{-}max,m}$. Let $L = (U,V,W) \in \frakgl(A) 
\oplus \frakgl(B) \oplus \frakgl(C)$ be the triple of diagonal matrices $U = V = 
W$ having $i - \ell$ at the $i$-th diagonal entry, with $i = 0 \vvirg m-1$. Then 
$L . T = 0$, because for every element $(i,j,k) \in \supp(T)$ we have 
\[
L . (a_i \otimes b_j \otimes c_k) = [(i-\ell) + (j-\ell) + (k-\ell)] a_i \otimes 
b_j \otimes c_k = 0.
\]
If $T$ has support $\calS_{t\text{-}max,m}$, one can write $T = \sum_{jk} T^{jk} 
a_{3\ell - j - k} \otimes b_j \otimes c_k$. We can represent $T$ as an $m \times 
m$ matrix whose entries are elements of $A$; in this case, we have
\begin{equation}\label{eqn: matrix max support}
\left[
\begin{array}{ccccccc}
&  & &T^{0,\ell} a_{2\ell} & \cdots & T^{0,2\ell-1} a_{\ell+1} & T^{0,2\ell} 
a_\ell  \\[2em]
&  & \udots & & &  \udots & T^{1,2\ell} a_{\ell-1} \\[2em] & \udots & & &  
\udots & & \vdots  \\[2em]
T^{\ell,0} a_{2\ell}  & & & \udots & & &  T^{\ell,2\ell} a_{0} \\[2em]
 \vdots &  & \udots & & & \udots  &  \\[2em]
T^{2\ell-1,0} a_{\ell+1} & \udots & & & \udots &  &   \\[2em] 
T^{2\ell,0} a_\ell  & T^{2\ell,1} a_{\ell -1}& \cdots & T^{2\ell , \ell} a_{0} & 
& &
\end{array}
\right].
\end{equation}
Each nonzero entry in this matrix corresponds to an element of 
$\calS_{t\text{-}max,m}$; each of the two triangles of $0$'s (the top left and 
the bottom right) consists of $\binom{\ell+1}{2}$ entries. Therefore the number 
of nonzero entries is $(2\ell+1)^2 - (\ell+1)(\ell) = 3\ell^2 + 3\ell +1 = 
\lceil \frac{3}{4}m^2 \rceil$.

If $m = 2\ell$ is even, one obtains a tight support of cardinality $3\ell^2 = 
\lceil \frac{3}{4}m^2 \rceil$ by erasing the last row and the last column of the 
matrix and setting $a_0$ to $0$. Geometrically this is equivalent to applying 
the projection which sends $a_0,b_{2\ell},c_{2\ell}$ to $0$ and the other basis 
vectors of the odd dimensional spaces to basis vectors of the even dimensional 
spaces. Explicitly, if one has bases $\{ a_0 \vvirg a_{2\ell-1}\}$, $\{ b_0 
\vvirg b_{2\ell-1}\}$,$\{ c_0 \vvirg c_{2\ell-1}\}$ of the spaces $A,B,C$ of 
dimension $2\ell$, the tight support is determined by the functions $\tau_A (i) 
= i-\ell +1$, $\tau_B (j) = \tau_C(j) = j-\ell$.
\end{example}

It turns out that the element $L$ introduced in Example \ref{example: max tight 
support} is, up to scale, the only non-trivial element of $\frakg$ which 
annihilates  a generic tensor with support $\calS_{t\text{-}max,m}$, as shown in 
the following result.

\begin{proposition}\label{prop: generic stabilizer of T in max tight support}
 Let $T \in A \otimes B \otimes C$ be a generic tensor with support 
$\calS_{t\text{-}max,m}$. Then $\dim \frakg_T = 1$ and $\frakg_T = \langle L 
\rangle$ where $\langle - \rangle$ denotes the linear span and $L = (U,V,W)$ 
where $U,V,W$ are diagonal with $u^i_i = v^i_i = w^i_i = i - \ell$.
\end{proposition}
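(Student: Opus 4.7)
The plan is to combine a trivial lower bound with an upper semicontinuity reduction and an explicit weight-space computation at one judiciously chosen tensor. For every $T$ supported on $\calS_{t\text{-}max,m}$ the element $L$ annihilates $T$, since each basis vector $a_i \otimes b_j \otimes c_k$ in the support has $L$-weight $(i-\ell)+(j-\ell)+(k-\ell)=0$, and $L$ is not a multiple of the identity so it represents a nonzero class in $\fg$; hence $\dim \fg_T \geq 1$. Since $\fg_T$ is the kernel of the linear map $(X,Y,Z) \mapsto (X,Y,Z)\cdot T$, which depends linearly on $T$, the dimension $\dim \fg_T$ is upper semicontinuous, and it suffices to exhibit one $T_0$ with support $\calS_{t\text{-}max,m}$ for which $\dim \fg_{T_0} \leq 1$.

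I would take the ``all-ones'' tensor $T_0 := \sum_{(i,j,k) \in \calS_{t\text{-}max,m}} a_i \otimes b_j \otimes c_k$. The adjoint action of $L$ grades $\fgl(A) \oplus \fgl(B) \oplus \fgl(C)$ by weight: the matrix unit $E^p_i \in \fgl(A)$ has weight $p-i$, and analogously for $\fgl(B),\fgl(C)$. Since $T_0$ is a sum of $L$-weight-zero basis vectors, the condition $L'\cdot T_0 = 0$ is homogeneous with respect to this grading, so each weight component of $L'$ must annihilate $T_0$ independently. For the weight-$0$ component $L' = (X,Y,Z)$ with $X,Y,Z$ diagonal, the equations read $x_p + y_q + z_r = 0$ for every $(p,q,r) \in \calS_{t\text{-}max,m}$. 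Differencing at consecutive triples (e.g.\ at $(p,q,r)$ and $(p{+}1,q{-}1,r)$) shows $x_{p+1}-x_p = y_q - y_{q-1} = z_r - z_{r-1}$ is a common constant $s$; hence $x_p = x_0 + sp$, $y_q = y_0 + sq$, $z_r = z_0 + sr$, subject only to $x_0 + y_0 + z_0 + 3 s \ell = 0$. The solution space is $3$-dimensional, spanned by $L$ together with the $2$-dimensional $\frakz_{A,B,C}$, giving dimension $1$ in $\fg$.

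For a weight-$w$ component with $w > 0$ (the case $w<0$ is symmetric), set $\xi_p := X^p_{p-w}$, $\eta_q := Y^q_{q-w}$, $\zeta_r := Z^r_{r-w}$ for indices in $\{w,\dots,m-1\}$. The equation $L'\cdot T_0 = 0$ at $(p,q,r)$ with $p+q+r = 3\ell+w$ becomes
\[
[p \geq w]\, \xi_p + [q \geq w]\, \eta_q + [r \geq w]\, \zeta_r = 0.
\]
``Boundary'' equations, where one of $p,q,r$ is smaller than $w$ and the other two are at least $w$, produce pairwise relations $\xi_p + \eta_q = 0$, $\xi_p + \zeta_r = 0$, $\eta_q + \zeta_r = 0$ on overlapping index ranges; chaining them yields the antisymmetry $\xi_p = -\xi_{3\ell+w-p}$ and expresses $\eta, \zeta$ in terms of $\xi$ up to sign. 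Substituting into ``interior'' equations (all three indices $\geq w$) gives the Cauchy-type identity $\xi_a + \xi_b = \xi_{a+b-(3\ell+w)}$, and combined with the antisymmetry this forces $\xi \equiv 0$, hence $\eta=\zeta=0$.

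The main obstacle is the case analysis in the weight-$w$ step, because the balance between boundary and interior equations shifts with $w$. For $w \leq \lfloor 3\ell/2 \rfloor$ both regimes are populated and the additive relation together with the antisymmetry quickly kills $\xi$; for $w$ close to $2\ell$ the interior becomes empty and the full conclusion must be drawn from boundary relations alone, which remain a tightly constrained system (as already visible in the small example $m = 3$, where the three pairwise sums at the top weight suffice). A uniform verification across $w \in \{1,\dots,2\ell\}$, together with careful tracking of index ranges, completes the argument.
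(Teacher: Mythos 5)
Your overall strategy is the same as the paper's: reduce to the all-ones tensor by semicontinuity, then split the annihilator equations according to the $L$-weight (your $w$ is exactly the paper's $\rho$) and kill each nonzero weight component. Your weight-zero treatment — differencing at adjacent support triples to force $x_p,y_q,z_r$ to be affine in the index — is a clean alternative to the paper's, which instead normalizes $V,W$ to be traceless and sums whole rows of equations; both land on the same $3$-dimensional solution space.

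The nonzero-weight step, however, is asserted rather than established, and the specific relations you state do not actually cover all weights. The boundary relation $\xi_p + \eta_{3\ell+w-p}=0$ (the $r=0$ slice) is only available for $p \in [\ell+w,\,2\ell]$, so it produces the "antisymmetry" on a shrinking sub-interval; and substituting it into an interior equation $\xi_p+\eta_q+\zeta_r=0$ requires $q,r \in [\ell+w,2\ell]$, which forces $p \in [w-\ell,\,\ell-w]$ — so the "Cauchy-type" identity $\xi_a+\xi_b=\xi_{a+b-(3\ell+w)}$ is only derivable in this form when $w \lesssim \ell/2$. You acknowledge this by deferring "a uniform verification across $w\in\{1,\dots,2\ell\}$" with "careful tracking of index ranges," but that verification is precisely the substantive work. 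The paper carries it out with an explicit case analysis ($\rho\geq\ell$, $0<\rho<\ell$, $\rho=0$), in the first two cases writing down small triangular subsystems (e.g.\ the three equations at $(0,0,3\ell-\rho)$, $(0,3\ell-\rho,0)$, $(3\ell-\rho,0,0)$ for $\rho\geq\ell$) and zeroing out one diagonal of entries at a time; for $0<\rho<\ell$ the mechanism is different from your antisymmetry/Cauchy scheme and proceeds by an induction on the distance from the boundary. So: right decomposition, right target, correct weight-zero computation, but the nonzero-weight case as written has a genuine gap that the paper's case analysis fills.
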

\begin{proof}
The Theorem of semicontinuity of dimension of the fiber (see e.g., \cite[Thm. 
1.25]{shaf}) implies that $\dim \frakg_T$ is an upper semicontinuous function. 
In particular, it suffices to prove the statement for a single element $T$ with 
support $\calS_{t\text{-}max,m}$. Suppose that the coefficients of $T$ are 
$T^{ijk} = 1$ for every $(i,j,k) \in \calS_{t\text{-}max,m}$.

We give the proof in the case $m = 2\ell +1$ odd. If $m$ is even, the argument 
is essentially the same, with minor modifications to the index ranges.

Let $d\Phi : \frakgl(A) \oplus \frakgl(B) \oplus \frakgl(C) \to \frakgl(A 
\otimes B \otimes C)$ be the differential of the map $\Phi$ defined at the 
beginning of Section \ref{subsec: tight tensors}. We show that the annihilator of 
$T$ under the action of $ \frakgl(A) \oplus \frakgl(B) \oplus \frakgl(C)$ has 
dimension $3$, and coincides with $\langle L \rangle + \ker (d\Phi)$.

Let $(U,V,W) \in \frakgl(A) \oplus \frakgl(B) \oplus \frakgl(C)$; set $u^i_{i'} 
= 0$ if $i,i' \notin \{ 0 \vvirg 2\ell\}$ and similarly for $v^{j}_{j'}$ and 
$w^k_{k'}$.  Suppose $(U,V,W) \in \frakg_T$, so that every triple $(i,j,k)$ 
provides a (possibly trivial) equation on the entries of $U,V,W$ as follows
\begin{equation}\label{eqn: max support coeff equation}
(i,j,k) \qquad u^{i'}_i + v^{j'}_j + w^{k'}_k = 0
\end{equation}
where $i',j',k'$ are the only integers such that $i' + j + k = i + j' + k = i + 
j+ k' = 3\ell$. Let $\rho = 3\ell - (i+j+k)$; moreover $\rho\in \{ -2\ell \vvirg 
2\ell\}$ and $\rho = i'-i = j'-j = k'-k$. The equations in \eqref{eqn: max 
support coeff equation} can be partitioned into $4\ell+1$ subsets, indexed by 
$\rho = -2\ell \vvirg 2\ell$, so that equations in distinct subsets involve 
disjoint sets of variables. Our goal is to show that the $\rho$-th set of 
equations has no nontrivial solutions if $\rho \neq 0$, whereas the $0$-th set 
of equations has exactly a space of solutions of dimension $3$ which induces 
$(U,V,W) \in \langle L \rangle + \ker( d\Phi)$. Indeed, notice that $(U,V,W) \in 
\langle L \rangle + \ker( d\Phi)$ satisfies all equations in \eqref{eqn: max 
support coeff equation}.

We consider three separate cases: $\rho = 0, 0< \rho < \ell$  and $\rho \geq  
\ell$. The cases $ 0 > \rho > -\ell$ and $\rho \leq  -\ell$ are analogous.

{\bf Case $\rho \geq \ell$.} First, observe that $u^{\rho}_0 = v^{\rho}_0 = 
w^{\rho}_0 =0$. To show this, consider the three equations corresponding to 
$(i,j,k) = (0,0,3\ell-\rho), (0,3\ell-\rho,0)$ and $(3\ell-\rho,0,0)$, which 
give the linear system
\begin{equation}\label{eqn: lin system rho 0}
\left\{\begin{array}{cccccc}
u^{\rho}_0 &+ & v^{\rho}_0 & & & = 0 \\
u^{\rho}_0  &+ &  & & w^{\rho}_0 & = 0 \\
& & v^{\rho}_0  &+ & w^{\rho}_0  & = 0 \\
\end{array} \right.
\end{equation}
in the three unknowns $u^{\rho}_0, v^{\rho}_0 ,  w^{\rho}_0$; this linear system 
has full rank. This shows $u^i_0 = v^j _ 0 = w^k _0 = 0$ if $i,j,k \geq \ell$.

Now fix $q$ with $ \ell > q \geq 1$; we show that $u^{\rho + q}_q = v^{\rho  + 
q}_q = w^{\rho+q}_q=0$. The equation corresponding to $(i,j,k) = (q,0,3\ell 
-\rho  - q )$ is $u^{\rho + q}_q  + v^{\rho}_0  = 0$, which provides $u^{\rho + 
q}_q = 0$ since $v^{\rho}_0 = 0$; similarly $v^{\rho + q}_q = w^{\rho + q}_q = 
0$. If $q = \ell$, then $\rho = \ell$ as well (otherwise $u^{\ell+q}_q$ is 
trivially $0$ because $\ell+q > 2\ell$). In this case, the equations 
corresponding to $(i,j,k) = (0,\ell,\ell),(\ell,0,\ell), (\ell,\ell,0)$ provide 
a linear system similar to \eqref{eqn: lin system rho 0} which provides 
$u^{2\ell}_\ell =
v^{2\ell}_\ell = w^{2\ell}_\ell = 0$.

Apply a similar argument to the case $\rho \leq - \ell$.

{\bf Case $0< \rho < \ell$.} We have $\ell \geq 2$, otherwise this case does not 
occur. First, we show that $u^{2\ell}_{2\ell - \rho} = v^{2\ell}_{2\ell - \rho} 
= w^{2\ell}_{2\ell - \rho} = 0$. This is obtained in two steps. First consider 
the three equations corresponding to the indices $(2\ell - \rho +1, \ell-1,0), 
(2\ell - \rho +2, \ell-2,0),(2\ell - \rho +1, \ell-2,1)$, which are 
\begin{equation}\label{eqn: three eqns in V and W}
\begin{aligned}
& v^{\ell-1+\rho}_{\ell-1} + w^{\rho}_0 = 0 ,\\
& v^{\ell-2+\rho}_{\ell-2} + w^{\rho}_0 = 0 , \\
& v^{\ell-2+\rho}_{\ell-2} + w^{\rho+1}_1 = 0 ;\\
\end{aligned}
\end{equation}
these provide $ v^{\ell-1+\rho}_{\ell-1} + w^{\rho+1}_1 = 0$. Now  the equation 
corresponding to the indices $(2\ell - \rho,\ell-1,1)$, namely 
$u^{2\ell}_{2\ell-\rho} + v^{\ell-1+\rho}_{\ell-1} + w^{\rho+1}_1 = 0$, reduces 
to $u^{2\ell}_{2\ell-\rho} = 0$; similarly, we have $v^{2\ell}_{2\ell-\rho} = 
w^{2\ell}_{2\ell-\rho} = 0$.

This provides the base case for an induction argument. If $q \geq 1$, we show 
that $u^{2\ell - q}_{2\ell - q - \rho} = 0$. This argument is similar to the one 
before: the three equations corresponding to $(2\ell - \rho - (q-1) , \ell+ 
(q-1) ,0), (2\ell - \rho  +1 -(q-1), \ell-1 +(q-1),0),(2\ell - \rho -(q-1) , 
\ell-1 + (q-1),1)$, together with the induction hypothesis, reduce to 
$v^{\ell+(q-1)+\rho}_{\ell+(q-1)} + w^{\rho+1}_1 = 0$. The latter equality, 
together with the equation corresponding to the indices $(2\ell - q - \rho 
,\ell+(q-1),1)$ gives $u^{2\ell-q}_{2\ell-q-\rho} = 0$. Similarly, we have 
$v^{2\ell-q}_{2\ell-q-\rho}$ for every $q = 0 \vvirg
2\ell - \rho$. We conclude that $u^{i+\rho}_{i} = v^{j+\rho}_{j} = 
w^{k+\rho}_{k} = 0$ for every $i,j,k$ and every $0< \rho < \ell$.

Apply a similar argument to the case $0> \rho >- \ell$.

{\bf Case $\rho = 0$.} We may work modulo $\ker (d\Phi) = \langle (\Id_A , - 
\Id_B ,0) , (\Id_A, 0 , -\Id_C)\rangle$. In particular, we may assume $V,W$ 
satisfy $\trace(V) = \trace(W) = 0$. Consider all equations $(\ell, j ,k )$ so 
that $j + k = 2\ell$. Adding them up and using the traceless condition, we have 
$u^\ell _\ell = 0$ and therefore $v^{\ell+q}_{\ell+q} = - w^{\ell-q}_{\ell - q}$ 
for $q = - \ell \vvirg \ell$. Let $\xi = u^{\ell +1}_{\ell +1}$. Then for every 
$q$, the equation $(\ell+1, \ell + q-1, \ell -q)$ gives $v^{\ell + 
q-1}_{\ell+q-1} + \xi = - w ^{\ell-q}_{\ell-q} = v^{\ell+q}_{\ell+q}$, so that 
one has $v^{\ell+q}_{\ell+q} = v^\ell_\ell + q\xi$ and similarly 
$w^{\ell+q}_{\ell+q} = w^\ell_\ell + q\xi$. Since $V$ and $W$ are traceless, we 
obtain $v^\ell_\ell = w^\ell_\ell = 0$ and $v^{\ell+q}_{\ell+q} = 
w^{\ell+q}_{\ell+q} = q\xi$. In particular, by adding up the equations for the 
form $(\ell+q,\ell-q,\ell)$ for $q = -\ell \vvirg \ell$, we observe that $U$ is 
traceless as well, and by a 
similar argument $u^{\ell+q}_{\ell+q} = q\xi$ as well, so that $(U,V,W) = L$. 
This shows that modulo $\ker (d \Phi)$ we have $\frakg_T = \langle L \rangle$, 
and this concludes the proof.
\end{proof}

\subsection{Oblique tensors}

Recall that a tensor $T$ is oblique if there are bases such that $\supp(T)$ is 
an antichain in $[\bfa]\times [\bfb]\times [\bfc]$  under the partial ordering 
induced by three total orders on $[\bfa], [\bfb], [\bfc]$. The original 
definition of oblique considers the three sets $[\bfa], [\bfb], [\bfc]$ with the 
natural ordering induced by $\bbN$. Our definition allows reordering in the 
index ranges of each factor: this does not affect the resulting class of 
tensors, and provides the following useful fact.

\begin{remark}\label{rmk: tight is oblique}
 Every tight set is oblique. Let $\calS \subseteq [\bfa]\times [\bfb]\times 
[\bfc]$ be a tight set. After permuting the elements of $[\bfa],[\bfb],[\bfc]$, 
we may assume that $\tau_A,\tau_B,\tau_C$ are strictly increasing. Suppose 
$\calS$ is not an antichain in $[\bfa]\times [\bfb]\times [\bfc]$ and let 
$(i_1,j_1,k_1), (i_2,j_2,k_2) \in \calS$ distinct such that $i_1 \leq i_2, j_1 
\leq j_2, k_1 \leq k_2$, with at least one strict inequality. Therefore 
$\tau_A(i_1) + \tau_B(j_1) + \tau_C(k_1) < \tau_A(i_2) + \tau_B(j_2) + 
\tau_C(k_2)$, in contradiction with the assumption that $\calS$ is tight.
\end{remark}

In order to give some insights on oblique subsets, we introduce terminology from 
\cite{Proc}. To avoid confusion with tensor rank, we  use \lq\lq poset rank\rq\rq\ where
Proctor uses \lq\lq rank\rq\rq .

\begin{definition}[{\bf \cite{Proc}}]
Let $(P, \prec)$ be a poset and let $x,y \in P$. The element $x$ {\it covers} 
$y$ if $y \prec x$ and there does not exist $z\in P$ such that $y \prec z \prec 
x$. A {\it ranked poset} $P$ of {\it length $r$} is a poset $P$ with a partition 
$P = \bigsqcup_{i=0}^{r} P_i$ into $r+1$ {\it poset ranks} $P_i$, such that elements 
in $P_i$ cover only elements in $P_{i-1}$. (Note that the elements in $P_i$ do not have to cover all the elements in $P_{i-1}$.) A ranked poset of length $r$ is {\it 
poset rank symmetric} if $|P_i| = |P_{r-i}|$ for $1\leq i < r/2$. It is {\it poset rank 
unimodal} if $|P_1| \leq |P_2| \leq \cdots \leq |P_{h_0}| $ and $|P_{h_0}| \geq 
|P_{h_0+1}| \geq \cdots \geq |P_{r+1}|$, for some $1\leq h_0 \leq r+1$.

 A poset is {\it Peck} if it is poset rank symmetric, poset rank unimodal and for every 
$\ell\geq 1$ no union of $\ell$ antichains contains more elements than the union 
of the $\ell$ largest poset ranks of $P$.
\end{definition}

\begin{example}
For every ${\bf a}$, the poset $[{\bf a}]$ is ranked of length ${\bf a}-1$ and 
it is Peck.
\end{example}

Using representation-theoretic methods, Proctor \cite[Thm.  2]{Proc} showed that 
products of Peck posets are Peck posets, with respect to the natural 
\emph{product ordering} and with poset rank function defined by the sum of the poset rank 
functions of the factors; in particular $[{\bf a}]\times [{\bf b}]\times [{\bf 
c}]$ is Peck according to the induced partial ordering on the product and the 
poset rank function is given by $h(i,j,k) = i+j+k$.
 
\begin{remark}\label{rmk: oblique when tight}
Oblique supports entirely contained in a single poset rank are tight. More explicitly, 
let $P = [{\bf a}]\times [{\bf b}]\times [{\bf c}]$. Every oblique tensor $T$ 
whose support $\cS_{T}$ is an antichain in some poset rank $P_h$ of $P$ is tight. In 
particular $\calS_{t\text{-}max,m}$ coincides with $P_{3\ell}$, with $m = 
2\ell+1$ or $m = 2\ell$; using Proctor's terminology, this corresponds to the 
$\fraksl_2$-weight space of weight $0$ in the representation $\bbC^P = \bbC^\bfa 
\otimes \bbC^\bfb \otimes \bbC^{\bfc}$ where the factors are regarded as 
irreducible $\fraksl_2$-representations. 
\end{remark}

The following is a slightly stronger version of Theorem \ref{thm: maximum 
support}(ii): 

\begin{theorem}\label{maxsuppoblique}
 Let $\aaa\leq \bbb\leq \ccc$ and let $\cS\subset [\aaa]\times[\bbb]\times 
[\ccc]$ be oblique. Then
\[
\vert \calS \vert \leq \left\{
\begin{array}{ll} 
{\bfa}{\bfb} - \lfloor \frac{({\bfa}+{\bfb}-{\bfc})^2}{4} \rfloor & \text{if 
$\bfa + \bfb \geq \bfc$} \\ 
\bfa\bfb &  \text{if $\bfa + \bfb \leq \bfc$}.
\end{array} \right.
\]
Moreover, in all cases there exist $\calS$ such that equality holds.
\end{theorem}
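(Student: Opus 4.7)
The plan is to realize the claimed bound as the size of the largest rank of the Peck poset $[\bfa]\times[\bfb]\times[\bfc]$. By Proctor's theorem cited in the preceding discussion, this poset with rank function $h(i,j,k)=i+j+k$ is Peck. Specializing the Peck condition to $\ell=1$ gives that every antichain has cardinality at most that of the largest rank, and rank-unimodality together with rank-symmetry identify that rank as $P_{h_0}$ with $h_0$ the middle value of $h$. Thus the upper bound reduces to computing $|P_{h_0}|$; sharpness is then immediate by taking $\calS = P_{h_0}$ itself, which is an antichain because two distinct triples with the same coordinate sum cannot be comparable coordinatewise.

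To evaluate $|P_{h_0}|$ I would project $P_{h_0}$ to $[\bfa]\times[\bfb]$: the fiber over $(i,j)$ consists of the single point $(i,j,h_0-i-j)$ when $h_0-i-j$ lies in the admissible $k$-range, and is empty otherwise. Hence $|P_{h_0}|$ equals the number of pairs $(i,j)\in[\bfa]\times[\bfb]$ with $i+j$ in a window of $\bfc$ consecutive values centered at the middle of the rank-size sequence of $[\bfa]\times[\bfb]$. That sequence reads $1,2,\ldots,\bfa-1,\bfa,\bfa,\ldots,\bfa,\bfa-1,\ldots,2,1$: a plateau of length $\bfb-\bfa+1$ inside $\bfa+\bfb-1$ ranks summing to $\bfa\bfb$. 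If $\bfa+\bfb\leq\bfc$ the window absorbs every rank, giving $|P_{h_0}|=\bfa\bfb$. Otherwise, with $t:=\bfa+\bfb-\bfc>0$, the window excludes $t-1$ symmetrically distributed end ranks of sizes $1,\ldots,\lceil(t-1)/2\rceil$ on one side and $1,\ldots,\lfloor(t-1)/2\rfloor$ on the other; a short parity check in $t$ shows that these sum to $\lfloor t^2/4\rfloor$, yielding $|P_{h_0}|=\bfa\bfb-\lfloor t^2/4\rfloor$.

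The conceptual work is done by invoking Proctor's theorem; the remainder is elementary bookkeeping. The one point requiring care is that the $t-1$ excluded rank sizes should lie on the triangular arms of the rank-size sequence rather than inside the plateau, i.e., $\lceil(t-1)/2\rceil\leq\bfa-1$; this is ensured by $t\leq\bfa$, which follows from the standing assumption $\bfb\leq\bfc$. Thus the main obstacle is less conceptual than notational: carrying out the parity-based identification of the excluded total as $\lfloor t^2/4\rfloor$ while keeping the indexing of ranks for $[\bfa]\times[\bfb]$ and for $[\bfa]\times[\bfb]\times[\bfc]$ consistent.
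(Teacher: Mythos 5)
Your proposal is correct and follows essentially the same route as the paper: both invoke Proctor's Peck theorem to bound the antichain by the size of the middle rank $P_{h_0}$, and both compute $|P_{h_0}|$ by projecting onto $[\bfa]\times[\bfb]$ and counting (the paper enumerates the excluded pairs $(i,j)$ directly via the two inequality conditions and the binomials $\binom{\theta+2}{2}+\binom{\eta+2}{2}$; your rank-size-sequence framing is the same count in different clothing). Sharpness by taking $\calS = P_{h_0}$ is implicit in the paper and explicit in your write-up, and your check that the excluded ranks lie on the triangular arms ($t\leq\bfa$ from $\bfb\leq\bfc$) is the same hidden assumption the paper uses without comment.
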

\begin{proof}
 Since $P = [\bfa] \times [\bfb] \times [\bfc]$ is Peck, the cardinality of a 
maximal antichain is upper bounded by the maximal poset rank subset: since a Peck set 
is unimodular, the maximal poset rank is the central one, namely $P_{h_{max}} = \{ 
(i,j,k) : i + j + k = h_{max} \}$ where $h _{max} = \lfloor \frac{\bfa + \bfb + 
\bfc - 3}{2} \rfloor$ (and equivalently $\lceil \frac{\bfa + \bfb + \bfc - 3}{2} 
\rceil$).

If $\bfa + \bfb < \bfc$ then for every $(i,j) \in [\bfa] \times [\bfb]$ there 
exists $k \in [\bfc]$ such that $i+j+k = h_{max}$, so $\vert P _ {h_{max}} \vert 
= \bfa \bfb$ and the statement of the theorem holds. 

Now suppose $\bfa + \bfb \geq \bfc$. Let $\psi: P \rightarrow [{\bfa}] \times 
[{\bf b}]$ be the projection onto the first two factors. Note that $\psi$ 
restricted to each $P_h$ is injective because $P_h$ is an antichain. Then $\vert 
P_{h_{max}} \vert  = \vert \psi(P_{h_{max}})\vert$. We compute the number of 
elements of $\psi(P_{h_{max}})$. Consider its complement in $[\bfa ] \times 
[\bfb]$, that is the set of pairs $(i,j) \in [{\bfa}]\times [{\bfb}]$ for which 
there is no $k\in [\bfc]$ with $i+j+k = h_{{max}}$. Since $0 \leq k \leq 
\bfc-1$, these are exactly pairs $(i,j)$ satisfying one of the following 
conditions:
\begin{enumerate}
\item[(i)] $i + j \leq h_{max} - (\bfc - 1) - 1$, that is $i+j \leq \lfloor 
\frac{\bfa + \bfb - \bfc - 3}{2} \rfloor$;
\item[(ii)] $ h_{max} \leq i+j-1$, that is $\lfloor \frac{\bfa + \bfb+ 
\bfc-1}{2} \rfloor \leq i+j $.
\end{enumerate}

Notice that (i) and (ii) are mutually exclusive. Let $\theta = \lfloor 
\frac{\bfa + \bfb - \bfc - 3}{2} \rfloor$. For every $i = 0 \vvirg \theta$, and 
every $j = 0 \vvirg \theta - i$, we have $i+j \leq \theta$; this gives $1 + 2 + 
\cdots + (\theta+1) = \binom{\theta + 2}{2}$ pairs $(i,j)$ satisfying condition 
(i). Now, let $i ' = \bfa - 1 - i$ and $j' = \bfb - 1 - j$: condition (ii) can 
be rephrased as $\bfa + \bfb - 2 - \lfloor \frac{\bfa + \bfb+ \bfc-1}{2} \rfloor 
\geq i' + j'$ which in turn becomes $i' + j' \leq \eta$ where $\eta = \lceil 
\frac{\bfa + \bfb - \bfc - 3}{2} \rceil$; this provides $\binom{\eta+2}{2}$ 
pairs $(i',j')$ which correspond to $\binom{\eta+2}{2}$ pairs $(i,j)$ satisfying 
(ii). We conclude that the complement of $\psi(P_{h_{max}})$ in $[\bfa] \times 
[\bfb]$ consists of $\binom{\theta+2}{2} + \binom{\eta+2}{2}$ elements. To 
conclude, observe $\binom{\theta+2}{2} + \binom{\eta+2}{2} = \lfloor 
\frac{({\bfa}+{\bfb}-{\bfc})^2}{4} \rfloor$.
\end{proof}

\begin{remark} The above proof is modeled on the proof of  \cite[Thm. 
6.6]{MR882307}.
\end{remark}

Choosing $\bfa = \bfb = \bfc = m$ in Theorem \ref{maxsuppoblique}, one obtains 
the bound of Theorem \ref{thm: maximum support}(ii). Since every tight tensor is 
oblique, the same bound holds for tight tensors. Since $\calS_{t\text{-}max,m}$ 
from Example \ref{example: max tight support} is a tight support of cardinality 
$\lceil \frac{3}{4}m^2 \rceil$ (which in fact corresponds to a maximal antichain 
as observed in Remark \ref{rmk: oblique when tight}), we obtain that the bound 
is sharp both in the oblique and in the tight case.

\subsection{Free tensors}

We recall that a subset $\calS \subseteq [\bfa] \times [\bfb]\times [\bfc]$ is 
free if any two triples $(i,j,k)$, $(i',j',k')$ in $\calS$ differ on at least 
two entries.  

\begin{remark}\label{rmk: oblique is free}
Every oblique support is free. Let $\calS$ be an oblique support and suppose it 
is not free. Without loss of generality, $\calS$ contains two triples of the 
form $s_1 = (i,j,k_1)$ and $s_2 = (i,j,k_2)$ for some $k_1,k_2$. But then, if 
$k_1 \leq k_2$ then $s_1 \leq s_2$ and if $k_2 \leq k_1$ then $s_2 \leq s_1$, 
therefore $\calS$ is not an antichain, providing a contradiction.
\end{remark}

\begin{example}[A free support of cardinality $m^2$]\label{example: maximal free 
support}
We obtain a free support of cardinality $m^2$ by completing the support 
$\calS_{t\text{-}max,m}$ in a circulant way. More precisely, let $m \geq 0$ be 
odd with $m = 2\ell+1$. Define 
\[
\calS_{f\text{-}max,m} = \{(i,j,k) : i+j +k \equiv \ell \mod m \} \subseteq 
[m]\times [m]\times [m].
\]
Notice that in the range where $ \ell \leq j+k <3\ell$, then $i = 0 \vvirg 
2\ell$ with $i+j+k = 3\ell$, recovering the structure of 
$\calS_{t\text{-}max,m}$.
\end{example}

It is immediate from the definition that the cardinality of a free support is at 
most $m^2$: indeed, any $m^2 + 1$ elements would have at least two triples 
$(i,j,k)$ with the same $(i,j)$. This observation, together with Example 
\ref{example: maximal free support}, completes the proof of Theorem \ref{thm: 
maximum support}(iii).

\subsection{Proof of Theorem \ref{thm: dimension of classes of tensors}}

We first describe the general construction that will be used in the proof.

Fix a vector space $V$ and let $1\leq \kappa \leq \dim V-1$. Let $G(\kappa,V)$ 
denote the Grassmannian of $\kappa$-planes through the origin in $V$ and let 
$\pi_G: \calT \to G(\kappa,V)$ denote the tautological subspace bundle of 
$G(\kappa,V)$, i.e., the vector bundle whose fiber over a $\kappa$-dimensional 
plane $E \in G(\kappa,V)$ is $E$ itself. Let $\pi_V: \calT \to V$ denote the 
projection to $V$, that is $\pi_V : (E, v) \mapsto v$ for every $E \in 
G(\kappa,V)$ and $v \in E \subseteq V$.

If $Z\subset G(\kappa,V)$ is a subvariety of dimension $z$, then $\dim 
\pi_G^{-1}(Z)=z+\kappa$.  Consequently,  $\dim \pi_V(\pi_G^{-1}(Z))\leq 
z+\kappa$. The action of a group on $V$ naturally induces an action on 
$G(\kappa,V)$ and the vector bundle $\calT$ can be restricted to orbits and 
orbit-closures of such an action.

We will use this construction in the setting where $V = A \otimes B \otimes C$, 
and $Z$ is the $GL(A) \times GL(B) \times GL(C)$-orbit closure of the linear 
space consisting of all tensors with a given support; we refer to such linear 
space as the \emph{span of a support}.

The variety $\bar{\Tight}_m$, (resp. $\bar{\Oblique}_m$, $\bar{\Free}_m$) is a 
union of subvarieties of the form $\pi_V(\pi_G^{-1}(Z))$ with 
$Z=\bar{GL(A)\times GL(B)\times GL(C)\cdot E}$ and  $E$ is the span of a tight 
(resp. oblique, free) support in some given bases, regarded as an element of 
$G(\dim E, A \otimes B \otimes C)$. In particular, we have the following

\begin{lemma}\label{lemma: dimension Tight via fiber}
\[
 \dim \bar{\Tight}_m = \max \left\{ \dim \pi_V(\pi_G^{-1}(Z)) : 
\begin{array}{ll} Z = \bar{GL(A)\times GL(B)\times GL(C)\cdot E} \\ \text{ for 
some $E \in G(\kappa, A \ot B \ot C)$ span of a tight support} \end{array} 
\right\},  
\]
and similarly for $\bar{\Oblique}_m$ and $\bar{\Free}_m$.
\end{lemma}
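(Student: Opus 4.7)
The plan is to unpack the definitions and match them against the incidence construction. The set $\Tight_m$ of concise tight tensors is, by Definition \ref{defin: tight oblique and free}, the set of tensors $T$ such that, in some choice of bases of $A,B,C$, the support of $T$ is a tight subset $\calS \subseteq [m]\times[m]\times[m]$. Fix once and for all reference bases; then changes of basis correspond to the action of $G = GL(A)\times GL(B)\times GL(C)$ on $A\otimes B\otimes C$. For a fixed subset $\calS$, let $E_\calS \subseteq A\otimes B\otimes C$ be the coordinate subspace spanned by the $a_i\otimes b_j\otimes c_k$ with $(i,j,k)\in\calS$; this is the ``span of $\calS$'' in the sense introduced before the lemma, and $\dim E_\calS = |\calS|$. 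A tensor $T$ has support contained in $\calS$ in the reference bases iff $T \in E_\calS$, and it has support a tight set in some bases iff $T \in g\cdot E_\calS$ for some $g\in G$ and some tight $\calS$.

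Next I would observe the identification
\[
\bigcup_{g\in G} g\cdot E_\calS \;=\; \pi_V\bigl(\pi_G^{-1}(G\cdot E_\calS)\bigr),
\]
viewing $E_\calS$ as a point of $G(\kappa, A\otimes B\otimes C)$ with $\kappa = |\calS|$ and $G\cdot E_\calS$ as its orbit in this Grassmannian. This is just the description of the tautological bundle: the fiber of $\pi_G$ over $g\cdot E_\calS$ is the subspace $g\cdot E_\calS$ itself, and $\pi_V$ sends it into $A\otimes B\otimes C$. Hence
\[
\Tight_m \;=\; \bigcup_{\calS \text{ tight}} \pi_V\bigl(\pi_G^{-1}(G\cdot E_\calS)\bigr).
\]
The outer union is finite because there are only finitely many subsets $\calS \subseteq [m]\times[m]\times[m]$.

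Taking Zariski closures and using that closure commutes with finite unions gives $\bar{\Tight}_m = \bigcup_\calS \overline{\pi_V(\pi_G^{-1}(G\cdot E_\calS))}$. For each $\calS$, the orbit $G\cdot E_\calS \subseteq G(\kappa, A\otimes B\otimes C)$ is a constructible set whose Zariski closure is the irreducible variety $Z := \bar{G\cdot E_\calS}$, and $\dim G\cdot E_\calS = \dim Z$; the restricted tautological bundle $\pi_G^{-1}(Z)$ is then a vector bundle over $Z$ of rank $\kappa$, irreducible of dimension $\dim Z + \kappa$, containing $\pi_G^{-1}(G\cdot E_\calS)$ as a dense open subset. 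Consequently $\pi_V(\pi_G^{-1}(G\cdot E_\calS))$ and $\pi_V(\pi_G^{-1}(Z))$ have the same dimension, namely the generic fiber dimension subtracted from $\dim Z + \kappa$. Thus
\[
\dim \overline{\pi_V(\pi_G^{-1}(G\cdot E_\calS))} \;=\; \dim \pi_V(\pi_G^{-1}(Z)).
\]
Taking the maximum over the finitely many tight supports $\calS$ yields the stated formula for $\dim \bar{\Tight}_m$. The argument for $\bar{\Oblique}_m$ and $\bar{\Free}_m$ is verbatim the same, replacing ``tight'' by ``oblique'' or ``free'' in the range of the outer union.

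The only technical point to be careful with is the passage from $G\cdot E_\calS$ to $Z = \bar{G\cdot E_\calS}$ inside the Grassmannian: one must verify that the dimension of the image under $\pi_V$ is unchanged, which follows because $\pi_G^{-1}(G\cdot E_\calS)$ is dense in the irreducible variety $\pi_G^{-1}(Z)$. Everything else is bookkeeping of definitions.
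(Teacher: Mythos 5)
Your proof is correct and follows essentially the same route as the paper's: both decompose $\Tight_m$ as a finite union over tight supports $\calS$ (in fixed reference bases) of $G$-orbits of the coordinate subspaces $E_\calS$, identify each such piece with the image of the restricted tautological bundle under $\pi_V$, and take the maximum dimension over the finitely many supports. You fill in more detail on passing from the orbit to its closure and on why this preserves the image dimension, which the paper leaves implicit; this is correct bookkeeping, not a new idea.

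One tiny imprecision worth noting: the displayed identity $\Tight_m = \bigcup_\calS \pi_V\bigl(\pi_G^{-1}(G\cdot E_\calS)\bigr)$ should really be a containment $\subseteq$, since a tensor in $g\cdot E_\calS$ may have support strictly smaller than $\calS$ and fail conciseness. Since concise tensors are dense in each $E_\calS$ (tight supports are concise by definition), the closures and hence the dimensions agree, so your conclusion is unaffected.
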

\begin{proof}
Every tight tensor is in the $GL(A) \times GL(B) \times GL(C)$ orbit of a tight 
tensor in a fixed basis. Moreover, the number of tight supports in a fixed basis 
is finite. This implies that the irreducible components of the variety 
$\bar{\Tight}_m$ have the form $\pi_V(\pi_G^{-1}(Z))$, where $ Z= 
\bar{GL(A)\times GL(B)\times GL(C)\cdot E}$ for some linear space $E$ which is 
the span of a non-extendable tight support. 

Since the number of supports is finite, $\dim \bar{\Tight}_m$ is just the 
dimension of the largest orbits.

The same holds for $\bar{\Oblique}_m$ and $\bar{\Free}_m$.
\end{proof}

The following lemma gives the dimension of the orbit closure of the span of a 
concise free support $E$. Since from Remark \ref{rmk: tight is oblique} every 
tight support is oblique (up to reordering the bases) and from Remark \ref{rmk: 
oblique is free} every oblique support is free, the same result applies to tight 
and oblique supports. 
\begin{lemma} \label{lemma: annihilator of free concise support}
Let $E\in G(\kappa,A\ot B\ot C)$ be the span of a concise free support and let 
$Z = \bar{GL(A)\times GL(B)\times GL(C)\cdot E} \subseteq G(\kappa , A \otimes B 
\otimes C)$. Then $\dim Z = \aaa^2+\bbb^2+\ccc^2- (\aaa+\bbb+\ccc)$.
\end{lemma}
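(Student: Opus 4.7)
The plan is to compute $\dim Z$ via the orbit-stabilizer principle on the Grassmannian: since $Z$ is the closure of the $G$-orbit of $E$ under $G = GL(A) \times GL(B) \times GL(C)$, we have $\dim Z = \dim G - \dim G_E$, where $G_E$ is the stabilizer of $E$ as a point of $G(\kappa, A \otimes B \otimes C)$. As $\dim G = \bfa^2 + \bfb^2 + \bfc^2$, it suffices to show that $\dim G_E = \bfa + \bfb + \bfc$. I would work at the Lie algebra level, identifying $\mathrm{Lie}(G_E)$ with the set of triples $(X, Y, W) \in \frakgl(A) \oplus \frakgl(B) \oplus \frakgl(C)$ whose induced derivation of $A \otimes B \otimes C$ sends $E$ into itself.

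For the lower bound, fix the basis $\{a_i\}, \{b_j\}, \{c_k\}$ in which $E$ is the span of $\{a_i \otimes b_j \otimes c_k : (i,j,k) \in \calS\}$ for a free concise support $\calS$. The diagonal matrices in $\frakgl(A) \oplus \frakgl(B) \oplus \frakgl(C)$ preserve the entire grading of $A \otimes B \otimes C$ by these pure tensors, hence preserve $E$, giving an $(\bfa + \bfb + \bfc)$-dimensional subspace of $\mathrm{Lie}(G_E)$.

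For the matching upper bound, I would evaluate an arbitrary $(X, Y, W) \in \mathrm{Lie}(G_E)$ on a basis tensor indexed by $(i,j,k) \in \calS$, obtaining
\[
\sum_{i' \neq i} x^{i'}_i\, a_{i'} \otimes b_j \otimes c_k \;+\; \sum_{j' \neq j} y^{j'}_j\, a_i \otimes b_{j'} \otimes c_k \;+\; \sum_{k' \neq k} w^{k'}_k\, a_i \otimes b_j \otimes c_{k'} \pmod{E}.
\]
For this residue to vanish in $(A \otimes B \otimes C)/E$, every pure-tensor summand with nonzero coefficient must be indexed by an element of $\calS$. But freeness of $\calS$ forbids any triple $(i',j,k) \in \calS$ with $i' \neq i$ (since $(i,j,k)$ and $(i',j,k)$ would agree in two coordinates), and the analogous statement holds for the other two sums. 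So $x^{i'}_i = 0$ whenever $i$ occurs as a first coordinate in $\calS$, and then conciseness promotes this to every $i \in [\bfa]$; symmetric arguments force $Y$ and $W$ to be diagonal as well. Combining the two containments gives $\dim \mathrm{Lie}(G_E) = \bfa + \bfb + \bfc$.

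The argument is essentially a bookkeeping exercise once the definitions are unpacked; the only mild obstacle is making sure to invoke both freeness (to rule out each off-diagonal entry given a specific witness in $\calS$) and conciseness (to guarantee such witnesses exist for every index). Plugging this into the orbit-stabilizer formula yields $\dim Z = \bfa^2 + \bfb^2 + \bfc^2 - (\bfa + \bfb + \bfc)$, as claimed.
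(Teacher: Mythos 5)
Your proof is correct, and it takes a route that is dual to, but noticeably cleaner than, the paper's. The paper works inside the Plücker embedding and computes the affine tangent space $\hat{T}_E Z = (\frakgl(A)\oplus\frakgl(B)\oplus\frakgl(C)).E \subseteq \Lambda^\kappa(A\otimes B\otimes C)$: the upper bound comes from observing that diagonal triples map $E$ to a multiple of itself, and the lower bound is a somewhat delicate linear-independence argument showing that the images $L.E$, as $L$ ranges over single-off-diagonal-entry triples, are independent and span a complement to $\langle E\rangle$. You instead compute the \emph{kernel} of the same Lie algebra action on the Grassmannian point $E$, i.e., $\mathrm{Lie}(G_E)$, show it is exactly the triples of diagonal matrices, and conclude via orbit--stabilizer. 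By rank--nullity these two computations carry the same information, and the two key combinatorial inputs are identical in both: freeness forbids any $(i',j,k) \in \calS$ with $i'\neq i$ (so every off-diagonal summand of $L.(a_i\otimes b_j\otimes c_k)$ lands outside $E$), and conciseness guarantees that every index is the first (resp. second, third) coordinate of some support element, upgrading column-wise vanishing of $x^{i'}_i$ to $X$ being diagonal. Your version sidesteps the Plücker-coordinate bookkeeping and the linear-independence verification entirely, which is a genuine simplification; the only thing you should say explicitly (and which is standard over $\BC$) is that algebraic groups are smooth, so $\dim G_E = \dim\mathrm{Lie}(G_E)$, and that the orbit is open dense in $Z$ so $\dim Z$ equals the orbit dimension.
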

\begin{proof}
We show that the affine tangent space to $Z$ at $E$ in the Plucker embedding of 
$G(\kappa,A \otimes B \otimes C)$ in $\bbP \Lambda^\kappa (A \otimes B \otimes 
C)$ has dimension exactly $\aaa^2+\bbb^2+\ccc^2- (\aaa+\bbb+\ccc) + 1$; in the 
following, let $\hat{G}(\kappa,A\otimes B \otimes C) \subseteq 
\Lambda^\kappa(A\otimes B \otimes C)$ be the cone over $G(\kappa,A\otimes B 
\otimes C)$. The affine tangent space to $Z$ at $E$ is $\hat{T} _E Z = \left\{ 
\left( \frakgl(A) \oplus \frakgl(B) \oplus \frakgl(C) \right). E \right\}$, 
which is naturally a subspace of $\Lambda^\kappa (A \otimes B \otimes C)$. Here 
$E$ is identified with the element $\bigwedge_{s = 1}^\kappa (a_{i_s} \otimes 
b_{j_s} \otimes c_{k_s}) \in \hat{G}(\kappa,A \otimes B \otimes C)$, where 
$\{(i_s , j_s, k_s) : s = 1 \vvirg \kappa\}$ is the free support defining $E$. 

Let $(X,Y,Z) \in \frakgl(A) \oplus \frakgl(B) \oplus \frakgl(C)$ for three $m 
\times m$ matrices $X,Y,Z$. If $X,Y,Z$ are diagonal, then $(X,Y,Z) . E = E$ up 
to scale. Thus $\dim \hat{T} _E Z \leq \aaa^2+\bbb^2+\ccc^2- (\aaa+\bbb+\ccc) + 
1$. In order to show equality, it suffices to observe that the vectors of the 
form $(X,Y,Z) . E$, where $X,Y,Z$ are three matrices which are all $0$ except in 
a single off-diagonal entry in one of them, are linearly independent and span a 
subspace of $\Lambda^{ {\kappa}} (A \otimes B \otimes C)$ which does not contain 
$E$; in particular, such a subspace has dimension $\aaa^2+\bbb^2+\ccc^2- 
(\aaa+\bbb+\ccc) $.

For every $L = (X,Y,Z)$ having exactly one off-diagonal nonzero entry, we 
observe that $L.E \neq 0$ and that every summand in the expansion of $L. E$ as 
sum of basis vectors of $\Lambda^\kappa (A \otimes B \otimes C)$ differs from 
$E$ in exactly one factor: $L.E \neq 0$ follows immediately by freeness, while 
the second condition is realized whenever $E$ is spanned by basis vectors. In 
particular, the subspace of $\Lambda^\kappa(A \otimes B \otimes C)$  generated 
by the $L.E$'s does not contain $E$.

The same argument shows that the $L.E$'s are linearly independent. Indeed, 
suppose $L_1, L_2$ both have exactly one nonzero entry and suppose that $L_1.E$ 
and $L_2.E$ both have a summand $\Theta$ in their expansion as sum of basis 
vectors of $\Lambda^\kappa(A \otimes B \otimes C)$. Regard $\Theta$ as an 
element of $\hat{G}(\kappa,A \otimes B \otimes C)$ (it is the wedge product of a 
set of basis vectors), namely a coordinate $\kappa$-plane in $A \otimes B 
\otimes C$. There are exactly two basis elements $v = a_{i_0} \otimes b_{j_0} 
\otimes c_{k_0}$, $v'= a_{i_0'} \otimes b_{j_0'} \otimes c_{k_0'}$ such that $v 
\in E \setminus \Theta$ and $v' \in \Theta \setminus E$ and two of the three 
factors of $v$ coincide with the corresponding factors of $v'$. There is a 
unique element of $L \in \frakgl(A) + \frakgl(B) + \frakgl(C) $ having exactly 
one off-diagonal entry such that $L.v = v'$, which guarantees $L = L_1 = L_2$. 
In particular, all the $L.E$'s are linearly independent and this concludes the 
proof.
\end{proof}

In particular, from Lemma \ref{lemma: annihilator of free concise support}, one 
immediately obtains $\dim \pi_G^{-1}(Z)$ when $Z$ is the orbit-closure of the 
span of a concise free support $\calS$. If $\bfa=\bfb=\bfc = m$, we have 
\begin{equation}\label{eqn: dim preimage over grassmannian}
\dim \pi_G^{-1}(Z) = 3m^2 - 3m + \vert \calS \vert.
\end{equation}

Equation \eqref{eqn: dim preimage over grassmannian} guarantees that to prove 
Theorem \ref{thm: dimension of classes of tensors}, it suffices to determine a 
tight (resp. oblique, free) support $\calS$ such that $\vert \calS \vert = 
\lceil \frac{3}{4} m^2 \rceil$ (resp. $\lceil \frac{3}{4} m^2 \rceil$, $m^2$) 
and with the property that the projection $\pi_V : \calT\vert_Z \to A \otimes B 
\otimes C$ is generically finite-to-one. Indeed, if the projection $\pi_V$ is 
finite-to-one on $\calT\vert_Z = \pi_G^{-1}(Z)$, we have $\dim \pi_V ( 
\pi_G^{-1}(Z) ) = \dim \pi_G^{-1}(Z) = 3m^2 - 3m + \vert \calS \vert$ and 
considering $\calS$ with $\vert \calS \vert = \lceil \frac{3}{4} m^2 \rceil$ in 
the tight case, $\vert \calS \vert = \lceil \frac{3}{4} m^2 \rceil$ in the 
oblique case, and $\vert \calS \vert = m^2$ in the free case, via Lemma 
\ref{lemma: dimension Tight via fiber} we obtain the dimensions indicated in 
Theorem \ref{thm: dimension of classes of tensors}.

For the tight and oblique cases, we consider $\calS = \calS_{t\text{-}max,m}$ 
from Example \ref{example: max tight support}, and for the free case we consider 
$\calS = \calS_{f\text{-}max,m}$ from Example \ref{example: maximal free 
support}.

{\bf Tight and Oblique case.} Let $Z = \bar{GL(A) \times GL(B) \times GL(C) 
\cdot E} \subseteq G(  \lceil \frac{3}{4} m^2 \rceil, A \otimes B \otimes C )$ 
where $E = \langle \calS_{t\text{-}max,m} \rangle$ is the linear space of 
tensors supported at $\calS_{t\text{-}max,m}$. We prove that the fiber of 
$\pi_V$ at a generic point of $E$ is $0$-dimensional. From Proposition 
\ref{prop: generic stabilizer of T in max tight support}, we have $\dim G_T = 1$ 
and in particular the connected component of the identity in $G_T$ is a 
$1$-parameter subgroup which is diagonal in the fixed basis; let $\Gamma_E$ be 
this subgroup.

The fiber of $\pi_V|_{\pi^{-1}_G(Z)}$ over a tensor $T$ is the subset of 
$\calT\vert_Z$ defined by $Y_T = \{ (F,T) : F \in Z, T \in F\}$.  Our goal is to 
show that if $T$ is generic, then $Y_T$ is finite. If $(F,T) \in Y_T$, with $F 
\neq E$, then $F = gE$ for some $g= (g_A,g_B,g_C) \in GL(A) \times GL(B) \times 
GL(C)$. At least one of $g_A,g_B,g_C$ is not diagonal in the chosen basis, 
otherwise $gE = E$. The linear space $F$ is a tight support in the bases 
$g_A(a_i), g_B(b_j),g_C(c_k)$; in particular the one-parameter subgroup 
$\Gamma_F = g^{-1} \Gamma_E g $ stabilizes every tensor in $F$ and in particular 
$T$. We deduce $\Gamma_F \subseteq G_T$. Notice that $\Gamma_F \neq \Gamma_E$, 
because $\Gamma_F$ is not diagonal in the bases $a_i,b_j,c_k$. Now, $\Gamma_E$ 
and $\Gamma_F$ are two distinct $1$-parameter subgroups of $G_T$, which implies 
$\dim G_T \geq 2$, in contradiction with Proposition \ref{prop: generic 
stabilizer of T in max tight support}. This shows that $\pi_V|_{\pi_G^{-1}(Z)}$ 
is generically 
finite-to-one.

{\bf Free case.} Let $Z = \bar{GL(A) \times GL(B) \times GL(C) \cdot E} 
\subseteq G( m^2 ,A \otimes B \otimes C )$ where $E = \langle 
\calS_{f\text{-}max,m} \rangle$ is the linear space of tensors supported at 
$\calS_{f\text{-}max,m}$. Let $T$ be a tensor in $E$ such that $\supp(T) 
\subseteq \calS_{t\text{-}max,m} \subseteq \calS_{f\text{-}max,m}$. The tensor 
$T$ is tight and the same argument that we followed in the previous case shows 
that the fiber of $\pi_V$ is finite at $T$. By semicontinuity of dimension of 
the fibers (see e.g., \cite[Thm. 1.25]{shaf}), $\pi_V$ has $0$-dimensional fiber 
at the generic point of $E$ and therefore $\pi_V|_{\pi_G^{-1}(Z)}$ is 
generically finite-to-one.

Via equation \eqref{eqn: dim preimage over grassmannian}, we now conclude the 
proof of Theorem \ref{thm: dimension of classes of tensors}:
\[
\begin{array}{rll}
 \dim \bar{\Tight}_m &=  \dim \pi_V ( \pi^{-1}_G (Z) ) = \dim \pi^{-1}_G (Z)  & 
= 3m^2 - 3m + \vert \calS_{t\text{-}max,m} \vert  = \\
 & &= 3m^2 - 3m + \lceil \frac{3}{4} m^2 \rceil, \\
 \dim \bar{\Oblique}_m &=  \dim \pi_V ( \pi^{-1}_G (Z) ) = \dim \pi^{-1}_G (Z)  
&= 3m^2 - 3m + \vert \calS_{t\text{-}max,m} \vert =  \\ & & =3m^2 - 3m + \lceil 
\frac{3}{4} m^2 \rceil, \\
 \dim \bar{\Free}_m &=  \dim \pi_V ( \pi^{-1}_G (Z) ) = \dim \pi^{-1}_G (Z)  &= 
3m^2 - 3m + \vert \calS_{f\text{-}max,m} \vert =  \\ & & = 3m^2 - 3m + m^2. 
\end{array}
\]

\subsection{Tight, oblique and free in small dimension and inclusions among 
classes of tensors}
We saw that every tight tensor is oblique and every oblique tensor is free. 
The inclusions  $\bar{\Oblique}_m \subseteq \bar{\Free}_m$ are strict 
since the two varieties have different dimensions. The varieties $\bar{\Tight}_m$ 
and $\bar{\Oblique}_m$ have the same dimension. 

In this subsection  we show that $\bar{\Tight}_3 = \bar{\Oblique}_3$, and that  the inclusion 
$\bar{\Tight}_m \subseteq \bar{\Oblique}_m$ is strict for $m \geq 4$.

\begin{proof}[Proof of Theorem \ref{m3thm}]  The dimensions follow immediately from
Theorem \ref{thm: dimension of classes of tensors}, so it remains to prove $\bar{\Tight}_3 =\bar{\Oblique}_3$.
 This statement is proved via a computer calculation. There are $144$ maximal 
antichains in $[3] \times [3] \times [3]$; only $80$ of these are concise, in 
the sense that generic tensors with the corresponding support are concise. The 
group $\frakS_3 \times \bbZ_2$ acts on $[3] \times [3] \times [3]$, where 
$\frakS_3$ permutes the factors and $\bbZ_2$ maps $(i,j,k)$ to $(2-i,2-j,2-k)$. 
The induced action on subsets of $[3] \times [3]\times [3]$ preserves tight 
supports and antichains. In particular, without loss of generality, it suffices 
to prove the statement for an antichain in each orbit of $\frakS_3 \times 
\bbZ_2$. There are $13$ such orbits. The following are representatives for the 
orbits:
 \begin{align*}
\calS_{1} &= \{{(0,0,2), (0,1,1), (1,0,1), (2,2,0)}\}, \\
\calS_{2} &= \{{(0,0,2), (0,2,0), (1,1,1), (2,0,0)}\}, \\
\calS_{3} &= \{{(0,0,2), (0,2,1), (1,1,0), (2,0,1)}\}, \\
\calS_{4} &= \{{(0,0,2), (0,2,1), (1,2,0), (2,1,1)}\}, \\
\calS_{5} &= \{{(0,0,2), (0,1,1), (0,2,0), (1,0,1), (2,1,0)}\}, \\
\calS_{6} &= \{{(0,0,2), (0,1,1), (1,0,1), (1,2,0), (2,1,0)}\}, \\
\calS_{7} &= \{{(0,0,2), (0,1,1), (1,2,0), (2,0,1), (2,1,0)}\}, \\
\calS_{8} &= \{{(0,0,2), (0,2,0), (1,1,1), (2,0,1), (2,1,0)}\}, \\
\calS_{9} &= \{{(0,0,2), (0,2,1), (1,1,1), (2,0,1), (2,2,0)}\}, \\
\calS_{10} &= \{{(0,1,1), (1,0,2), (1,2,0), (2,0,1), (2,1,0)}\}, \\
\calS_{11} &= \{{(0,0,2), (0,1,1), (0,2,0), (1,0,1), (1,1,0), (2,0,0)}\}, \\
\calS_{12} &= \{{(0,0,2), (0,2,1), (1,1,1), (1,2,0), (2,0,1), (2,1,0)}\}, \\
\calS_{13} &= \{{(0,1,2), (0,2,1), (1,0,2), (1,1,1), (1,2,0), (2,0,1), 
(2,1,0)}\}. \\
\end{align*}
For each of these, we provide the functions $\tau_A,\tau_B,\tau_C$ which 
guarantee tightness. We record the functions in the following table
\begin{equation*}
 \begin{array}{l|c|c|c}
&  ( \tau_A(0), \tau_A(1), \tau_A(2) ) & ( \tau_B(0), \tau_B(1), \tau_B(2) ) & ( 
\tau_C(0), \tau_C(1), \tau_C(2) )  \\ \midrule
\calS_1 & (-2,-3,1)& (2,1,0) & (-1,1,0) \\
\calS_2 & (1,-2,2) & (-1,1,0) & (-1,1,0) \\
\calS_3 & (-1,2,-2) & (1,2,0) & (-4,1,0) \\
\calS_4 & (-2,2,-1) & (2,-1,0) & (-2,2,0) \\
\calS_5 & (-1,-3,2) & (1,-1,2) & (-1,2,0) \\
\calS_6 & (0,-1,2) & (0,-1,2) & (-1,1,0)  \\
\calS_7 & (2,-3,1) & (2,-3,1) & (2,1,0) \\
\calS_8 & (2,-2,1) & (-2,1,0) & (-2,1,0) \\
\calS_9 & (0,1,2) & (0,1,2) & (-4,-2,0) \\
\calS_{10} & (-2,2,1) & (-2,1,0) & (-2,1,0) \\
\calS_{11} & (-2,1,4) & (-2,1,4) & (-2,1,4) \\
\calS_{12} & (-2,1,4)& (-2,1,4)& (-5,-2,4 )\\
\calS_{13} & (-1,0,1) & (-1,0,1) & (-1,0,1) \\
 \end{array}
\end{equation*}
This shows that every oblique support in $[3] \times [3] \times [3]$ is tight; 
in particular, every oblique tensor is tight and $\bar{\Tight} _3 = 
\bar{\Oblique}_3$.
\end{proof}

\begin{proposition}\label{prop: tight strict in oblique}
Let $T \in A \otimes B \otimes C$ with $\bfa = \bfb = \bfc = 4$ be the tensor
\begin{align*}
T = & a_0\otimes b_2\otimes c_3+a_0\otimes b_3\otimes c_2+a_1\otimes
b_0\otimes c_3+a_1\otimes b_1\otimes c_2+a_1\otimes b_2\otimes c_1 \\
+&a_1\otimes b_3\otimes c_0+a_2\otimes b_1\otimes c_1+a_2\otimes b_2\otimes
c_0+a_3\otimes b_0\otimes c_2+a_3\otimes b_1\otimes c_0.
\end{align*}
Then $T$ is oblique and not tight.
\end{proposition}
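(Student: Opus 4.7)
The plan is to verify the two parts of the statement --- that $T$ is oblique and that $T$ is not tight --- separately.

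\textbf{Obliqueness.} The plan is to show that $\calS := \supp(T)$ is already an antichain in $[4]\times[4]\times[4]$ under the standard product partial ordering. Partitioning $\calS$ according to the value of $i+j+k$, the three triples $(0,2,3)$, $(0,3,2)$ and $(3,0,2)$ lie in poset rank $5$ while the remaining seven lie in poset rank $4$. Two distinct triples of the same coordinate sum are automatically incomparable, so any comparability would occur between a sum-$4$ and a sum-$5$ triple and would require the sum-$4$ triple to be obtained from the sum-$5$ triple by decrementing exactly one coordinate. A finite inspection of the $21$ cross-level pairs shows that no such decrement lands on an element of $\calS$.

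\textbf{Non-tightness.} I would use the infinitesimal criterion recalled in Section \ref{subsec: tight tensors}: $T$ is tight iff the annihilator $\frakg_T \subseteq (\frakgl(A)\oplus\frakgl(B)\oplus\frakgl(C))/\frakz_{A,B,C}$ contains a regular semisimple element, namely a triple $(X,Y,Z)$ each of whose three components is diagonalizable with pairwise distinct eigenvalues. As a warm-up I would first compute the \emph{diagonal part} of $\frakg_T$ in the standard basis, i.e., the solution space of
\[
\tau_A(i)+\tau_B(j)+\tau_C(k)=0, \qquad (i,j,k)\in\calS,
\]
in twelve unknowns. Eliminating pairwise from $\{(0,2,3),(0,3,2)\}$, $\{(1,1,2),(1,2,1)\}$, $\{(1,0,3),(1,1,2)\}$, $\{(1,2,1),(1,3,0)\}$ and $\{(2,1,1),(2,2,0)\}$ one discovers that $\tau_B$ and $\tau_C$ are forced to be arithmetic progressions with a common difference $d$; the pair $\{(3,0,2),(3,1,0)\}$ then produces the contradictory constraint $d = 2d$, forcing $d = 0$ and hence every $\tau_\bullet$ to be constant. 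Thus the diagonal part of $\frakg_T$ in the chosen basis is exactly $\frakz_{A,B,C}$ and contains no regular element.

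To rule out a regular semisimple element of $\frakg_T$ which is non-diagonal in the given basis, I would compute $\frakg_T$ in full by solving the linear system $(X,Y,Z).T=0$ in the $48$ entries of $(X,Y,Z)$, and then verify that for every $(X,Y,Z)\in\frakg_T$ at least one of the three matrices has a repeated eigenvalue. Equivalently, I would show that $\frakg_T$ is contained in the union of the three hypersurfaces cut out by the discriminants of the characteristic polynomials of $X$, $Y$ and $Z$.

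\textbf{Main obstacle.} The warm-up alone is insufficient: a regular semisimple element of $\frakg_T$ could fail to be diagonal in the standard basis yet become diagonal after a simultaneous change of basis in $A$, $B$, $C$. Eliminating this possibility is the main step, and is a finite but opaque linear algebra computation most efficiently performed with a computer algebra system. A more satisfying route would be a basis-free argument extracting, directly from the combinatorial structure of $\calS$, a $G$-invariant obstruction to the existence of a regular semisimple symmetry of $T$; identifying such an obstruction is the main open difficulty of the plan.
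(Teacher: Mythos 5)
Your obliqueness argument matches the paper's: both simply observe that $\supp(T)$ is an antichain, and your verification via the rank stratification (three triples at coordinate-sum $5$, seven at sum $4$, no covering relation between them) is correct.

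For non-tightness, your plan arrives at the same computation the paper performs, plus a warm-up. The paper's proof consists of a single assertion: a direct calculation shows the annihilator $\frakg_T$ is trivial (i.e.\ zero in the quotient $(\frakgl(A)\oplus\frakgl(B)\oplus\frakgl(C))/\frakz_{A,B,C}$), which of course a fortiori contains no regular semisimple element. Your warm-up — solving $\tau_A(i)+\tau_B(j)+\tau_C(k)=0$ over $\calS$ and finding that the increments of $\tau_B,\tau_C$ collapse to a single value $d$ which the pair $(3,0,2),(3,1,0)$ then forces to satisfy $d=2d$ — is arithmetically correct, but, as you yourself note, it only excludes a tight basis that is coordinate-aligned with the standard one, and is therefore redundant once the full stabilizer is computed. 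Your proposed finish (solve $(X,Y,Z).T=0$ in the $48$ entries, then inspect for regular semisimple elements) is exactly the computation the paper does; it turns out that $\frakg_T=0$, so the inspection is vacuous. The one correction to your framing: you call the absence of a basis-free, $G$-invariant obstruction ``the main open difficulty of the plan,'' but there is no logical gap here — the finite linear-algebra computation is a complete and rigorous proof, and the desire for a more conceptual argument is an aesthetic preference, not a missing step. So your plan, once the computation is actually carried out, is sound and lands on the same proof as the paper's, with the pleasant bonus that the stabilizer is not merely free of regular semisimple elements but is identically zero.
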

\begin{proof}
The proof of obliqueness is directly by observing that the support
\[
\calS = \{ (0,2,3), (0,3,2) , 
(1,0,3),(1,1,2),(1,2,1),(1,3,0),(2,1,1),(2,2,0),(3,0,2),(3,1,0)\} 
\]
is an antichain in $[4]\times [4]\times [4]$.

On the other hand $T$ is not tight: a direct calculation shows that its 
annihilator $\frakg_T$ is trivial.
\end{proof}

Relying on the additivity result of Theorem \ref{thm: propagation of 
symmetries}(i), one obtains that the inclusion $\bar{\Tight}_m \subseteq 
\bar{\Oblique}_m$ is strict for every $m \geq 4$. To see this, let $T_4$ be the 
tensor of Proposition \ref{prop: tight strict in oblique} and define $T_m = T_4 
\oplus \Mamu{1}^{\oplus m-4}$. Then $T_m$ is oblique but it is not tight.

We conclude this section with a result on border rank of tight tensors. Let 
$Seg: \bbP A \times \bbP B \times \bbP C \to \bbP (A\otimes B\otimes C)$, 
$Seg([u],[v],[w]) = [u \otimes v \otimes w]$, be the \emph{Segre embedding}, 
whose image $Seg(\bbP A \times \bbP B \times \bbP C )$ is the variety of rank 
one tensors. Let $\sigma_r(Seg(\bbP A \times \bbP B \times \bbP C)) \subseteq 
\bbP (A\otimes B \otimes C)$ be the $r$-th \emph{secant variety} of $Seg(\bbP A 
\times \bbP B \times \bbP C)$, that is the variety of tensors of border rank at 
most $r$.

\begin{proposition}\label{prop: secants and tight}
 Let $\bfa = \bfb = \bfc = m$. Then
\begin{itemize}
 \dotitem $\sigma_m(Seg(\bbP A \times \bbP B \times \bbP C)) \subseteq 
\bar{\Tight}_m$. In other words, $\ur(T)\leq m$ implies $T\in \bar{\Tight}_m$.
 \dotitem $\sigma_{m+1}(Seg(\bbP A \times \bbP B \times \bbP C)) \not\subseteq 
\bar{\Tight}_m$.
  In other words, a general tensor $T$ with $\ur(T)\geq m+1$ is not tight.
\end{itemize}
\end{proposition}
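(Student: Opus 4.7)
The unit tensor $T_0 = \sum_{i=1}^m a_i \otimes b_i \otimes c_i$ in a fixed basis of $A,B,C$ has support $\{(i,i,i):i\in [m]\}$, which is tight via $\tau_A(i)=\tau_B(i)=i$, $\tau_C(i)=-2i$, so $T_0$ is tight. Every concise rank-$m$ tensor admits a decomposition $T'=\sum u_i\otimes v_i\otimes w_i$ in which each of $\{u_i\},\{v_i\},\{w_i\}$ is a basis (conciseness forces the $m$-element families to span their factors), so a suitable change of basis carries $T_0$ to $T'$, giving $T'\in G\cdot T_0$. Since concise rank-$m$ tensors are dense in $\sigma_m := \sigma_m(Seg(\bbP A\times \bbP B\times \bbP C))$ (the orbit $G\cdot T_0$ has dimension $3m^2 - 2m$, matching the expected dimension of $\sigma_m$), I conclude $\sigma_m\subseteq \overline{G\cdot T_0}\subseteq \overline{\Tight}_m$.

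\textbf{Part (ii), plan.} The strategy is to exhibit a single tensor $T\in \sigma_{m+1}$ with $\dim \frakg_T = 0$: then by upper semicontinuity of $\dim \frakg_T$ (standard semicontinuity of fiber dimension applied to the Lie algebra action map $(\frakgl(A)\oplus \frakgl(B)\oplus \frakgl(C))\times (A\otimes B\otimes C)\to A\otimes B\otimes C$), together with the fact that every concise tight tensor satisfies $\dim \frakg_T\geq 1$ (by definition $\frakg_T$ contains a regular semisimple element), the closed subvariety $\{T:\dim \frakg_T\geq 1\}\subseteq A\otimes B\otimes C$ contains $\overline{\Tight}_m$, and our $T$ is not in it. The same reasoning shows the open dense locus $\{T:\dim \frakg_T=0\}$ is disjoint from $\overline{\Tight}_m$, which yields the reformulation ``a general tensor with $\uR(T)\geq m+1$ is not tight'' in the regime $m\geq 3$ where $\sigma_m\subsetneq A\otimes B\otimes C$.

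\textbf{Part (ii), main computation.} The candidate is $T := T_0 + u\otimes v \otimes w$, where $u=\sum u_i a_i$, $v=\sum v_j b_j$, $w=\sum w_k c_k$ are chosen with all coefficients nonzero; evidently $\bfR(T)\leq m+1$, so $T\in \sigma_{m+1}$. Expanding
\[
(X,Y,Z).T_0=\sum_i \bigl(Xa_i\otimes b_i\otimes c_i + a_i\otimes Yb_i\otimes c_i + a_i\otimes b_i\otimes Zc_i\bigr)
\]
and matching basis coefficients yields $(X,Y,Z).T_0=0$ iff $X,Y,Z$ are diagonal with $X_{ii}+Y_{ii}+Z_{ii}=0$ for every $i$. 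For such a diagonal triple, writing $\lambda_i=X_{ii}$, $\mu_j=Y_{jj}$, $\nu_k=Z_{kk}$, the condition $(X,Y,Z).(u\otimes v\otimes w)=0$ reads
\[
\sum_{i,j,k}u_iv_jw_k(\lambda_i+\mu_j+\nu_k)\,a_i\otimes b_j\otimes c_k=0,
\]
which, since every $u_iv_jw_k\neq 0$, forces $\lambda_i+\mu_j+\nu_k=0$ for all $i,j,k$. Hence $X=\lambda\,\Id_A$, $Y=\mu\,\Id_B$, $Z=\nu\,\Id_C$ with $\lambda+\mu+\nu=0$, so the joint annihilator in $\frakgl(A)\oplus \frakgl(B)\oplus \frakgl(C)$ is exactly $\frakz_{A,B,C}$, giving $\frakg_T=0$. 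The only non-routine ingredient is the upper semicontinuity invoked in the plan; everything else is a direct computation once the candidate tensor is chosen.
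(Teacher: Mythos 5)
Part (i) matches the paper's argument: you identify $\sigma_m$ as the orbit closure $\overline{G\cdot \Mamu{1}^{\oplus m}}$ and invoke tightness of the unit tensor; this is exactly what the paper does.

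For part (ii) the semicontinuity framework is fine, but the main computation has a genuine logical gap. You need to determine $\frakg_T$ where $T = T_0 + u\otimes v\otimes w$, i.e.\ to solve the single equation
\[
(X,Y,Z).T_0 \;+\; (X,Y,Z).(u\otimes v\otimes w) \;=\; 0.
\]
What you actually solve is the \emph{pair} of equations $(X,Y,Z).T_0 = 0$ and $(X,Y,Z).(u\otimes v\otimes w) = 0$, i.e.\ you compute the joint annihilator of the two summands. The joint annihilator is contained in $\frakg_T$, not the other way around, so showing it is trivial gives no upper bound on $\dim\frakg_T$. Concretely, there is no reason that $(X,Y,Z).T=0$ should force $(X,Y,Z).T_0=0$: the two terms $(X,Y,Z).T_0$ and $(X,Y,Z).(u\otimes v\otimes w)$ can cancel on the part of the support where at least two of the indices $p,q,r$ coincide, so $X,Y,Z$ need not be diagonal a priori. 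A correct treatment must expand $(X,Y,Z).T$ directly and handle the coupled system (for instance, the coefficients $a_p\otimes b_q\otimes c_r$ with $p,q,r$ pairwise distinct receive contributions only from $u\otimes v\otimes w$, and one can try to leverage those to constrain the off-diagonal entries before moving to the diagonal part); this requires more work than you give and is delicate for small $m$. The paper itself sidesteps the issue by simply asserting ``a direct calculation shows $\frakg_{T_{std,m}}$ is trivial'' without exhibiting it, so if you cannot close the gap you should state the result the same way; but the derivation you present does not establish it.

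Minor point in part (i): the dimension count justifying density is correct but heavier than needed; it is cleaner to observe that a generic point of $\sigma_m$ is a sum of $m$ generic rank-one tensors, whose factors automatically form bases, so it lies in $G\cdot T_0$.
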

\begin{proof}
 If $r \leq m$, then $\sigma_r(Seg(\bbP A \times \bbP B \times \bbP C)) = 
\bar{(GL(A) \times GL(B) \times GL(C)) \cdot \Mamu{1}^{\oplus r}}$, where $ 
\Mamu{1}^{\oplus r} = \sum_{0}^{r-1} a_i \otimes b_i \otimes c_i$. Since 
$\Mamu{1}^{\oplus r}$ is tight, we have $\sigma_m(Seg(\bbP A \times \bbP B 
\times \bbP C)) \subseteq \bar{\Tight}_m$.
 
 Let $T_{std,m} = \Mamu{1}^{\oplus m} + \left(\sum_1^m a_i \right) \otimes 
\left(\sum_1^m b_i \right) \otimes \left(\sum_1^m c_i \right)$. From the 
expression one sees that $T_{std,m} \in\sigma_{m+1}(Seg(\bbP A \times \bbP B 
\times \bbP C))$. A direct calculation shows that the annihilator 
$\frakg_{T_{std,m}}$ is trivial, therefore $T_{std,m}$ is not tight. We conclude 
$\sigma_{m+1}(Seg(\bbP A \times \bbP B \times \bbP C)) \not\subseteq 
\bar{\Tight}_m$.
\end{proof}

\section{Compressibility of Tight tensors}\label{section: compressibility}
In this section we briefly review Strassen's spectral theory and his support functionals  in
order to state the original version of Conjecture \ref{strconj} and to relate it to the notion
of {\it compressibility} of tensors.

\subsection{Strassen's spectral theory}
In \cite{Str:AsySpectrumTensorsExpMatMult,MR882307,Str:AsySpectrumTensors,MR1089800}, Strassen   proved that asymptotic degeneration of tensors (and in particular the asymptotic rank) is   captured by  what he named the \emph{asymptotic spectrum of tensors}. What follows is a brief description of the theory, see  \cite{2017arXiv170907851C,LCBMS} for   extensive discussions. Let $\calT = \varinjlim_m \bbC^m \otimes \bbC^m \otimes \bbC^m$ be the direct limit defined by fixed inclusions of $\bbC^m \subseteq \bbC^{m+1}$. The set $\calT$ is a semi-ring  under the operations of direct sum and Kronecker product. There is a natural preorder on $\calT$  given by \emph{asymptotic degeneration}: $T' \lesssim T$ if there exists a sequence $\{\alpha_N \} \in o(N)$ such that, ${T'}^{\boxtimes N}$ is a degeneration of $T^{\boxtimes N + \alpha_N}$ for every $N$. Strassen proved that asymptotic degeneration is controlled by \emph{spectral points}: these are real-valued semi-ring homomorphisms which are monotone under degeneration. In symbols, a spectral point is a function $\phi : \calT \to \bbR_+$, such that $\phi(\Mone)=1$, $\phi(T_1 \oplus T_2) = \phi(T_1) + \phi(T_2)$, $\phi(T_1 \boxtimes T_2) = \phi(T_1)\phi(T_2)$ and $\phi(T_1) \leq \phi(T_2)$ whenever $T_1$ is a degeneration of $T_2$. Strassen proved that $T_1 \lesssim T_2$ if and only if $\phi(T_1) \leq \phi(T_2)$ for all spectral points $\phi$. 

Since spectral points are semi-ring homomorphisms, one deduces $\phi(\Mone^{\oplus r}) = r$ for every spectral point $\phi$ and consequently $\phi(T) \leq \uR(T^{\boxtimes N})^{1/N}$ for every $N$. Strassen proved that $\aR(T) = \sup\{ \phi(T):\phi \text{ spectral point}\}$.

In this context it is useful to introduce the {\it border subrank} of a tensor $T$, denoted $\uQ(T)$: this is the largest $r$ such that $\Mone^{\op r}$ is a degeneration of $T$. Similar to the asymptotic rank, there is a notion of \emph{asymptotic subrank}, defined by $\aQ(T) = \lim _{N \to \infty} \uQ(T^{\boxtimes N})^{1/N}$. Strassen proved that $\aQ(T) = \inf\{ \phi(T):\phi \text{ spectral point}\}$.

One can restrict the theory to subclasses of tensors which are closed under direct sum and Kronecker product, e.g., the subclasses of tight, oblique or free tensors. Conjecture \ref{strconj} is therefore equivalent to the conjecture that, given a tensor $T \in \bbC^m \otimes \bbC^m \otimes \bbC^m$, one has $\phi(T) \leq m$ for every spectral point.

In general, there is no systematic way to construct spectral points. Strassen defined a set of spectral points on the class of oblique tensors, that we discuss in \S \ref{section: strassen's functionals}. The first non-trivial examples of spectral points for all tensors of order three were determined only very recently, in \cite{2017arXiv170907851C}.

\subsection{Compressibility}
 A tensor $T\in A\ot B\ot C$ is $(\aaa',\bbb',\ccc')$-{\it compressible} (resp. $(\aaa',\bbb',\ccc')$-{\it incompressible}) if there exist (resp. do not exist)  linear spaces $A'\subset A^*$, $B'\subset B^*$, $C'\subset C^*$, respectively of dimensions $\aaa',\bbb',\ccc'$, such that $T|_{A'\ot B'\ot C'}=0$. The {\it total compressibility} of $T$ is the largest $\aaa'+\bbb'+\ccc'$ such that $T$ is $(\aaa',\bbb',\ccc')$-compressible.

A tensor is $\rho$-{\it multicompressible} if it is $(\aaa',\bbb',\ccc')$-compressible for all $\aaa',\bbb',\ccc'\in \bbN$ such that $\aaa'+\bbb'+\ccc'=\rho$.  

Incompressibility was introduced in \cite{MR3682743} and \cite{2016arXiv160807486L} as a measure of non-genericity. It was proved that a generic tensor in $A \otimes B \otimes C$ with $\bfa = \bfb = \bfc = m$ is   $(\bfa',\bfb',\bfc')$-incompressible if $m \leq \frac{(\aaa')^2+(\bbb')^2+(\ccc')^2+\aaa'\bbb'\ccc'}{\aaa'+\bbb'+\ccc'}$. In particular a generic tensor is not $(\sqrt{\frac{m}{3}},\sqrt{\frac{m}{3}},\sqrt{\frac{m}{3}})$-compressible, see \cite[Ex. 4.3]{2016arXiv160807486L} and consequently it is not $3\sqrt{\frac{m}{3}}$ multi-compressible.
 
It turns out that compressibility has a strong connection with Strassen's spectral theory and with the notion of slice rank.

\begin{definition}
Let $T \in A \otimes B \otimes C$. The {\it slice rank} of $T$, denoted $\slrk(T)$, is the smallest $r$ such that there exist three subspaces $A',B',C'$ of $A,B,C$ respectively such that $T \in A'\otimes B \otimes C + A \otimes B' \otimes C + A \otimes B \otimes C'$ and $\dim A' + \dim B' + \dim C' = r$. In other words, there exist vectors $a_i\in A$, $b_j \in B$, $c_k\in C$ such that $T= \sum_{i=1}^{r_A} a_i\ot X_i +\sum_{j=1}^{r_B} b_j\ot Y_j+ \sum_{k=1}^{r_C} c_k\ot Z_k$ with $r_A+r_B+r_C=r$ (and we suppress from the notation a reordering of the factors in the second and third summations).
\end{definition}

There is an equivalent definition in terms of compressibility: if $T \in A \otimes B \otimes C$ with $\dim A = \bfa, \dim B = \bfb, \dim C = \bfc$, then $\slrk(T) = \bfa + \bfb +\bfc - \kappa$ where $\kappa$ is the total compressibility of $T$.

For a tensor $T$, there is an asymptotic version of slice rank, defined as 
\[
\uwave{\slrk}(T) = \lim_{N \to \infty} \slrk(T^{\boxtimes N})^{1/N}. 
\]

Slice rank was introduced in \cite{Tao:SymmetricCrootLevPach} in connection with the cap set problem (see also \cite{tao,MR3631613}). In general, subrank is a lower bound for slice rank and consequently asymptotic subrank is a lower bound for asymptotic slice rank. In \cite{2017arXiv170907851C}, the authors showed that on the subclass of tight tensors, asymptotic slice rank and asymptotic subrank are equal and therefore asymptotic slice rank is entirely controlled by spectral points.

\subsection{Strassen's support functionals: minimal weighted average incompressibility}\label{section: strassen's functionals}
In this section, we describe the spectral points defined by Strassen \cite{Str:AsySpectrumTensors} on the class of tight tensors, highlighting the geometric perspective and the connections with compressibility.

A (increasing) complete flag in $\bbC^m$ is a sequence of subspaces $0 \subseteq V_1 \subseteq \cdots \subseteq V_{m} = \bbC^m$ with $\dim V_k = k$. Every increasing complete flag induces a unique (decreasing) complete flag in ${\bbC^m}^*$, defined by associating to a subspace $V_k \subseteq \bbC^m$ its annihilator $V_k^\perp \subseteq {\bbC^m}^*$.

Let $\cF$ denote set of triples of increasing complete flags in $A,B,C$. For $ f\in \cF$ with $f = (f_A,f_B,f_C)$, $f_A = ( 0 \subseteq A_1 \subseteq \cdots \subseteq A_\bfa = A)$ (and similarly for $f_B$ and $f_C$) and for every $T\in A\ot B\ot C$ define
\[
\tincompr_f(T):=\{ (i,j,k) \mid T|_{A_i ^\perp \ot B_j ^\perp\ot C_k^\perp}\neq 0\},
\]
the set of (indices of) incompressible subspaces in the triple  of flags $ f\in \cF$. 

Given a probability distribution $p$ on $[m]$, let $H(p) = - \sum_{j=1}^m p_j \log_2(p_j)$ denote its Shannon entropy, with the convention that $0\log_20=0$.

Let $p$ a probability distribution on $[\aaa]\times[\bbb]\times[\ccc]$, that is $p = (p_{ijk} : i \in [\bfa],j\in[\bfb],k\in [\bfc])$ with $p_{ijk} \in [0,1]$ and $\sum p_{ijk} = 1$. Let $p_A,p_B,p_C$ be the induced marginal distributions: more precisely, $p_A$ is the probability distribution on $[\bfa]$ defined by $p_{A,i} = \sum_{jk} p_{ijk}$ and similarly $p_B$ and $p_C$. 

Let $\theta = (\theta_A,\theta_B,\theta_C)$ be a probability distribution on $\{ 1,2,3\}$, that is a triple of numbers in $[0,1]$ with $\theta_A + \theta_B +\theta_C = 1$. In \cite{MR1089800}, for every $\theta$, Strassen defined a functional $\hat{\zeta}^\theta$, that he called \emph{support functional}, on the class of oblique tensors and characterized as follows: 
\begin{equation}\label{strzeta}
\begin{aligned}
&\log_2( \hat\zeta^{\th}(T)) = \\
&\min \Biggl\{ \max \left\{ \theta_A H(p_A) + \theta_B H(p_B) + \theta _C H(p_C) : \begin{array} {l}
										  p \text{ prob. dist. on} \\
                                                                                  \tincompr_f(T) 
                                                                                  \end{array}\right\} : \begin{array}{l} f = (f_A,f_B,f_C) \\ \text{triple of flags} \end{array}\Biggr\}.
\end{aligned}
\end{equation}

Strassen proved that if $T$ is tight, then $\aQ(T) = \inf _\theta \hat \zeta^{\th}(T)$. The original form of the asymptotic rank conjecture was stated in \cite{Strassen:AlgebraComplexity} in terms of the support functionals. It states that on the class of tight tensors the support functionals determine the entire spectrum:

\begin{conjecture} [\cite{Strassen:AlgebraComplexity}, Conj. 5.3] \label{conj: strassen functionals} The asymptotic spectrum of the class of tight tensors coincides with the set of support functionals.
\end{conjecture}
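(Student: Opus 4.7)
The plan is to work within Strassen's abstract duality framework. The set of spectral points on the semi-ring of (asymptotic equivalence classes of) tight tensors is a compact Hausdorff space, which I denote $X(\Tight)$; the formula \eqref{strzeta} then defines a map $\Psi : \Delta^2 \to X(\Tight)$, $\theta \mapsto \hat\zeta^{\theta}$, and the conjecture asserts that $\Psi$ is surjective. Before addressing surjectivity, I would consolidate two inputs from Strassen's existing work: (a) each $\hat\zeta^{\theta}$ is a spectral point on the oblique class and hence on tight by Remark \ref{rmk: tight is oblique}; (b) $\aR(T) = \sup_{\phi \in X(\Tight)} \phi(T)$ and $\aQ(T) = \inf_{\phi \in X(\Tight)} \phi(T)$, together with the refinement $\aQ(T) = \inf_{\theta} \hat\zeta^{\theta}(T)$ for tight $T$.

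Next I would address injectivity of $\Psi$ and the topology of its image. Continuity of $\Psi$ in $\theta$ follows from \eqref{strzeta}, since Shannon entropy is continuous in the probability distribution and the outer $\min$-$\max$ runs over compact sets independent of $\theta$. For injectivity, I would evaluate $\Psi(\theta)$ on asymmetric test tensors such as $\Mamu{\bfn}$ with unequal entries of $\bfn$, or direct sums of such with rank-one tensors, chosen so that the three marginal entropies appearing in \eqref{strzeta} decouple sufficiently for the coordinates of $\theta$ to be read off individually. By compactness of $\Delta^2$ and continuity of $\Psi$, the image $\Psi(\Delta^2)$ is then a closed subset of $X(\Tight)$ in the weak-$*$ topology.

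The main obstacle is surjectivity: showing that every spectral point $\phi \in X(\Tight)$ lies in $\Psi(\Delta^2)$. My proposed attack is a Stone--Weierstrass / Choquet type argument: since $\Psi(\Delta^2)$ is closed, it suffices to show that any $\phi$ outside it is separated from $\Psi(\Delta^2)$ by some evaluation functional $T \mapsto \phi(T)$ with $T$ tight, yielding a value $\phi(T) \notin \{\hat\zeta^{\theta}(T) : \theta \in \Delta^2\}$, which would contradict the sup/inf identities in (b). The genuine difficulty---and the reason this conjecture has remained open for decades---is that there is no known construction of such a witness tensor: the tight class could \emph{a priori} admit ``hidden'' spectral points arising from invariants finer than the entropies of the marginal distributions in \eqref{strzeta}. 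A concrete route I would pursue is to use the $\fraksl_2$-weight interpretation of maximal tight supports from Remark \ref{rmk: oblique when tight}, together with Proctor's structure of $[\bfa]\times[\bfb]\times[\bfc]$ as a Peck poset, to classify the extreme points of $X(\Tight)$ and then argue via an exchange/interpolation principle on weights that the $\hat\zeta^{\theta}$ already exhaust them.
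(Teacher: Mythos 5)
The statement you are attempting to prove is labeled a \emph{Conjecture} in the paper, and it is in fact Strassen's original Conjecture 5.3 from \cite{Strassen:AlgebraComplexity}, which has been open for roughly three decades. The paper offers no proof of it and does not claim one; it merely records the conjecture and notes (in the paragraph following it) that an affirmative answer would imply $\aR(T) \leq m$ for concise tight $T \in \BC^m\ot\BC^m\ot\BC^m$, i.e.\ Conjecture~\ref{strconj}. So there is no proof in the paper against which to compare your attempt.

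Turning to your proposal on its own merits: the framing is sound as far as it goes. You correctly identify the conjecture as asserting surjectivity of $\Psi:\Delta^2\to X(\Tight)$, $\theta\mapsto\hat\zeta^\theta$, you correctly invoke Strassen's facts that each $\hat\zeta^\theta$ is a spectral point on the oblique (hence tight) class and that $\aR$ and $\aQ$ are the sup and inf over $X(\Tight)$, and you note that $\Psi(\Delta^2)$ is compact, hence closed. But the entire content of the conjecture is precisely the surjectivity step, and your proposal does not supply it. The Stone--Weierstrass/Choquet idea you sketch --- separating a hypothetical $\phi\notin\Psi(\Delta^2)$ by some evaluation functional $T\mapsto\phi(T)$ and deriving a contradiction with the sup/inf identities --- does not actually give a contradiction: the identities $\aR(T)=\sup_{\phi}\phi(T)$ and $\aQ(T)=\inf_\theta\hat\zeta^\theta(T)$ only pin down the \emph{extremes} of the set $\{\phi(T):\phi\in X(\Tight)\}$, and nothing so far rules out a $\phi$ taking values strictly between the extremes that are not of the form $\hat\zeta^\theta(T)$ for any $\theta$. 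You acknowledge this yourself ("there is no known construction of such a witness tensor" and "the tight class could \emph{a priori} admit hidden spectral points"). The closing suggestion --- classify the extreme points of $X(\Tight)$ via the $\fraksl_2$-weight structure of Peck posets and show the $\hat\zeta^\theta$ exhaust them --- is a plausible research direction, but no argument is given, and it is not known how to carry it out. In short: what you have written is a correct problem reduction and a candidate strategy, not a proof; the essential step (surjectivity of $\Psi$, equivalently nonexistence of spectral points on tight tensors beyond the support functionals) is missing, and this is exactly why the conjecture remains open.
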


Note that if $T \in \bbC^m \otimes \bbC^m \otimes \bbC^m$, then the value of the support functionals on $T$ is bounded from above by $m$: indeed if $p$ is a probability distribution on $[m]$, then $H(p) \leq \log_2(m)$ with equality holding if and only if $p$ is the uniform distribution. In particular, if the support functionals define all spectral points as predicted by Conjecture \ref{conj: strassen functionals}, one obtains $\aR(T) = \sup_\theta \zeta^\theta(T) \leq 2^{\log_2(m)} = m$.

 \subsection{Compressibility of tight tensors}
The following result shows that tight tensors are highly compressible, compared to generic tensors. More precisely, a tight tensor is $(\bfa',\bfb',\bfc')$-compressible for $\bfa',\bfb',\bfc' \approx m/2 \gg \sqrt{\frac{m}{3}}$.

\begin{theorem} \label{compressthm}
Let $\bfa= \bfb=\bfc = m$ and let $T\in A \otimes B \otimes C$ be a tight 
tensor. Then $T$ is $(\lceil m/2\rceil, \lceil m/2\rceil, \lfloor 
m/2\rfloor)$-compressible and similarly permuting the order of the factors.
\end{theorem}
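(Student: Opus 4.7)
The plan is to exploit the defining linear relation of tightness directly via threshold-based compressions. After fixing a tight basis of $T$ with injective functions $\tau_A, \tau_B, \tau_C : [m] \to \bbZ$ satisfying $\tau_A(i) + \tau_B(j) + \tau_C(k) = 0$ on $\supp(T)$, I would reorder basis vectors so that $\tau_A, \tau_B, \tau_C$ are strictly increasing with integer values $\alpha_1 < \cdots < \alpha_m$, $\beta_1 < \cdots < \beta_m$, $\gamma_1 < \cdots < \gamma_m$. In particular consecutive differences are $\geq 1$, and this integrality will be the key quantitative input at the end.

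For real thresholds $s_A, s_B, s_C$, I would define
\[
A'_> = \langle a^i : \alpha_i > s_A \rangle \subseteq A^{*},
\]
where $\{a^i\}$ is dual to $\{a_i\}$, and analogously $B'_>, C'_>$. The basic observation is that $T|_{A'_> \otimes B'_> \otimes C'_>} = 0$ whenever $s_A + s_B + s_C \geq 0$: any $(i,j,k) \in \supp(T)$ with simultaneous strict inequalities $\alpha_i > s_A$, $\beta_j > s_B$, $\gamma_k > s_C$ would force $0 = \alpha_i + \beta_j + \gamma_k > s_A + s_B + s_C \geq 0$, which is impossible. Dually, replacing ``$>$'' by ``$<$'' throughout, the subspace $A'_< \otimes B'_< \otimes C'_<$ is annihilated by $T$ whenever $s_A + s_B + s_C \leq 0$.

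Setting $M = \lceil m/2\rceil$ and $L = \lfloor m/2\rfloor$, the dimensional requirements $\dim A'_> = \dim B'_> = M$ and $\dim C'_> = L$ force the thresholds into the half-open intervals $s_A \in [\alpha_L, \alpha_{L+1})$, $s_B \in [\beta_L, \beta_{L+1})$, $s_C \in [\gamma_M, \gamma_{M+1})$. The ``upper'' construction is then realizable exactly when the supremum of admissible sums, namely $\alpha_{L+1} + \beta_{L+1} + \gamma_{M+1}$, is strictly positive; analogously, the ``lower'' construction with ``$<$'' is realizable iff $\alpha_M + \beta_M + \gamma_L < 0$.

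The decisive step, where the integrality of the $\tau$'s is used, is to show that at least one of the two inequalities above must hold. If both failed, subtracting would yield
\[
(\alpha_{L+1} - \alpha_M) + (\beta_{L+1} - \beta_M) + (\gamma_{M+1} - \gamma_L) \leq 0,
\]
which I would rule out by parity of $m$: for $m$ even, $M = L$ makes each difference $\geq 1$ by strict integer monotonicity, giving a sum $\geq 3$; for $m$ odd, $M = L+1$ kills the first two differences but $\gamma_{M+1} - \gamma_L = \gamma_{L+2} - \gamma_L \geq 2$, still positive. The other orderings $(M, L, M)$ and $(L, M, M)$ follow by rerunning the argument with the three factors permuted, since the construction is symmetric in $A, B, C$ up to which factor receives the dimension $L$. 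I expect the main subtlety to lie in pinning down the half-open threshold intervals so that the dimension counts come out on the nose and the integer spacing provides just enough slack to close the dichotomy; the algebraic content of the argument is otherwise quite mild.
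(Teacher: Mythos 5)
Your proof is correct and rests on the same underlying mechanism as the paper's — threshold cuts through a tight basis, exploiting that $\tau_A + \tau_B + \tau_C$ vanishes on $\supp(T)$ — but the way you establish the dichotomy between an ``upper'' and a ``lower'' cut is genuinely different and cleaner. The paper first rescales all three $\tau$'s by $3$ and shifts them (by amounts summing to zero) so that $\tau_A(\lfloor m/2\rfloor) = \tau_B(\lceil m/2\rceil) = -1$ and every value is $\equiv 2 \pmod 3$, hence nonzero; the case split is then on the sign of $\tau_C(\lceil m/2\rceil)$, which the mod-$3$ normalization guarantees is nonzero, and explicit $\geq 3$ and $\leq -3$ estimates close each case. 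You skip the ad hoc normalization entirely: you write down the exact feasibility conditions for the two cuts, $\alpha_{L+1}+\beta_{L+1}+\gamma_{M+1}>0$ for the upper and $\alpha_M+\beta_M+\gamma_L<0$ for the lower, subtract, and use strict integer monotonicity to rule out their simultaneous failure, picking up slack $\geq 3$ when $m$ is even and $\geq 2$ when $m$ is odd. This makes the role of integrality transparent and avoids modifying the data. Both arguments are symmetric in the three factors, so both yield the permuted statements; yours is the tidier write-up and, I think, the one that more clearly isolates where the factor-of-two gain in compressibility comes from.
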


\begin{proof}
Let  $T$ be expressed in a tight basis and let $\tau_A,\tau_B,\tau_C : \{ 1 
\vvirg m\} \to \bbZ$ be the corresponding increasing injective functions, with 
$\tau_A + \tau_B + \tau_C$ identically $0$ on $\supp(T)$. We impose one 
additional normalization on $\tau_A,\tau_B,\tau_C$ as follows: we assume 
$\tau_A( \lfloor m/2 \rfloor) = \tau_B(\lceil m/2 \rceil) = -1$ and if 
$\tau_C(k) \geq 0$ then $\tau_C(j) \geq 2$: in order to do this, redefine 
$\tau_A = 3 \tau_A' - 3 \tau_A( \lfloor m/2 \rfloor) - 1$, $\tau_B' = \tau_B - 
\tau_B(\lceil m/2 \rceil) -1$ and $\tau_C' = \tau_C + \tau_A(\lfloor m/2 
\rfloor) + \tau_B(\lceil m/2 \rceil) + 2$. Notice that $\tau_A + \tau_B + \tau_C 
= 0$ if and only if $\tau_A' + \tau_B' + \tau_C' = 0$, so $\tau_A,\tau_B,\tau_C$ 
define the same tight support as $\tau_A' ,\tau_B' , \tau_C'$; moreover 
$\tau_A',\tau_B',\tau_C'$ are increasing, $\tau_A( \lfloor m/2 \rfloor) = 
\tau_B(\lceil m/2 \rceil) = -1$ 
and if $\tau_C(k) \geq 0$ then $\tau_C(k) \geq 2$ because $\tau_C \equiv 2 \mod 
3$. In fact $\tau_A,\tau_B,\tau_C \equiv 2 \mod 3$ and in particular they are 
never $0$. 

Now, we consider two cases:
 \begin{enumerate}
\item[(i)] if $\tau_C(\lceil m/2 \rceil) > 0$, then choose $A' = \langle 
\alpha_i : i \in \{ \lfloor m/2 \rfloor +1 \vvirg m \} \rangle$, $B' = \langle 
\beta_j : j \in \{ \lceil m/2 \rceil \vvirg m \} \rangle$ and $C' = \langle \{ 
\gamma_k : k \in \{ \lceil m/2 \rceil \vvirg m \} \rangle$. Notice that $\dim A' 
= \lceil m/2 \rceil$, $\dim B' = \lfloor m/2 \rfloor +1$ and $\dim C' = \lfloor 
m/2 \rfloor +1$; moreover, the sum of $\tau_A',\tau_B',\tau_C'$ on the product 
of these subsets is lower bounded by $\tau_A ( \lfloor m/2 \rfloor +1 ) + 
\tau_B(\lceil m/2 \rceil) + \tau_C(\lfloor m/2 \rfloor ) \geq 2 -1 + 2= 3 > 0$. 
This shows $T|_{A' \otimes B' \otimes C'} = 0$ because no elements of $\supp(T)$ 
appear in this range. In this case $T$ is $(  \lceil m/2 \rceil , \lfloor m/2 
\rfloor +1 , \lfloor m/2 \rfloor + 1)$-compressible, and in particular $(  
\lfloor m/2 \rfloor , \lceil m/2 \rceil , \lceil m/2 \rceil)$-compressible.

\item[(ii)] if $\tau_C(\lceil m/2 \rceil) < 0$, then choose $A' = \langle 
\alpha_i : i \in \{1 \vvirg \lfloor m/2 \rfloor \} \rangle $, $B' = \langle 
\beta_j : j \in \{1 \vvirg \lceil m/2  \rceil \} \rangle$ and $C' = \langle \{ 
\gamma_k : k \in \{1 \vvirg \lceil m/2\rceil \} \rangle$. Notice that $\dim A' = 
\lfloor m/2 \rfloor$, $\dim B' = \lceil m/2 \rceil$ and $\dim C' = \lceil m/2 
\rceil$; moreover, the sum of $\tau_A',\tau_B',\tau_C'$ on the product of these 
subsets is upper bounded by $\tau_A( \lfloor m/2 \rfloor ) + \tau_B( \lceil m/2 
\rceil) + \tau_C(\lceil m/2 \rceil) \leq -1 -1 -1 = -3$. This shows $T|_{A' 
\otimes B' \otimes C'} = 0$ because no elements of $\supp(T)$ appear in this 
range. In this case $T$ is $(  \lfloor m/2 \rfloor, \lceil m/2 \rceil, \lceil 
m/2 \rceil)$-compressible.
\end{enumerate}
\end{proof}

 We show that tight tensors with support equal to $\calS_{t\text{-}max,m}$ are 
highly multicompressible. More precisely, recall that generic tensors are not $3 
\sqrt{ \frac{m}{3}}$-multicompressible, whereas for tensors with support 
$\calS_{t\text{-}max,m}$ we have the following result.

\begin{proposition}\label{prop: multicompressible tight}
Let $\bfa = \bfb = \bfc = m$ and let $T\in A \otimes B \otimes C$ be a tight 
tensor with support $\cS_{t\text{-}max,m}$. Then $T$ is $(3\lfloor m/2 
\rfloor+1)$-multicompressible.
\end{proposition}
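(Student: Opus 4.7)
The plan is to exhibit, for each admissible triple $(\bfa',\bfb',\bfc')$ with $\bfa'+\bfb'+\bfc'=3\lfloor m/2\rfloor+1$ and each entry at most $m$, an explicit choice of dual coordinate subspaces on which $T$ vanishes. Working in the tight basis of Example \ref{example: max tight support}, the support $\cS_{t\text{-}max,m}$ is cut out inside $[m]^3$ by a single affine equation: $i+j+k=3\ell$ when $m=2\ell+1$, and (after the reindexing in Example \ref{example: max tight support}) $i+j+k=3\ell-1$ when $m=2\ell$. In either parity the common coordinate sum $s$ satisfies $s\geq 3\lfloor m/2\rfloor-1$.

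Let $\{\alpha_i\},\{\beta_j\},\{\gamma_k\}$ be the dual bases of $A^*,B^*,C^*$ in which $T$ has support $\cS_{t\text{-}max,m}$. For any admissible $(\bfa',\bfb',\bfc')$, I would set
\[
A'=\langle \alpha_0,\ldots,\alpha_{\bfa'-1}\rangle,\qquad B'=\langle \beta_0,\ldots,\beta_{\bfb'-1}\rangle,\qquad C'=\langle \gamma_0,\ldots,\gamma_{\bfc'-1}\rangle,
\]
each a coordinate subspace of the prescribed dimension (with the convention that a span over the empty index set is $0$). For any triple $(i,j,k)$ with $i<\bfa'$, $j<\bfb'$, $k<\bfc'$ one has
\[
i+j+k\leq (\bfa'-1)+(\bfb'-1)+(\bfc'-1)=3\lfloor m/2\rfloor-2 \;<\; s,
\]
so the coordinate box $\{0,\ldots,\bfa'-1\}\times\{0,\ldots,\bfb'-1\}\times\{0,\ldots,\bfc'-1\}$ contains no element of $\supp(T)$. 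Hence $T|_{A'\otimes B'\otimes C'}=0$, witnessing $(\bfa',\bfb',\bfc')$-compressibility of $T$.

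The entire argument reduces to a one-line pigeonhole bound: a lower-corner coordinate box of total dimension $3\lfloor m/2\rfloor+1$ has all coordinate sums bounded by $3\lfloor m/2\rfloor-2$, which is strictly less than the value $s$ defining the hyperplane on which $\cS_{t\text{-}max,m}$ sits. There is no real obstacle to this plan; the only care required is to normalize the support description in the even case so that $\supp(T)$ genuinely lies on the single hyperplane $i+j+k=3\ell-1$ in $[m]^3$.
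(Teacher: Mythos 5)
Your proposal is correct and takes essentially the same approach as the paper: choose the lower-corner coordinate box $[\bfa']\times[\bfb']\times[\bfc']$ and observe that every coordinate sum there is at most $3\lfloor m/2\rfloor-2$, which is strictly below the level of the hyperplane $i+j+k=s$ carrying $\cS_{t\text{-}max,m}$ (with $s=3\ell$ for $m=2\ell+1$ and $s=3\ell-1$ for $m=2\ell$). The paper phrases the same computation via the tight functions $\tau_A,\tau_B,\tau_C$ from Example \ref{example: max tight support}; your direct use of $i+j+k$ is an equivalent, slightly cleaner reformulation.
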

\begin{proof}
Recall from Example \ref{example: max tight support}, $\tau_A ,\tau_B, \tau_C : 
[m] \to \bbZ$, with $\tau_A (i) = \tau_B(i) = \tau_C(i) = i - \ell$ if $m = 
2\ell+1$ is odd and with $\tau_A(i) =.i-\ell+1$, $\tau_B(j) = \tau_C(j) = 
j-\ell$ if $m = 2\ell$ is even.

Fix $(\bfa',\bfb',\bfc')$ with $\bfa' + \bfb' + \bfc' = 3\lfloor m/2 \rfloor+1 = 
3 \ell +1$. We determine $A' \subseteq A^*,B' \subseteq B^*,C' \subseteq C^*$ 
with $\dim A' = \bfa'$, $\dim B' = \bfb'$, $\dim C' = \bfc'$ such that $T|_{A' 
\otimes B'\otimes C'} = 0$. Let $A' = \langle \alpha^i : i \in [\bfa'] \rangle$, 
$B' = \langle \beta^j : j \in [\bfb'] \rangle$ ,$C' = \langle \gamma^k : k \in 
[\bfc'] \rangle$.

We claim that $T|_{A' \otimes B'\otimes C'} = 0$. This follows from the fact 
that $[\bfa'] \times [\bfb']\times [\bfc'] \cap \calS_{t\text{-}max,m} = 
\emptyset$. If $(i,j,k) \in [\bfa'] \times [\bfb']\times [\bfc']$, we have 
$\tau_A(i) + \tau_B(j) + \tau_C(k) \leq i - \ell +1 + j - \ell + k - \ell \leq 
\bfa'-1 + \bfb'-1 + \bfc'-1 - 3\ell \leq 3\ell + 1 - 2 - 3\ell = -1$ (here the 
first inequality is in fact an equality if $m$ is even). In particular, there 
are no elements $(i,j,k) \in [\bfa'] \times [\bfb']\times [\bfc']$ such that 
$\tau_A(i)+\tau_B(j) + \tau_C(k) = 0$.
\end{proof}

However, we observe that highly multicompressible tensors are not necessarily 
tight:
\begin{example}
This is an example of a $3 \lfloor m/2 \rfloor$-multicompressible tensor that is 
not tight. Let 
\begin{align*}
T = &a_0\otimes b_0\otimes c_0+a_1\otimes b_1\otimes c_1+a_2\otimes b_2\otimes 
c_2+a_3\otimes b_3\otimes c_3+ \\ 
&+(a_0+a_1+a_2+a_3)\otimes (b_0+b_1)\otimes (c_2+c_3)+ (a_1+a_2+a_3)\otimes 
b_2\otimes (c_2+c_3)+ \\ 
&+ (a_1 +a_2+a_3)\otimes b_3\otimes c_3+(a_2+a_3)\otimes b_3\otimes c_2
\end{align*}
be a tensor in $A \otimes B \otimes C$ with $\bfa = \bfb = \bfc = 4$. A direct 
calculation shows that $T$ has trivial annihilator $\frakg_T$, therefore it is 
not tight. It is easy to verify that $T$ is $6$-multicompressible. 

Taking direct sums of copies of the tensor above one obtains highly 
compressible, not tight tensors in higher dimensions.
\end{example}

\subsection{Additional remarks on compressibility in general} 
One can discuss a restricted form of multicompressibility, by letting only the 
dimensions of two factors vary. In this context we have the following result:

\begin{proposition} Let $T \in A \otimes B \otimes C$ with $\bfa = \bfb = \bfc= 
m$. For every $\bfb' ,\bfc'$ with $\bfb' + \bfc' \leq m - \lceil 
\sqrt{m-1}\rceil$, $T$ is $(1,\bfb',\bfc')$-compressible and similarly permuting 
the roles of the three factors.
\end{proposition}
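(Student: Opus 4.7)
The plan is to find a nonzero $\alpha_0\in A^*$ such that the slice $T_{\alpha_0}\in B\otimes C$ has sufficiently large kernel, and then to extract subspaces $B'\subseteq B^*$ and $C'\subseteq C^*$ of the prescribed dimensions making $T_{\alpha_0}|_{B'\otimes C'}=0$. One may first assume $T$ is concise in the first factor: otherwise some nonzero $\alpha\in\ker T_A$ yields $T_\alpha=0$ and any $B', C'$ satisfy the condition trivially. Under conciseness, $L:=T_A(A^*)\subseteq B\otimes C\cong \Mat_m$ is an $m$-dimensional linear subspace of the space of $m\times m$ matrices.

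The key input is a projective dimension estimate on the rank stratification of $\Mat_m$. The determinantal variety $X_r\subseteq \Mat_m$ of matrices of rank at most $r$ is a closed irreducible subvariety of codimension $(m-r)^2$, so its projectivization has codimension $(m-r)^2$ in $\BP(\Mat_m)=\BP^{m^2-1}$. Since $\BP(L)$ has projective dimension $m-1$, it meets $\BP(X_r)$ nontrivially whenever $(m-r)^2\leq m-1$, i.e., $m-r\leq\sqrt{m-1}$. Taking $m-r=\lceil\sqrt{m-1}\rceil$ (with a downward adjustment to $\lfloor\sqrt{m-1}\rfloor$ when $m-1$ is not a perfect square, which only strengthens the bound) produces a nonzero $\alpha_0$ with $\rk(T_{\alpha_0})\leq m-\lceil\sqrt{m-1}\rceil$.

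Given such a slice $T_{\alpha_0}$ of rank $r$, I would then put it into reduced form by choosing appropriate bases $\{b_i\}$ of $B$ and $\{c_j\}$ of $C$, so that $T_{\alpha_0}$ pairs $b_i^*$ with $c_j^*$ via the identity on the first $r$ basis vectors and zero otherwise. For any prescribed $(\bfb',\bfc')$, choose index sets $I_B,I_C\subseteq\{1,\dots,m\}$ with $|I_B|=\bfb'$, $|I_C|=\bfc'$, and $I_B\cap I_C\subseteq\{r+1,\dots,m\}$; set $B'=\mathrm{span}(b_i^*:i\in I_B)$ and $C'=\mathrm{span}(c_j^*:j\in I_C)$. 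The restriction $T_{\alpha_0}|_{B'\otimes C'}$ then vanishes by direct inspection of the reduced form. Such index sets exist provided $\bfb'+\bfc'\leq 2m-r=m+\lceil\sqrt{m-1}\rceil$, which comfortably contains the asserted range $\bfb'+\bfc'\leq m-\lceil\sqrt{m-1}\rceil$. Permuting the roles of the factors yields the other two cases.

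The main obstacle is the existence of the low-rank slice: one must apply the projective dimension estimate carefully and track the appropriate rounding of $\sqrt{m-1}$. Once the slice is located, the construction of $B'$ and $C'$ is elementary combinatorial linear algebra.
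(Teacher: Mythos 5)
Your proof takes essentially the same route as the paper's: reduce to the concise case, locate a low-rank slice $T_A(\alpha_0)$ via a projective dimension count against the rank stratification of $B\otimes C$ (the paper phrases $X_r$ as the secant variety $\sigma_r(Seg(\BP B\times\BP C))$ of the Segre, which is the same determinantal variety), then diagonalize the slice and pick coordinate subspaces $B',C'$ making the restricted submatrix vanish. One small slip in your bookkeeping: the dimension count needs $(m-r)^2\le m-1$, so when $m-1$ is not a perfect square it only yields a slice of rank at most $m-\lfloor\sqrt{m-1}\rfloor$, not $m-\lceil\sqrt{m-1}\rceil$ as you assert, and accordingly $2m-r=m+\lfloor\sqrt{m-1}\rfloor$. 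This is harmless here: as you note, the asserted range $\bfb'+\bfc'\le m-\lceil\sqrt{m-1}\rceil$ sits comfortably inside either bound. Incidentally, your combinatorial step (index sets with $I_B\cap I_C\subseteq\{r+1,\dots,m\}$, valid up to $\bfb'+\bfc'\le 2m-r$) genuinely exploits the low rank of the slice, whereas the paper simply takes $I_B$ and $I_C$ disjoint, which works whenever $\bfb'+\bfc'\le m$ and never actually invokes the rank bound in its final step.
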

\begin{proof}
The result is immediate if $T$ is not concise, as $T$ would have an $m\times m$ block of zeros,
and so, up to permuting factors, it would be $(1,\bfb',\bfc')$-compressible for $\bfb'+\bfc'\leq m$. 

Suppose $T$ is concise. Let 
$\sigma_r(\bbP B \times \bbP C) \subseteq \bbP (B \otimes C)$ denote the 
subvariety of rank at most $r$ elements in $\bbP (B \otimes C)$. We have $\dim 
(\sigma_r(Seg(\BP B\times \BP C)))=2rm-r^2-1$. By conciseness, the image of the 
flattening map $T_A : A^* \to B \otimes C$ has dimension $m$, so its 
projectivization $\bbP (T_A(A^*))$ intersects $\sigma_{r}(Seg(\BP B\times \BP 
C))$ when $r = m-\lceil \sqrt{m-1}\rceil$. 

So fix $r =  m-\lceil \sqrt{m-1}\rceil$ and let $\alpha \in A^*$ such that 
$[T_A(\alpha)] \in \bbP (T_A(A^*)) \cap \sigma_{r}(Seg(\BP B\times \BP C))$. 
Then $T_A(\alpha)$ has rank at most $r=m-\lceil \sqrt{m-1}\rceil$.
 
 Choose bases such that $T_A(\alpha) = b_1\ot c_1+\cdots + b_r\ot c_r$ and let 
$A' = \langle \alpha \rangle$,  $B'= \langle \beta_1 \vvirg \beta_{\bfb'} 
\rangle$
 and $C'=\langle \gamma_{\bfb' + 1} \vvirg \gamma_{\bfb'+\bfc'} \rangle$. Then 
$\dim A' = 1$, $\dim B' = \bfb'$ and $\dim C' = \bfc'$; we have $T|_{A' \otimes 
B'\otimes C'} = 0$, so $T$ is $(1,\bfb',\bfc')$-compressible.
 \end{proof}

More generally, we show that maximally compressible tensors (in the sense of 
\cite{MR3682743}) are also highly multicompressible.

\begin{proposition}\label{prop: maxcompr implies 2m-1 multicompr}
 Let $\bfa = \bfb = \bfc = m$ and let $T \in A \otimes B \otimes C$ be 
$(m-1,m-1,m-1)$-compressible. Then $T$ is $2m-1$-multicompressible.
 \end{proposition}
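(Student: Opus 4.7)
The plan is to reduce the statement to an elementary matrix compressibility lemma by choosing adapted bases. First I would pick bases of $A, B, C$ (with dual bases of $A^*, B^*, C^*$) so that the given compression subspaces are $A' = \langle a_1^*,\dots,a_{m-1}^*\rangle$, $B' = \langle b_1^*,\dots,b_{m-1}^*\rangle$, $C' = \langle c_1^*,\dots,c_{m-1}^*\rangle$. The hypothesis $T|_{A' \otimes B' \otimes C'} = 0$ then translates to the assertion that the coordinate $T^{ijk}$ vanishes whenever $i, j, k \leq m-1$; equivalently, the support of $T$ in these bases lies on the ``boundary'' of the cube $[m]^3$, where at least one index equals $m$.

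Next I would fix nonnegative integers $\bfa', \bfb', \bfc' \leq m$ with $\bfa' + \bfb' + \bfc' = 2m - 1$ and observe that at most one of them can equal $m$. If all three are at most $m - 1$, the result is immediate: taking $A'' \subseteq A'$, $B'' \subseteq B'$, $C'' \subseteq C'$ of the prescribed dimensions makes $T$ vanish on $A'' \otimes B'' \otimes C''$ for free. The only nontrivial case is when, say, $\bfa' = m$, so $\bfb' + \bfc' = m - 1$; here I would set $A'' = A^*$ and still seek $B'' \subseteq B'$, $C'' \subseteq C'$ of the required dimensions. The restriction of $T$ to $A' \otimes B'' \otimes C''$ vanishes automatically, so all that remains is to arrange $T(a_m^*, \cdot, \cdot)|_{B'' \otimes C''} = 0$. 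This is a vanishing condition on the $(m-1) \times (m-1)$ matrix $M$ with entries $M_{jk} = T^{mjk}$ for $j, k \leq m-1$.

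The main (and really the only) nontrivial step is the following elementary matrix lemma: a rank-$r$ bilinear form on $\bbC^p \times \bbC^p$ admits subspaces $B'' \subseteq \bbC^p$, $C'' \subseteq \bbC^p$ of any dimensions $\bfb', \bfc'$ with $\bfb' + \bfc' \leq 2p - r$ on which the form vanishes. I would prove this by writing $M = \sum_{i=1}^{r} u_i \otimes v_i$ with $\{u_i\}$ and $\{v_i\}$ linearly independent and, for a suitable $0 \leq s \leq r$, taking $B'' \subseteq \bigcap_{i\leq s} \ker u_i$ and $C'' \subseteq \bigcap_{i > s} \ker v_i$. In our setting $p = m-1$, $\rk M \leq m-1$, and $\bfb' + \bfc' = m - 1 \leq 2(m-1) - \rk M$, so the lemma applies and produces the required $B''$ and $C''$. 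I do not foresee any real obstacle: the content of the argument is just that the single ``extra'' dimension one picks up when one factor is left uncompressed is exactly the slack afforded by the rank bound on the matrix $M$.
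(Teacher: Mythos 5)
Your proof is correct and follows essentially the same route as the paper. Both arguments reduce to the case that exactly one of $\bfa',\bfb',\bfc'$ equals $m$ (the other case is immediate), and then handle the residual $(m-1)\times(m-1)$ matrix $M$ by choosing compressing subspaces with complementary index sets. The only difference is presentational: the paper first normalizes $M_A$ into rank normal form $\sum_{i=1}^{r} b_i\otimes c_i$ and then takes $B'$ and $C'$ spanned by disjoint blocks of dual basis vectors, whereas you package that step as a standalone lemma that a rank-$r$ bilinear form on $\bbC^p\times\bbC^p$ vanishes on a pair of subspaces of total dimension $2p-r$. Your lemma is in fact slightly more general than what is used here (the application only needs the trivial bound $\rk M\leq m-1$), but in this instance it produces exactly the same conclusion by the same mechanism.
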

\begin{proof}
After fixing bases in $A,B,C$, we may assume without loss of generality that 
$T|_{a_0^\perp \otimes b_0^\perp \otimes c_0^\perp} = 0$; in particular $T$ can 
be written as $T = a_0 \otimes M_A + b_0 \otimes M_B + c_0 \otimes M_C$ where 
$M_A \in B \otimes C$ and similarly $M_B,M_C$ (and reordering the factors in the 
second and third summand). Let $\bfa' , \bfb' , \bfc' \leq m$ with $\bfa' + 
\bfb' + \bfc'  = 2m-1$. Moreover, we may assume $M_A \in b_0^\perp \otimes 
c_0^\perp$ because expressing $T$ in the fixed basis, we can include summands 
including $b_0,c_0$ in $b_0 \otimes M_B + c_0 \otimes M_C$ and in fact we may 
fix bases so that $M_A = \sum_1^r b_i \otimes c_i$ for some $r \leq m-1$.

If $\bfa' , \bfb' , \bfc' < m$, let $A' \subseteq a_0^\perp, B' \subseteq 
b_0^\perp, C' \subseteq c_0^\perp$, so $T|_{A' \otimes B'\otimes C'} = 0$.

Suppose $\bfa' =m$; therefore $A' = A^*$ and $\bfb' + \bfc' = m-1$. Let $B' = 
\langle \beta^1 \vvirg \beta ^{\bfb'} \rangle$ and $C' = \langle c^{\bfb'+1} 
\vvirg c^{m-1} \rangle$. Then $T|_{A' \otimes B' \otimes C'} = 0$. This 
concludes the proof.
\end{proof}

\begin{remark}
Proposition \ref{prop: maxcompr implies 2m-1 multicompr} implies that the Coppersmith-Winograd tensors $T_{cw,q}$ and $T_{CW,q}$, introduced in 
\cite{CopperWinog:MatrixMultiplicationArithmeticProgressions} and used in the research of upper bounds for the complexity of matrix multiplication \cite{stothers,WilliamsFasterThanCW,LeGall:2014:PTF:2608628.2608664} are respectively 
$(2q+1)$-multicompressible and $(2q+3)$-multicompressible. 
\end{remark}

\begin{remark}
Let $\bfa = \bfb= \bfc = \bfn^2$ and consider $M_{\langle {\bf n}\rangle}\in A 
\otimes B \otimes C$. Then $M_{\langle {\bf n}\rangle}\in A \otimes B \otimes C$ 
is $3\lfloor {\bf n}^2/2\rfloor$-multicompressible. The proof is similar to that 
of Proposition \ref{prop: maxcompr implies 2m-1 multicompr}. After a change of 
basis the flattening map $M_{\langle {\bf n}\rangle}: A^* \to B \otimes C$ can 
be written as a $(\bfn \times \bfn)$-block diagonal matrix of linear forms on 
$A$, whose diagonal blocks are all equal to the matrix $(\alpha^i_j)$, see 
\cite[Exercise 2.1.7.4]{MR3729273}. In this form, it is easy to see that 
$\Mamu{\bfn}$ is $(\bfn^2 , \bfb' , \bfc')$-compressible for every 
$(\bfb',\bfc')$ with $\bfb'+ \bfc' = \bfn^2$.

At this point, consider $\bfa', \bfb' , \bfc'$ with $\bfa'+ \bfb'+\bfc' = 
3\lfloor {\bf n}^2/2\rfloor$. Notice that $(\bfa' + \bfb')+(\bfb' + 
\bfc')+(\bfa'+\bfc') \leq 3{\bf n}^2$, so at least one among $(\bfa' + \bfb'), 
(\bfb' + \bfc'),(\bfa'+\bfc')$ is bounded from above by $\bfn^2$. Suppose $\bfb' 
+ \bfc' \leq \bfn^2$. From the argument above $\Mamu{\bfn}$ is $(\bfn^2, 
\bfb',\bfc')$-compressible and therefore $(\bfa',\bfb',\bfc')$-compressible.
\end{remark}

Finally,    when $\bfa = \bfb = \bfc = 
m$, every $T \in A \otimes B \otimes C$ with border rank $r$ is $(3m - 
r)$-multicompressible \cite{2016arXiv160807486L}. For instance, the tensor $T_{std,m}$ defined in the proof 
of Proposition \ref{prop: secants and tight} is $(2m-1)$-multicompressible.

\subsection{Combinatorial geometry of tight sets and compressibility}
This subsection discusses a  combinatorial approach towards proving compressibility of tight tensors.
Theorem  \ref{compressthm} and Proposition \ref{prop: multicompressible tight}
may be recovered from Proposition \ref{ctightprop} below.

The three functions $\tau_A,\tau_B,\tau_C : [m] \to \bbZ$ define a line 
arrangement in $\bbR^2$ as follows. Consider in $\bbR^3$ (with coordinates 
$(x,y,z)$) the following union of planes: 

\begin{equation}\label{eqn: plane arrangement} 
\hat{\calA} =  \bigsqcup_{i =0}^{m-1} \{ x = \tau_A(i)\} \cup \bigsqcup 
_{j=0}^{m-1} \{ y = \tau_B(j)\} \cup \bigsqcup_{k =0}^{m-1} \{ z = \tau_C(k)\}.
\end{equation}
Let $\calA$ be the intersection of $\hat{\calA}$ with the plane $\Pi = \{ x+y+z 
= 0\}\subset \mathbb R^3$. The set $\calA$ is an arrangement of three families 
of parallel lines, each consisting of $m$ lines: with respect to coordinates 
$x,y$ in $\Pi$, the three families of lines are $\calA = \bigsqcup_{i =0}^{m-1} 
\{ x = \tau_A(i)\}\cup \bigsqcup _{j=0}^{m-1} \{ y = \tau_B(j)\} \cup 
\bigsqcup_{k =0}^{m-1} \{ x+y = -\tau_C(k)\}$; we say that the 
$x$-\emph{direction} of $\calA$ is the union of the lines with constant $x$, the 
$y$-direction is the union of lines with constant $y$ and the 
$z$-\emph{direction} is the union of lines with slope $-1$. A subset of lines 
$\calA' \subseteq \calA$ is called a sub-arrangement if it contains at least one 
line in each direction.

The set $\{ p \in \Pi : p \text{ belongs to exactly two lines in $\calA$}\}$ is 
called the \emph{set of double intersection points} of $\calA$. The set 
$\frakJ(\calA) := \{ p \in \Pi : p \text{ belongs to three lines in $\calA$}\}$ 
is the set of \emph{joints} in $\calA$.

\begin{figure}[!htb]
\begin{center}
\begin{tikzpicture}
 \draw[very thick,blue] (-1,0) -- (8,0);
 \draw[very thick,blue] (-1,.5) -- (8,.5);
 \draw[very thick,blue] (-1,2) -- (8,2);
 \draw[very thick,blue] (-1,3) -- (8,3);
 \draw[very thick,blue] (-1,4.5) -- (8,4.5);
 \draw[very thick,blue] (-1,6) -- (8,6);

 \draw[very thick,red] (0,-1) -- (0,7);
 \draw[very thick,red] (1,-1) -- (1,7);
 \draw[very thick,red] (3,-1) -- (3,7);
 \draw[very thick,red] (4,-1) -- (4,7);
 \draw[very thick,red] (6.5,-1) -- (6.5,7);

 \draw[very thick,green] (-1,3) -- (3,-1);
 \draw[very thick,green] (-1,4) -- (4,-1);
 \draw[very thick,green] (-1,5.5) -- (5.5,-1);
 \draw[very thick,green] (4,7) -- (8,3);
 \draw[very thick,green] (0,7) -- (8,-1);
 
\node at (0,2) [circle,fill,inner sep=2pt]{};
\node at (0,3) [circle,fill,inner sep=2pt]{};
\node at (0,4.5) [circle,fill,inner sep=2pt]{};
\node at (1,2) [circle,fill,inner sep=2pt]{};
\node at (1,6) [circle,fill,inner sep=2pt]{};
\node at (3,0) [circle,fill,inner sep=2pt]{};
\node at (4,0.5) [circle,fill,inner sep=2pt]{};
\node at (4,3) [circle,fill,inner sep=2pt]{};
\node at (6.5,.5) [circle,fill,inner sep=2pt]{};
\node at (6.5,4.5) [circle,fill,inner sep=2pt]{};
\end{tikzpicture}
\caption{An arrangement of lines on the plane $\Pi$: the red lines are in the  $x$-direction, the blue lines in the $y$-direction and the green lines in the  $z$-direction. The joints are marked with black dots.}\label{figure: arrangement}
\end{center}
\end{figure}
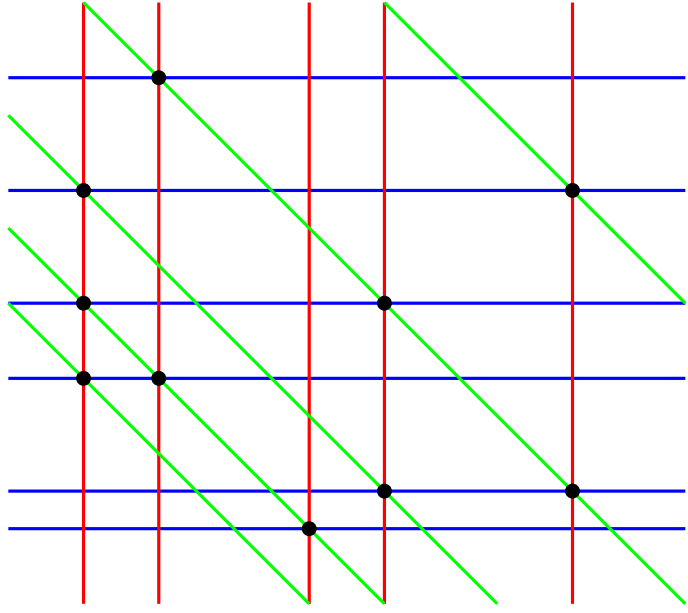

\begin{lemma}\label{Supportandjoints}
Let $T$ be a tight tensor in a tight basis and let $\calA$ be the corresponding 
arrangement of lines. Then $(\tau_A,\tau_B,\tau_C) : [m]^{\times 3} \to \bbR^3$ 
maps $\supp(T)$ bijectively to a subset of $\frakJ(\mathcal A)$. In particular, 
if $\calA$ is an arrangement with $\mathfrak{J}(A)=\emptyset$ then $T= 0$.
\end{lemma}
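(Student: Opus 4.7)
The plan is to unpack the definitions and verify directly that any element of $\supp(T)$ produces a triple intersection of lines in $\calA$.

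First, recall that since $T$ is tight in the given basis, the functions $\tau_A,\tau_B,\tau_C$ are injective and satisfy $\tau_A(i) + \tau_B(j) + \tau_C(k) = 0$ for every $(i,j,k) \in \supp(T)$, by Definition \ref{defin: tight oblique and free}. This is precisely the statement that the point $P_{ijk} := (\tau_A(i), \tau_B(j), \tau_C(k)) \in \bbR^3$ lies on the plane $\Pi = \{x+y+z=0\}$, so $P_{ijk}$ makes sense as a point of $\Pi$.

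Next, I would observe that by construction $P_{ijk}$ lies on each of the three coordinate planes $\{x=\tau_A(i)\}$, $\{y=\tau_B(j)\}$, $\{z=\tau_C(k)\}$ appearing in the arrangement $\hat{\calA}$ of \eqref{eqn: plane arrangement}. Intersecting each of these planes with $\Pi$ yields a line of $\calA$, one in each of the $x$-, $y$-, $z$-directions. Thus $P_{ijk}$ lies simultaneously on three lines of $\calA$ coming from distinct directions, and in particular belongs to $\frakJ(\calA)$. Injectivity of the assignment $(i,j,k)\mapsto P_{ijk}$ is immediate from the injectivity of $\tau_A,\tau_B,\tau_C$: the three coordinates of $P_{ijk}$ recover $i,j,k$ uniquely.

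Finally, the concluding statement follows at once: if $\frakJ(\calA)=\emptyset$, then the injection $\supp(T) \hookrightarrow \frakJ(\calA)$ forces $\supp(T)=\emptyset$, and hence $T = 0$. The argument is essentially a rewriting of the definitions, so I do not foresee a genuine obstacle; the only subtle point worth emphasizing is that lines in $\calA$ coming from different directions of the plane arrangement are always distinct (two lines in the same direction are parallel and never coincide because $\tau_A,\tau_B,\tau_C$ are injective), so three incidences indeed produce a joint rather than a spurious double intersection.
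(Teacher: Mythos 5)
Your proof is correct and follows essentially the same route as the paper: unpack the defining condition $\tau_A(i)+\tau_B(j)+\tau_C(k)=0$ to place $(\tau_A(i),\tau_B(j),\tau_C(k))$ on $\Pi$, observe that this point lies on one line from each of the three directions of $\calA$, and conclude it is a joint. You are slightly more explicit than the paper in spelling out the injectivity of $(i,j,k)\mapsto P_{ijk}$ (which the paper leaves implicit despite the word ``bijectively'' in the statement) and in noting that the three incident lines are genuinely distinct, which is a worthwhile addition even though it is immediate from the fact that lines in different directions of $\calA$ have different slopes.
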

\begin{proof}
If $(i,j,k) \in \supp(T)$ then $\tau_A(i)+\tau_B(j)+\tau_C(k) = 0$, so 
$(\tau_A(i), \tau_B(j) , \tau_C(k)) \in \Pi$ is a point of $\calA$ lying on 
three lines. In particular $(\tau_A(i), \tau_B(j) , \tau_C(k)) \in 
\frakJ(\calA)$.
\end{proof}

If $T$ is a tight tensor in a tight basis and $\calA$ is the corresponding line 
arrangement with $\supp(T) \subseteq \frakJ(\calA)$, we say that $T$ is 
\emph{supported} on $\calA$. Properties of the support of a tight tensor in a 
tight basis can be translated into geometric and combinatorial properties of 
$\calA$. For instance, compressibility in given coordinates can be studied 
combinatorially as follows.

\begin{proposition}\label{ctightprop}
Let $T$ be a tight tensor and let $\calA$ be the corresponding line arrangement. 
If there exists a sub-arrangement $\calA'$ of $\calA$ consisting of $\bfa'$ 
lines in the $x$ direction, $\bfb'$ lines in the $y$ direction, and $\bfc'$ 
lines in the $z$ direction with $\mathfrak{J}(\calA')=\emptyset$, then $T$ is 
$(\bfa' , \bfb' , \bfc')$-compressible. 
\end{proposition}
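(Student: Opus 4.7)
The plan is to translate the combinatorial hypothesis on $\calA'$ directly into a vanishing condition on a restriction of $T$, using Lemma \ref{Supportandjoints} as the essential bridge.

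First I would unpack what a sub-arrangement $\calA'$ encodes. The $\bfa'$ lines of $\calA'$ in the $x$-direction are a subset of $\{x = \tau_A(i)\}_{i \in [m]}$, so they single out a subset $I \subseteq [m]$ with $|I| = \bfa'$; likewise $\calA'$ determines $J \subseteq [m]$ with $|J| = \bfb'$ from the $y$-direction lines and $K \subseteq [m]$ with $|K| = \bfc'$ from the $z$-direction lines. Let $\{\alpha^i\}, \{\beta^j\}, \{\gamma^k\}$ be the bases of $A^*, B^*, C^*$ dual to the tight basis $\{a_i\}, \{b_j\}, \{c_k\}$, and set
\[
A' = \langle \alpha^i : i \in I \rangle, \qquad B' = \langle \beta^j : j \in J \rangle, \qquad C' = \langle \gamma^k : k \in K \rangle,
\]
so that $\dim A' = \bfa'$, $\dim B' = \bfb'$, $\dim C' = \bfc'$.

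Next I would compute $T|_{A' \otimes B' \otimes C'}$ explicitly. Writing $T = \sum T^{ijk}\, a_i \otimes b_j \otimes c_k$, the restriction is
\[
T|_{A' \otimes B' \otimes C'} = \sum_{(i,j,k) \in \supp(T) \cap (I \times J \times K)} T^{ijk},
\]
which is a scalar expression coming from pairing basis with dual basis. The claim reduces to showing $\supp(T) \cap (I \times J \times K) = \emptyset$.

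To establish this, I would argue by contradiction. Suppose $(i,j,k) \in \supp(T)$ with $i \in I$, $j \in J$, $k \in K$. By Lemma \ref{Supportandjoints}, the point $(\tau_A(i), \tau_B(j), \tau_C(k))$ lies on the three lines $\{x = \tau_A(i)\}$, $\{y = \tau_B(j)\}$, $\{x+y = -\tau_C(k)\}$ of $\calA$, and in particular is a joint of $\calA$. But by the construction of $I, J, K$, all three of these lines already belong to $\calA'$, so this point is also a joint of $\calA'$, contradicting the hypothesis $\mathfrak{J}(\calA') = \emptyset$. Hence no such $(i,j,k)$ exists, $T|_{A' \otimes B' \otimes C'} = 0$, and $T$ is $(\bfa', \bfb', \bfc')$-compressible.

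There is no real obstacle here beyond correctly matching the combinatorics of lines in $\calA$ to the index subsets $I, J, K$; the substance is entirely contained in Lemma \ref{Supportandjoints}. The only subtlety worth explicitly noting is that a sub-arrangement is required to contain at least one line in each direction, so that $I, J, K$ are all non-empty and the spaces $A', B', C'$ are genuine subspaces making compressibility meaningful.
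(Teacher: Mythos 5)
Your proof is correct and follows essentially the same approach as the paper: define $A', B', C'$ as spans of the dual basis vectors indexed by the lines in $\calA'$, then invoke Lemma \ref{Supportandjoints} to conclude from $\mathfrak{J}(\calA') = \emptyset$ that the restricted support is empty. The only minor imprecision is that $T|_{A' \otimes B' \otimes C'}$ is a trilinear form on $A' \times B' \times C'$, not a single scalar sum as your displayed formula suggests, but your immediately following reduction to $\supp(T) \cap (I \times J \times K) = \emptyset$ is the correct statement and the argument goes through.
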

\begin{proof}
After the identification of $A,B,C$ with their duals determined by the choice of 
bases, let $A' \subseteq A^*$ be the subspace spanned by the basis elements $\{ 
\alpha^i : \{x = \tau_A(i)\} \in \calA'\}$ and similarly $B'$ and $C'$. Then $T' 
= T|_{A' \otimes B' \otimes C'}$ is tight, and the corresponding arrangement is 
$\calA'$. Since $\frakJ(\calA') = \emptyset$, we conclude by Lemma 
\ref{Supportandjoints}.
\end{proof}

\section{Propagation of symmetries}\label{section: propagation of symmetries}

 Recall that $\Phi : GL(A) \times GL(B) \times GL(C) \to GL(A \otimes B \otimes 
C)$ defines the natural action of $GL(A) \times GL(B) \times GL(C)$ on $A 
\otimes B \otimes C$ and $G$ is the image of $\Phi$ in $GL(A \otimes B \otimes 
C)$. Denote by $G_T$ the stabilizer of a tensor $T$ in $G$ and by $\frakg_T$ the 
Lie algebra of $G_T$, namely the annihilator of $T$ under the Lie algebra action 
of $\frakg = \frakgl(A) \oplus \frakgl(B) \oplus \frakgl(C) / \frakz_{A,B,C}$. 
 
 We have the following result on propagation of symmetries.

\begin{theorem}\label{thm: propagation of symmetries}
 Let $T \in A_1 \otimes B_1 \otimes C_1$ and $S \in A_2 \otimes B_2 \otimes C_2$ 
be concise tensors. Then
 \begin{enumerate}[(i)]
\item as subalgebras of $\left( \frakgl(A_1 \oplus A_2) \oplus \frakgl(B_1 
\oplus B_2) \oplus \frakgl(C_1 \oplus C_2) \right) / \frakz_{A_1\oplus A_2,B_1 
\oplus B_2, C_1\oplus C_2}$, 
\[
\frakg_{T \oplus S} = \frakg_{T} \oplus \frakg_{S};
\]
\item as subalgebras of $\left( \frakgl(A_1 \otimes A_2) \oplus \frakgl(B_1 
\otimes B_2) \oplus \frakgl(C_1 \otimes C_2) \right) / \frakz_{A_1 \otimes A_2, 
B_1 \otimes B_2 , C_1 \otimes C_2}$,
\[
\frakg_{T \boxtimes S} \supseteq \frakg_{T} \otimes \Id_{A_2 \otimes B_2 \otimes 
C_2} + \Id_{A_1 \otimes B_1 \otimes C_1} \otimes \frakg_{S};
\]
\item if $\frakg_{T} = 0$ and $\frakg_{S} = 0$ then $\frakg_{T \boxtimes S} = 
0$.
 \end{enumerate}
\end{theorem}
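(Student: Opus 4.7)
I would write any annihilator $(X,Y,Z)$ of $T \oplus S$ in block form $X = (X_{ij})$ and analogously $Y,Z$ with respect to $A_1\oplus A_2$, $B_1\oplus B_2$, $C_1\oplus C_2$. Expanding $(X,Y,Z).(T\oplus S)$ and isolating the component in $A_2 \otimes B_1 \otimes C_1$ yields $X_{21}.T = 0$; expressing $T = \sum_i a_i \otimes M_i$ with $\{M_i\}$ linearly independent in $B_1 \otimes C_1$ (by conciseness of $T$) forces $X_{21}=0$, and the five analogous cross-components treat the remaining off-diagonal blocks, using conciseness of $T$ for the $*_{21}$ blocks and of $S$ for the $*_{12}$ blocks. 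The resulting block-diagonal $(X,Y,Z)$ separates into $(X_{11}, Y_{11}, Z_{11}) \in \frakg_T$ and $(X_{22}, Y_{22}, Z_{22}) \in \frakg_S$.

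\textbf{Part (ii).} A one-line verification: for $(P,R,L) \in \frakg_T$, $(P\otimes \Id_{A_2}, R \otimes \Id_{B_2}, L \otimes \Id_{C_2}).(T \boxtimes S) = \bigl((P,R,L).T\bigr)\boxtimes S = 0$, and symmetrically for $\frakg_S$, giving the inclusion.

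\textbf{Part (iii).} This is the substantive part. Using $\frakgl(A_1 \otimes A_2) = \frakgl(A_1)\otimes\frakgl(A_2)$ as vector spaces, write $U = \sum_\alpha P_\alpha \otimes Q_\alpha$, $V = \sum_\beta R_\beta \otimes S_\beta$, $W = \sum_\gamma L_\gamma \otimes M_\gamma$ for an arbitrary $(U,V,W) \in \frakg_{T\boxtimes S}$. Under the reordering $(A_1\otimes A_2)\otimes (B_1\otimes B_2)\otimes (C_1\otimes C_2)\cong E_1 \otimes E_2$, with $E_i := A_i\otimes B_i\otimes C_i$, the tensor $T\boxtimes S$ corresponds to $T \otimes S$ and the vanishing condition becomes
\[
\textstyle \sum_\alpha (P_\alpha.T) \otimes (Q_\alpha.S) + \sum_\beta (R_\beta.T) \otimes (S_\beta.S) + \sum_\gamma (L_\gamma.T) \otimes (M_\gamma.S) = 0 \ \text{ in } E_1 \otimes E_2.
\]
For any $\phi \in E_2^*$, contracting with $\Id_{E_1}\otimes \phi$ produces $(\tilde P_\phi, \tilde R_\phi, \tilde L_\phi).T = 0$, where $\tilde P_\phi = \sum_\alpha \phi(Q_\alpha.S)\, P_\alpha$ and analogously for $\tilde R_\phi, \tilde L_\phi$. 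Since $\frakg_T = 0$, this triple must lie in $\frakz_{A_1, B_1, C_1}$, so $\tilde P_\phi = \lambda(\phi)\,\Id_{A_1}$ with $\lambda$ linear in $\phi$. Conciseness of $S$ makes the map $\pi \colon \frakgl(A_2)\to E_2$, $Q \mapsto Q.S$, injective (via the flattening identity $(Q.S)_A = S_A \circ Q^*$ together with injectivity of $S_A$), and this injectivity lifts the ``scalar on $A_1$'' conclusion to $U = \Id_{A_1}\otimes Q_U$ for some $Q_U \in \frakgl(A_2)$; analogously $V = \Id_{B_1}\otimes R_V$ and $W = \Id_{C_1}\otimes L_W$. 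Plugging these back in gives $(U,V,W).(T \boxtimes S) = T \boxtimes \bigl[(Q_U, R_V, L_W).S\bigr]$; nonvanishing of $T$ forces $(Q_U, R_V, L_W).S = 0$, and $\frakg_S = 0$ places the triple in $\frakz_{A_2, B_2, C_2}$. Hence $U$, $V$, $W$ are scalar multiples of the respective identities with scalars summing to zero, so $(U,V,W) \in \frakz_{A_1 \otimes A_2, B_1 \otimes B_2, C_1 \otimes C_2}$ and vanishes in the quotient.

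\textbf{Main obstacle.} The delicate step is the lift in (iii): passing from ``the image of $U$ in $\frakgl(A_1)\otimes E_2$ lies in $\bbC\Id_{A_1} \otimes E_2$'' to ``$U \in \bbC\Id_{A_1} \otimes \frakgl(A_2)$''. Conciseness of $S$ is precisely what powers this step, via the injectivity of $Q \mapsto Q.S$; everything else in the argument is organized bookkeeping.
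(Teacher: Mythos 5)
Parts (i) and (ii) of your proof follow the paper's argument essentially verbatim: block decomposition plus conciseness for the direct sum, and the Leibniz rule for the Kronecker product inclusion. These are correct.

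For (iii), both you and the paper build on the same core observation — contracting one factor of $T \boxtimes S$ turns the vanishing condition into an annihilator constraint on the remaining tensor — but the two arguments take genuinely different routes to the conclusion. The paper works entirely in indices: it packages the condition (for fixed $i_1,j_1,k_1$) into a triple $L(i_1j_1k_1)$ which must annihilate $S$, uses $\frakg_S = 0$ to force $L(i_1j_1k_1)$ to be scalar, deduces $u^{i_1i_2}_{i_1'i_2'}T^{i_1'j_1k_1}=0$ for $i_2\neq i_2'$ and kills it by conciseness of $T$, repeats the argument with the roles of $T$ and $S$ exchanged to obtain diagonality of $U$ (and $V,W$), and then concludes the diagonal entries coincide via a generic-coefficient step. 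You instead contract the $E_2$-factor against $\phi \in E_2^*$, use $\frakg_T = 0$ to force the contracted triple into $\frakz_{A_1,B_1,C_1}$, and lift the resulting ``scalar on $A_1$'' conclusion to $U = \Id_{A_1}\otimes Q_U$ via injectivity of $Q\mapsto Q.S$ (precisely conciseness of $S$); then a single back-substitution gives $(Q_U,R_V,L_W).S=0$, and $\frakg_S=0$ finishes. Your route has two advantages: it avoids the $T \leftrightarrow S$ symmetry step entirely, establishing the pure-tensor form of $U$, $V$, $W$ in one pass; and it replaces the paper's final ``choose a basis so that all coefficients are nonzero'' step (which is slightly delicate, since the diagonality was proved in a possibly nongeneric basis) with a direct substitution into the original equation. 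The lifting step you flag as the key obstacle is correctly identified and correctly handled: since $\pi$ is injective, $(\Id \otimes \pi)^{-1}(\bbC\Id_{A_1}\otimes E_2) = \bbC\Id_{A_1}\otimes \frakgl(A_2)$. Your proof is correct and, in the final stretch of (iii), arguably a cleaner presentation than the paper's.
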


The containment of (ii) in Theorem \ref{thm: propagation of symmetries} can be strict, for instance in the case of the matrix multiplication tensor. Additional examples are provided in \cite{CGLVkron}. We propose the following problem, which addresses the general study of propagation of non-genericity properties under Kronecker powers, in the spirit of Strassen's asymptotic rank conjecture and its generalizations.

\begin{problem}
 Characterize tensors $T \in A \otimes B \otimes C$ such that $\frakg_T \otimes 
\Id_{A \otimes B \otimes C} + \Id_{A \otimes B \otimes C} \otimes \frakg_T$ is 
strictly contained in $\frakg_{T^{\boxtimes 2}} \in A^{\otimes 2} \otimes 
B^{\otimes 2} \otimes C^{\otimes 2}$.
\end{problem}

\begin{proof}[Proof of Theorem. \ref{thm: propagation of symmetries}]
Throughout the proof, we use the summation convention for which repeated upper 
and lower indices are to be summed over. The range of the indices is omitted as 
it should be clear from the context.

\subsection*{Proof of  (i)}
Let $T \in A_1 \otimes B_1 \otimes C_1$ and $S \in A_2 \otimes B_2 \otimes C_2$. 
Fix bases of $A_1,B_1,C_1,A_2,B_2,C_2$ and write $T  = T ^{i_1j_1k_1} 
a_{i_1}^{(1)} \otimes b_{j_1}^{(1)} \otimes c_{k_1}^{(1)}$ and $S = 
S^{i_2j_2k_2} a_{i_2}^{(2)} \otimes b_{j_2}^{(2)} \otimes c_{k_2}^{(2)}$. Let $L 
= (U,V,W) \in \frakgl(A_1 \oplus A_2) \oplus \frakgl(B_1 \oplus B_2) \oplus 
\frakgl(C_1 \oplus C_2)$. We want to prove that if $L. (T \oplus S) = 0$, then 
for $\ell = 1,2$, there is $L_\ell \in \frakgl(A_\ell) \oplus \frakgl(B_\ell) 
\oplus \frakgl(C_\ell)$ such that $L = L_1 + L_2$ with $L_1. T = 0$ and $L_2.S = 
0$. Write $X = X_{11} + X_{12} + X_{21} + X_{22}$ where $X_{11} \in 
\Hom(A_1,A_1)$ and similarly for the other summands. Consider $X_{21} (T 
\oplus S) = X_{21} (T)$: this is an element of $A_2 \otimes B_1 \otimes C_1$. 
No other summand of $X$, nor $Y$ or $Z$ generate a nonzero component in this 
space. Therefore, $X_{21} (T) = 0$ and by conciseness we deduce $X_{21} = 0$. 
Similarly $X_{12} = 0$ so that 
$X = X_{11} + X_{22} \in \frakgl(A_1) \oplus \frakgl(A_2)$ and similarly for $Y$ 
and $Z$. For $\ell = 1,2$, let $L_\ell = (X_{\ell\ell} , Y_{\ell\ell} , 
Z_{\ell\ell})$. Then $L = L_1 + L_2$ and $L.(T \oplus S) = L_1 . T + L_2 .S$; 
notice $L_1.T \in A_1 \otimes B_1 \otimes C_1$ and $L_2. S \in A_2 \otimes B_2 
\otimes C_2$ are linearly independent, so if $L.(T\oplus S)=0$, we have $L_1 
\in \frakg_{T}$ and $L_2 \in \frakg_{S}$.  

\subsection*{Proof of  (ii)}
This is a straightforward consequence of the Leibniz rule. In general if 
$\frakg_1$ acts on a space $V_1$ and $\frakg_2$ acts on a space $V_2$, then the 
action of $\frakg_1 \oplus \frakg_2$ on $V_1 \otimes V_2$ is given by the 
Leibniz rule via $(L_1,L_2)\mapsto L_1 \otimes \Id_{V_2} + \Id_{V_1} \otimes 
L_2$. If $v_1 \in V_1$ is annihilated by $\frakg_1$, and $v_2 \in V_2$ is 
annihilated by $v_2$, then $\frakg_1 \oplus \frakg_2$ annihilates $v_1 \otimes 
v_2$ via the induced action.

\subsection*{Proof of  (iii)}
Fix bases in all spaces. Let $L = (U,V,W) \in \frakgl(A_1 \otimes A_2) \otimes 
\frakgl(B_1 \otimes B_2)  \otimes \frakgl(C_1 \otimes C_2)$ and write $U$ as an 
$\bfa_1\bfa_2\times \bfa_1\bfa_2$  matrix $u^{i_1' i_2'}_{i_1i_2}$, and 
similarly for $V$ and $W$. Our goal is to prove that if $L. (T_1 \boxtimes T_2)= 
0$, then $L \in \frakz_{A_1\otimes A_2, B_1 \otimes B_2, C_1 \otimes C_2}$.

Write $T_1= T^{ijk} a_i^{(1)} \otimes b_j^{(1)} \otimes c_k^{(1)}$ and $T_2 = 
S^{i'j'k'} a_{i'}^{(2)} \otimes b_{j'}^{(2)} \otimes c_{k'}^{(2)}$. The 
equations for the symmetry Lie algebra $\frakg_{T_1 \boxtimes T_2}$ is $L.(T_1 
\boxtimes T_2) = 0$; in coordinates, for every $i_1,i_2,j_1,j_2,k_1,k_2$, we 
have (using the summation convention)
\begin{equation}\label{killbox}
u^{i_1i_2}_{i_1'i_2'} T ^{i_1' j_1k_1} S^{i_2' j_2 k_2} + v^{j_1j_2}_{j_1'j_2'} 
T ^{i_1 j_1'k_1} S^{i_2 j_2' k_2} + w^{k_1k_2}_{k_1'k_2'} T ^{i_1 j_1k_1'} 
S^{i_2 j_2 k_2'}  = 0 
 \end{equation}
 Let $U(i_1j_1k_1)\in \Hom(A_2 , A_2)$ be the matrix whose $(i_2,i_2')$-th entry 
is $u^{i_1 i_2}_{i_1'i_2'}T^{i_1' j_1k_1}$ and similarly $V(i_1j_1k_1)$ and 
$W(i_1j_1k_1)$. Let $L(i_1j_1k_1) = (U(i_1j_1k_1),V(i_1j_1k_1),W(i_1j_1k_1))$. 
From \eqref{killbox}, we have $L(i_1j_1k_1). T_2 = 0$, namely $L(i_1j_1k_1) = 
\frakz_{A_2, B_2,C_2}$. Therefore, one has $U(i_1j_1k_1) = u(i_1j_1k_1)\Id_{A_2}, 
V(i_1j_1k_1) = v(i_1j_1k_1)\Id_{B_2},W(i_1j_1k_1) = w(i_1j_1k_1)\Id_{C_2}$, with 
$u(i_1j_1k_1) + v(i_1j_1k_1) + w(i_1j_1k_1) = 0$.
 
 In particular, if $i_2\neq i_2'$, we have $U(i_1j_1k_1)^{i_2}_{i_2'} = 0$ for 
every $i_1,j_1,k_1$, which by definition provides $u^{i_1i_2}_{i_1'i_2'} 
T^{i_1'j_1k_1} = 0$. This implies that the $\bfa_1 \times \bfa_1$ matrix 
$(u^{i_1i_2}_{i_1'i_2'}) _{i_2,i_2'}$ satisfies $((u^{i_1i_2}_{i_1'i_2'}) 
_{i_2,i_2'},0,0) \in \frakg_{T_1}$. By conciseness, this implies 
$(u^{i_1i_2}_{i_1'i_2'}) _{i_2,i_2'} = 0$, and therefore $u^{i_1 i_2}_{i_1'i_2'} 
= 0$ for every $i_1,i_1'$ and every $i_2 \neq i_2'$. By exchanging the role of 
the two tensors, we obtain that $u^{i_1 i_2}_{i_1'i_2'} = 0$ for every $i_1 \neq 
i_1'$ and every $i_2 ,i_2'$. We deduce that $U$ is diagonal. Similar argument 
applies to $V$ and $W$.
 
Then, for each fixed $i_1,i_2,j_1,j_2,k_1,k_2$, \eqref{killbox} reduces 
to (with no summation) $T^{i_1j_1k_1}S^{i_2j_2k_2} 
(u^{i_1i_2}_{i_2i_2}+v^{j_1j_2}_{j_1j_2}+w^{k_1k_2}_{k_1k_2})=0$. Since our 
choice of bases is arbitrary, we may assume that $T^{i_1j_1k_1} \neq 0 \neq 
S^{i_2j_2k_2}$. Then taking  different values of $k_1,k_2$ and fixing 
$i_1,i_2,j_1,j_2$, we see all the $w^{k_1k_2}_{k_1k_2}$ must  be equal and 
similarly for $U$ and $V$. This shows that $U = \lambda \Id_{A_1 \otimes A_2}$, 
$V = \mu \Id_{B_1 \otimes B_2}$ and $W = \Id_{C_1 \otimes C_2}$. By evaluating 
\eqref{killbox} one last time, we see $\lambda = \mu = \nu$ that is $L \in 
\frakz_{A_1\otimes A_2, B_1 \otimes B_2, C_1 \otimes C_2}$.
 \end{proof}
 
\bibliographystyle{amsalpha}
\bibliography{bibTight}

\end{document}